\newcommand{\mres}{\mathbin{\vrule height 1.6ex depth 0pt width 0.13ex\vrule
height 0.13ex depth 0pt width 0.8ex}}
\theoremstyle{plain}
\newtheorem{thm}{Theorem}[section]
\newtheorem{prop}[thm]{Proposition}
\newtheorem{cor}[thm]{Corollary}
\newtheorem{lem}[thm]{Lemma}
\newtheorem*{thm*}{Theorem}
\newtheorem*{prop*}{Proposition}
\newtheorem*{cor*}{Corollary}
\newtheorem*{lem*}{Lemma}
\theoremstyle{definition}
\newtheorem{defi}[thm]{Definition}
\newtheorem*{defi*}{Definition}
\newtheorem*{exa*}{Example}
\theoremstyle{remark}
\newtheorem{rmk}[thm]{Remark}
\newtheorem*{rmk*}{Remark}
\newcommand{\R}{\mathbf{R}}
\newcommand{\HH}{\mathcal{H}}
\newcommand{\LL}{\mathcal{L}}
\newcommand{\II}{\mathcal{I}}
\newcommand{\FF}{\mathcal{F}}
\newcommand{\spt}{\mathrm{spt}}
\newcommand{\BV}{\mathrm{BV}}
\newcommand{\SBV}{\mathrm{SBV}}
\newcommand{\loc}{\mathrm{loc}}
\newcommand{\dm}{\,\mathrm{d}}
\title{Higher integrability of the gradient for the Thermal Insulation problem}
\author{C. Labourie, E. Milakis}
\date{}
\begin{document}

\maketitle

\begin{abstract}
    We prove the higher integrability of the gradient for local minimizers of
    the thermal insulation problem, an analogue of De Giorgi's conjecture for
    the Mumford-Shah functional. We deduce that the singular part of the free
    boundary has Hausdorff dimension strictly less than $n-1$.
\end{abstract}

\textbf{AMS Subject Classifications}: 35R35, 35J20, 49N60, 49Q20.

\textbf{Keywords}: Thermal Insulation, Higher Integrability, Free Boundary Problems.

\tableofcontents

\section{Introduction}

We fix a bounded connected set $\Omega \subset \R^n$.
The thermal insulation problem consists in minimizing the functional
\begin{equation}
    \II(A,u) := \int_{A} \! \abs{\nabla u}^2 \dm\LL^n + \int_{\partial A} \! \abs{u^*}^2 \dm\HH^{n-1} + \LL^n(A)
\end{equation}
among all pairs $(A,u)$ where $A \subset \R^n$ is an admissible domain and $u \in W^{1,2}(A)$ is a function such that $u = 1$ for $\LL^n$-a.e.\ on $\Omega$.
Here, $u^*$ is the trace of $u$ on $\partial A$.

The problem has been studied by Caffarelli--Kriventsov in~\cite{CK},~\cite{Kriventsov} and Bucur--Giacomini--Luckhaus in~\cite{BL},~\cite{BG}.
The authors transpose the problem to a slightly different setting in order to apply the direct method of the calculus of variation.
The authors represent a pair $(A,u)$ by the function $u \mathbf{1}_A$ and relax the functional on $\SBV$.
The new problem consists in minimizing the functional
\begin{equation}\label{eq_problem}
    \FF(u) := \int_{\R^n} \! \abs{\nabla u}^2 \dm\LL^n + \int_{J_u} \! (\overline{u}^2 + \underline{u}^2) \dm\HH^{n-1} + \LL^n(\set{u > 0})
\end{equation}
among all functions $u \in \SBV(\R^n)$ such that $u=1$ $\LL^n$-a.e.\ on $\Omega$.
The definition of $J_u$ and $\overline{u}$, $\underline{u}$ are given in Appendix~\ref{appendix_bv}.
This new setting is more suited to a direct minimization since it enjoys the compactness and closure properties of $\SBV$.
In parenthesis, there always exist functions $u \in \SBV(\R^n)$ such that $u=1$ $\LL^n$-a.e.\ on $\Omega$ and $\FF(u) < \infty$.
For example, $u := \mathbf{1}_B$ where $B$ is an open ball containing $\Omega$.
In~\cite[Theorem 4.2]{CK}, Caffarelli and Kriventsov prove that the $\SBV$ problem has a solution $u$.
A key point property of solutions is that there exists $0 < \delta < 1$ (depending on $n$, $\Omega$) such that $\spt(u) \subset B(0,\delta^{-1})$ and
\begin{equation}
    u \in \set{0} \cup [\delta,1] \quad \text{$\LL^n$-a.e.\ on $\R^n$}.
\end{equation}
This property has also been proved in~\cite{BL}. On another note, some minimality criteria have been proved by calibrations in~\cite{LM}.

The main goal of the present article is to prove that there exists $p > 1$ such that $\abs{\nabla u}^2 \in L^p_{\loc}(\R^n \setminus \overline{\Omega})$ (Theorem~\ref{thm_integrability}).
A parallel property was conjectured by De Giorgi for minimizers of the Mumford-Shah functional and solved by De Lellis and Focardi in the planar case (\cite{DLF}) and then De Philippis and Figalli (\cite{DPF}) in the general case.
Our proof is inspired by the technique of~\cite{DPF} and it relies on three key properties: the Ahlfors-regularity of the free boundary, the uniform rectifiability of the free boundary and the $\varepsilon$-regularity theorem.
In particular, this implies a porosity property which means that the singular part $\Sigma$ of the free boundary has many holes in a quantified way.
In contrast to the Mumford-Shah situation, the $\varepsilon$-regularity theorem describes a regular part of the boundary as a pair of graphs rather than just one graph.
The minimizers satisfies an elliptic equation with a Robin boundary condition at the boundary rather than a Neumann boundary condition.
We present the technique of~\cite{DPF} in a different way by singling out a higher integrability lemma and a covering lemma and by removing the need of~\cite[Lemma 3.2]{DPF} (the existence of good radii).
Once we establish the higher integrability of the gradient, we are also able to conclude that the dimension of $\Sigma$ is strictly less than $n-1$ (Theorem~\ref{thm_dimension}).
The link between the higher integrability of the gradient and the dimension of the singular part has been first observed for the Mumford-Shah functional by Ambrosio, Fusco, Hutchinson in~\cite{AFH}.
An open question of Caffarelli--Kriventsov hints that for all minimizers in the planar case, $\Sigma$ is empty and the optimal exponent is $p=\infty$ (see also Remark~\ref{rmk_dimension}).

\textbf{Acknowledgement}. We would like to thank Guido De Philippis for his helpful correspondence concerning estimates for elliptic equations.

\section{Generalities about minimizers}

\subsection{Definition}

\textbf{Notations}. 
Our ambient space is an open set $X$ of $\R^n$.
One can think of $X$ as $\R^n \setminus \overline{\Omega}$.
For $x \in \R^n$ and $r > 0$, $B(x,r)$ is the \emph{open ball} centered in $x$ and of radius $r$. If there is no ambiguity, it is simply denoted by $B_r$.
Given an open ball $B := B(x,r)$ and a scalar $t > 0$, the notation $t B$ means $B(x,t r)$.
Given a set $A \subset \R^n$, the \emph{indicator function} of $A$ is denoted by $\mathbf{1}_A$.
Given two sets $A, B \subset \R^n$, the notation $A \subset \subset B$ means that there exists a compact set $K \subset \R^n$ such that $A \subset K \subset B$.

Given $u \in \SBV_{\loc}(X)$, we denote by $K$ the support of the singular part of $Du$:
\begin{subequations}
    \begin{align}
        K   &:= \spt(\abs{\overline{u}-\underline{u}} \HH^{n-1} \mres J_u)\\
            &:= \spt(\HH^{n-1} \mres J_u).
    \end{align}
\end{subequations}
For $x \in K$ and $r > 0$ such that $\overline{B}(x,r) \subset X$, we define
\begin{subequations}
    \begin{align}
        \omega_2(x,r)       &:= r^{-(n-1)} \int_{B(x,r)} \! \abs{\nabla u}^2 \dm\LL^n,\\
        \beta_2(x,r)        &:= \left(r^{-(n+1)} \inf_V \int_{K \cap \overline{B}(x,r)} \! \dm(y,V)^2 \dm\HH^{n-1}(y)\right)^{\frac{1}{2}},
    \end{align}
\end{subequations}
where $V$ runs among $(n-1)$ planes $V \subset \R^n$ passing through $x$.
When there is an ambiguity, we will write $\beta_{K,2}$ instead of $\beta_2$.
We have gathered some definitions and results from the theory of $\BV$ functions in the introduction of Appendix~\ref{appendix_bv}.

For any open ball $B$ such that $\overline{B} \subset X$, we define a \emph{competitor of $u$ in $B$} as a function $v \in \SBV_{\loc}(X)$ such that $v = u$ $\LL^n$-a.e.\ on $X \setminus \overline{B}$.
We fix a constant $\delta \in \left]0,1\right[$ for all the paper.

\begin{defi}\label{defi_mini}
    We say that $u \in \SBV_{\loc}(X)$ is a \emph{local minimizer} if
    \begin{enumerate}
        \item
            for $\LL^n$-a.e.\ $x \in X$, we have $u \in \set{0} \cup [\delta,\delta^{-1}]$;

        \item
            for all open balls $B$ such that $\overline{B} \subset X$, for all competitors $v$ of $u$ in $B$,
            \begin{multline}
                \int_B \! \abs{\nabla u}^2 \dm\LL^n + \int_{J_u \cap \overline{B}} \! (\overline{u}^2 + \underline{u}^2) \dm\HH^{n-1} + \LL^n(\set{u > 0} \cap B)\\
                \leq \int_B \! \abs{\nabla v}^2 \dm\LL^n + \int_{J_v \cap \overline{B}} \! (\overline{v}^2 + \underline{v}^2) \dm\HH^{n-1} + \LL^n(\set{v > 0} \cap B).
            \end{multline}
    \end{enumerate}
\end{defi}

As a first consequence, we have that $\overline{u}, \underline{u} \in \set{0} \cup [\delta,\delta^{-1}]$ \emph{everywhere} in $X$.
In particular, $\overline{u} \geq \delta$ everywhere on $S_u$.
For all open balls $B$ such that $\overline{B} \subset X$, we have
\begin{equation}
    \int_B \! \abs{\nabla u}^2 \dm\LL^n + \int_{J_u \cap \overline{B}} \! (\overline{u}^2 + \underline{u}^2) \dm\HH^{n-1} < \infty.
\end{equation}
This shows that $\abs{\nabla u}^2 \in L^1_{\loc}(X)$ and that $S_u$ is $\HH^{n-1}$-locally finite in $X$.
In $X \setminus \overline{S_u}$, the function $u$ is $W_{\loc}^{1,2}$ and locally minimizes its Dirichlet energy.
Therefore, $u$ is harmonic (and thus continuous) in $X \setminus \overline{S_u}$.
We conclude that in each connected component of $X \setminus \overline{S_u}$, we have either $u > \delta$ everywhere or $u = 0$ everywhere.

\subsection{Properties}

The next results (Ahlfors-regularity, uniform rectifiability and $\varepsilon$-regularity theorem) also hold true for the almost-minimizers of \cite[Definition 2.1]{Kriventsov}.
We are going to cite~\cite[Corollary 3.3 and Theorem 5.1]{CK}.

\begin{prop}[Ahlfors-regularity]\label{prop_af}
    Let $u \in \SBV_{\loc}(X)$ be a local minimizer.
    There exists $0 < r_0 \leq 1$ and $C \geq 1$ (both depending on $n$, $\delta$) such that the following holds true.
    \begin{enumerate}
        \item
            For all $x \in X$, for all $0 \leq r \leq r_0$ such that $B(x,r) \subset X$,
            \begin{equation}\label{eq_af1}
                \int_{B(x,r)} \! \abs{\nabla u}^2 \dm \LL^n + \HH^{n-1}(K \cap B(x,r)) \leq C r^{n-1}.
            \end{equation}

        \item
            For all $x \in \overline{S_u}$, for all $0 \leq r \leq r_0$ such that $B(x,r) \subset X$,
            \begin{equation}\label{eq_af2}
                \HH^{n-1}(K \cap B(x,r)) \geq C^{-1} r^{n-1}.
            \end{equation}
    \end{enumerate}
\end{prop}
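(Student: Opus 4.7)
The two halves of the statement split cleanly: the upper bound~\eqref{eq_af1} is proved by one competitor comparison, whereas~\eqref{eq_af2} demands a contradiction argument.

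For (i), the plan is to compare $u$ against the competitor $v$ defined by $v := 0$ on $\overline{B(x,r)}$ and $v := u$ elsewhere. This $v$ is admissible in any ball slightly larger than $B(x,r)$; inside $B$ its Dirichlet and volume contributions vanish, and $J_v \cap \overline{B} \subset \partial B$ with the relevant trace bounded pointwise by $\delta^{-1}$ (using the pointwise bound $\overline{u}, \underline{u} \in \set{0} \cup [\delta,\delta^{-1}]$ recorded after Definition~\ref{defi_mini}). The minimality inequality therefore yields
\begin{equation*}
    \int_B \! \abs{\nabla u}^2 \dm\LL^n + \int_{J_u \cap \overline{B}} \! (\overline{u}^2 + \underline{u}^2) \dm\HH^{n-1} + \LL^n(\set{u>0} \cap B) \leq \delta^{-2} \HH^{n-1}(\partial B) = C_n \delta^{-2} r^{n-1}.
\end{equation*}
Since $\overline{u} \geq \delta$ on $S_u$, each unit of $\HH^{n-1}$-mass of $J_u$ contributes at least $\delta^2$ to the trace integral, so $\HH^{n-1}(J_u \cap \overline{B}) \leq C r^{n-1}$, and the definition of $K$ transfers the same bound to $\HH^{n-1}(K \cap B)$. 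The role of $r_0 \leq 1$ is only to absorb the $O(r^n)$ volume term into $r^{n-1}$ when needed.

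For (ii), I would argue by contradiction. Suppose that for some $x \in \overline{S_u}$ and some small $r$ one has $\HH^{n-1}(K \cap B(x,r)) < \eta r^{n-1}$ for a constant $\eta = \eta(n,\delta)$ to be chosen. The aim is to construct a competitor on $B(x,r)$ that eliminates the jump set inside a smaller concentric ball and strictly decreases $\FF$, contradicting $x \in \overline{S_u}$. A Fubini-type co-area argument on concentric spheres produces a \emph{good radius} $r' \in (r/2, r)$ on which $\HH^{n-2}(K \cap \partial B(x,r'))$ is small and the trace $u^*|_{\partial B(x,r')}$ is controlled in $L^2$. One then defines $v$ inside $B(x,r')$ either as the harmonic extension of this trace or as $0$, whichever is cheaper, and shows that the jump energy saved dominates the added Dirichlet and volume cost once $\eta$ is small enough.

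The main obstacle is certainly the lower bound. The argument must balance the three competing energy contributions against one another, control the harmonic replacement by a Poincaré-trace inequality on $\partial B(x,r')$, and decide between the two candidate competitors via a dichotomy on the mean of $u^*$ on that sphere. Rather than re-derive this balancing, I would invoke the version of the argument already carried out by Caffarelli--Kriventsov in~\cite[Corollary 3.3]{CK}, which is tailored precisely to this $\SBV$ thermal-insulation setting.
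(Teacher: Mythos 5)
The paper does not supply a proof of this Proposition; it is obtained by citing~\cite[Corollary 3.3 and Theorem 5.1]{CK}, as announced in the paragraph immediately preceding the statement. Your proposal adds a self-contained competitor argument for the upper bound (i) and, for the lower bound (ii), sketches the mechanism (good radius, harmonic replacement or zero extension, dichotomy) before deferring to the same reference, so the two routes coincide where they overlap.

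Your competitor argument for (i) is essentially correct, with one caveat. Taking $v := u \mathbf{1}_{X \setminus \overline{B}}$ gives $J_v \cap \overline{B} \subset \partial B$, and since $\underline{v} = 0$ there while $\overline{v} \leq \delta^{-1}$, the jump term on the right is at most $\delta^{-2}\HH^{n-1}(\partial B)$; combined with $\overline{u}^2 + \underline{u}^2 \geq \delta^2$ on $J_u$ this yields $\HH^{n-1}(J_u \cap \overline{B}) \leq n\omega_n \delta^{-4} r^{n-1}$ together with the Dirichlet bound. However, your final step --- that ``the definition of $K$ transfers the same bound to $\HH^{n-1}(K \cap B)$'' --- is not immediate. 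From $K = \spt(\HH^{n-1}\mres J_u)$ one only gets $\HH^{n-1}(J_u \setminus K) = 0$; to move an \emph{upper} bound from $J_u$ to $K$ you need $\HH^{n-1}(K \setminus J_u) = 0$, which is precisely the content of the Corollary that follows Proposition~\ref{prop_af} and is itself derived \emph{from} the lower bound~\eqref{eq_af2}. Since you in any case invoke~\cite[Corollary 3.3]{CK} for (ii), the clean logical order is: cite (ii), deduce $\HH^{n-1}(K\setminus J_u)=0$ as in the Corollary, then your competitor computation for $J_u$ gives~\eqref{eq_af1} for $K$. With that dependency made explicit, your proposal is complete and consistent with the paper.
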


\begin{cor}
    Let $u \in \SBV_{\loc}(X)$ be a local minimizer.
    \begin{enumerate}[label = (\roman*)]
        \item
            We have $K = \overline{S_u} = \overline{J_u}$ and $\HH^{n-1}(K \setminus J_u) = 0$.

        \item
            The set $A_u:= \set{\overline{u} > 0} \setminus K$ is open and $\partial A_u = K$.
    \end{enumerate}
\end{cor}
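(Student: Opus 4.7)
\emph{Part (i).} I would first establish the chain $K \subseteq \overline{J_u} \subseteq \overline{S_u} \subseteq K$, then handle the null-set claim. The first two inclusions are tautological: the support of $\HH^{n-1} \mres J_u$ lies in the closure of $J_u$, and $J_u \subseteq S_u$. For $\overline{S_u} \subseteq K$, take $x \in \overline{S_u}$ and $r \leq r_0$ with $B(x,r) \subset X$; (\ref{eq_af2}) gives $K \cap B(x,r) \neq \emptyset$, so any $y$ in this intersection is a support point of $\HH^{n-1} \mres J_u$, and picking $\rho$ small enough that $B(y,\rho) \subseteq B(x,r)$ yields $\HH^{n-1}(J_u \cap B(x,r)) \geq \HH^{n-1}(J_u \cap B(y,\rho)) > 0$, hence $x \in K$. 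For $\HH^{n-1}(K \setminus J_u) = 0$, I would decompose $K \setminus J_u \subseteq (\overline{S_u} \setminus S_u) \cup (S_u \setminus J_u)$; the standard $\SBV$ identity from Appendix~\ref{appendix_bv} handles the second piece. For the first, the density lower bound underlying (\ref{eq_af2})---which, by the competitor argument of~\cite{CK}, in fact produces $\HH^{n-1}(J_u \cap B(x,r)) \geq c\, r^{n-1}$ for every $x \in \overline{S_u}$---gives $\theta^{*,n-1}(\HH^{n-1} \mres J_u, x) \geq c' > 0$ on $\overline{S_u}$; a classical Vitali-type comparison then produces $\HH^{n-1}(A) \leq C\, \HH^{n-1}(J_u \cap A)$ for every Borel $A \subseteq \overline{S_u}$, and applying this to $A = \overline{S_u} \setminus S_u$ (disjoint from $J_u$) concludes.

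\emph{Part (ii).} For openness of $A_u$: the function $u$ is harmonic on $X \setminus K = X \setminus \overline{S_u}$, hence continuous there and equal to $\overline{u}$, and by the dichotomy recalled just after Definition~\ref{defi_mini} each connected component of $X \setminus K$ carries either $u \equiv 0$ or $u > \delta$ throughout. So $A_u$ is the union of the ``positive'' components of the open, locally connected set $X \setminus K$, hence open. For $\partial A_u = K$: any $x \in X \setminus K$ sits in one such component, which is open in $X$ and wholly inside either $A_u$ or its complement; this gives $\partial A_u \subseteq K$. Conversely, fix $x \in K = \overline{S_u}$ and pick $y \in S_u$ arbitrarily close to $x$; the bound $\overline{u}(y) \geq \delta$ and the definition of the approximate upper limit force $\LL^n(B(y,\rho) \cap \{u > \delta/2\}) > 0$ along a sequence $\rho \to 0$. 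Since $K$ is $\LL^n$-null (being $\HH^{n-1}$-$\sigma$-finite), these positive-measure sets lie inside $X \setminus K$, where $\{u > 0\}$ coincides with $A_u$; this produces $A_u$-points arbitrarily close to $x$, and combined with $x \notin A_u$ yields $x \in \partial A_u$.

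\emph{Main obstacle.} The delicate step is the null-set claim in (i): topologically one only obtains $K \setminus J_u \subseteq \overline{J_u} \setminus J_u$, and upgrading this inclusion to an $\HH^{n-1}$-null statement relies on the quantitative density lower bound on $J_u$ encoded by (\ref{eq_af2}), together with a standard Vitali covering comparison. The remaining steps are topological consequences of the harmonicity of $u$ off $K$ and the trace lower bound $\overline{u} \geq \delta$ on $S_u$.
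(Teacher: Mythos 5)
Your proposal is correct and follows essentially the same route as the paper: the inclusion chain $K \subseteq \overline{J_u} \subseteq \overline{S_u} \subseteq K$ via the density lower bound, a density-comparison theorem to kill $\HH^{n-1}(K \setminus J_u)$, and the component/continuity dichotomy off $K$ together with $\overline{u} \geq \delta$ on $S_u$ for part (ii). The only (immaterial) variation is in the null-set step, where you invoke the lower-density comparison $\HH^{n-1}(A) \leq C\,\HH^{n-1}(J_u \cap A)$ on $\overline{S_u}$, while the paper uses the dual statement that $\HH^{n-1}$-a.e.\ point outside the $\HH^{n-1}$-finite set $J_u$ has vanishing upper density, contradicting the lower bound; both are the same standard density theorem.
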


\begin{proof}
    It is straighforward by definition that $K \subset \overline{J_u} \subset \overline{S_u}$.
    On the other hand, property (\ref{eq_af2}) shows that $\overline{S_u} \subset K$.
    We justify that $\HH^{n-1}(K \setminus J_u) = 0$.
    The jump set $J_u$ is Borel and $\HH^{n-1}$ locally finite in $X$, so for $\HH^{n-1}$-a.e.\ $x \in X \setminus J_u$,
    \begin{equation}
        \lim_{r \to 0} \frac{\HH^{n-1}(J_u \cap B(x,r))}{r^{n-1}} = 0
    \end{equation}
    (see~\cite[Theorem $6.2$]{Mattila}).
    We draw our claim from the observation that this limit contradicts (\ref{eq_af2}).

    We study the set $A_u$.
    We recall that the function $\overline{u}$ is continuous in $X \setminus K$ (since it coincides with $u$ outside $S_u$) and $\overline{u} \in \set{0} \cup [\delta,1]$ \emph{everywhere} in $X \setminus K$.
    As a consequence, the sets
    \begin{align}
        A_u &:= \set{\overline{u} > 0} \setminus K,\\
        B_u &:= \set{\overline{u} = 0} \setminus K
    \end{align}
    are open subsets of $X \setminus K$ and thus of $X$.
    The space $X$ is the disjoint union
    \begin{equation}
        X = K \cup A_u \cup B_u,
    \end{equation}
    where $A_u$ and $B_u$ are open and $K$ is relatively closed, so $\overline{A_u} \subset A_u \cup K$.

    We show that $S_u \subset \overline{A_u}$.
    Let us suppose that there exists $x \in S_u$ and $r > 0$ such that $A_u \cap B(x,r) = \emptyset$.
    Then $B(x,r) \setminus K \subset \set{\overline{u} = 0}$ so we have $u=0$ $\LL^n$-a.e.\ on $B(x,r)$ and thus $x$ is a Lebesgue point of $u$ (a contradiction).
    We conclude that $S_u \subset \overline{A_u}$ and in turn $K \subset \overline{A_u}$ so $\overline{A_u} = A_u \cup K$.
\end{proof}

We are going to apply~\cite{David} to justify that $K$ is locally contained in a uniformly rectifiable set.
We underline that our local minimizers are \emph{not} quasiminimizers as in \cite[Definition 7.21]{David}.
In Appendix~\ref{appendix_david}, we have summarised the relevant results of~\cite{David} and how their proofs adapt to our case (Remark~\ref{rmk_david2}).

\begin{prop}[Uniform Rectifiability]\label{prop_ur}
    Let $u \in \SBV_{\loc}(X)$ be a local minimizer.
    There exists $0 < r_0 \leq 1$ (depending on $n$, $\delta$) such that the following holds true.
    For all $x \in K$ and $0 < r \leq r_0$ such that $B(x,r) \subset X$, there is a closed, Ahlfors-regular, uniformly rectifiable set $E$ of dimension $(n-1)$ such that $K \cap \tfrac{1}{2}B(x,r) \subset E$.
    The constants for the Ahfors-regularity and uniform rectifiability depends on $n$, $\delta$.
\end{prop}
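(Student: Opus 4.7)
The plan is to reduce the statement to the uniform rectifiability theorem of~\cite{David}, whose hypotheses are adapted to local minimizers of $\FF$ in Appendix~\ref{appendix_david}. As the authors already warn, our local minimizers are not quasiminimizers in the sense of~\cite[Definition 7.21]{David}; the bulk of the work is therefore to check that David's competitor-based arguments go through for the functional $\FF$, and then to quote the resulting statement. This is precisely what Remark~\ref{rmk_david2} provides. Once this is granted, the proposition follows essentially by quotation.

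To execute this, I would first recall the two structural inputs already in hand. Proposition~\ref{prop_af} gives Ahlfors-regularity of $K$ of dimension $n-1$ at scales $\leq r_0$, which is the first ingredient David requires. On $J_u$ the weights satisfy $\overline{u}^2 + \underline{u}^2 \in [2\delta^2, 2\delta^{-2}]$ by the two-phase property, so the singular term $\int_{J_u \cap \overline{B}} (\overline{u}^2 + \underline{u}^2) \dm\HH^{n-1}$ is comparable, up to multiplicative constants depending on $\delta$, to $\HH^{n-1}(J_u \cap \overline{B})$. The volume term $\LL^n(\set{u>0} \cap B)$ is at most $\LL^n(B) = O(r^n)$, which at scales $r \leq r_0$ is a lower-order perturbation of an $(n-1)$-dimensional surface energy. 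With these two observations, the minimality inequality in Definition~\ref{defi_mini} implies a quasiminimality-type comparison for the surface energy $\HH^{n-1}(J_u \cap \overline{B})$ against surface competitors, where the penalty for the perturbation is controlled by $r$.

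Next I would invoke the statement from Appendix~\ref{appendix_david} corresponding to~\cite[Theorem~3.15 and surrounding construction]{David}: under the quasiminimality-type inequality just sketched and the Ahlfors-regularity of $K$, there exists, for each $x \in K$ and $0 < r \leq r_0$ with $B(x,r) \subset X$, a closed, Ahlfors-regular, uniformly rectifiable set $E$ of dimension $(n-1)$ such that $K \cap \tfrac{1}{2}B(x,r) \subset E$. The Ahlfors-regularity and uniform rectifiability constants of $E$ depend only on the comparison constants of the functional (which in turn depend only on $n$ and $\delta$) and on the Ahlfors-regularity constants of $K$ (which also depend only on $n$ and $\delta$). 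Shrinking $r_0$ if necessary to absorb the lower-order volume contribution, one obtains the stated dependence.

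The main obstacle, which the appendix is designed to handle, is verifying that David's competitor constructions remain effective in the presence of the Robin-type boundary term. Concretely, when one replaces $K \cap B$ by a competitor surface and modifies $u$ accordingly, one must check that the increment of $\int_{J_v}(\overline{v}^2 + \underline{v}^2)\dm\HH^{n-1}$ is dominated by a constant (depending on $\delta$) times $\HH^{n-1}(J_v \setminus J_u) + O(r^n)$, so that the quasiminimality inequality used as input in David's arguments has the same form as in the Mumford-Shah case, modulo a harmless volume error at the scale of $r^n$. Granting this, the proof of Proposition~\ref{prop_ur} is complete by the cited theorem.
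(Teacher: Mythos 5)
Your approach matches the paper's: bound the weight factors $\overline{u}^2 + \underline{u}^2$ above and below by $\delta$-dependent constants, absorb the volume term as an $O(r^n)$ error, and then reduce the resulting comparison inequality to the framework of~\cite{David} as adapted in Appendix~\ref{appendix_david}. Two small corrections, one cosmetic and one conceptual. Cosmetically, the lower bound for $\overline{u}^2+\underline{u}^2$ on $J_u$ is $\delta^2$, not $2\delta^2$, since $\underline{u}$ may vanish at a jump; this changes nothing. Conceptually, you suggest that the goal is to dominate the surface-energy increment by a $\delta$-constant times $\HH^{n-1}(J_v \setminus J_u) + O(r^n)$, so as to land back in David's standard quasiminimizer form. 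That cannot work: because the weights of $u$ and $v$ differ on the common part $K\cap L$, one cannot subtract $\HH^{n-1}(K\cap L)$ from both sides. What the minimality inequality actually yields is the strictly weaker bound
\begin{equation*}
\HH^{n-1}(K \cap \overline{B}) \leq \delta^{-4}\,\HH^{n-1}(L \cap \overline{B}) + \delta^{-2}\,\Delta E + \delta^{-2}\omega_n r^n,
\end{equation*}
in which $L \cap \overline B$ appears rather than $L \setminus K$. This is precisely why Remark~\ref{rmk_david2} is needed: it verifies that David's proof of Theorem~\ref{thm_david} uses quasiminimality only once, through inequalities that are in fact implied by the weaker form. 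So the reduction is to (\ref{eq_quasi2}), not to (\ref{eq_quasi}). With that amendment, your argument coincides with the paper's proof.
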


\begin{proof}
    We want to show that $(u,K)$ satisfy Definition~\ref{defi_david_quasi}, or rather the alternative Definition given in Remark~\ref{rmk_david2}.
    Then the Proposition will follow from Theorem~\ref{thm_david}.
    First, it is clear that $(u,K)$ is an admissible pair.
    Let $B$ be an open ball of radius $r > 0$ such that $\overline{B} \subset X$.
    Let an admissible pair $(v,L)$ be a competitor of $(u,K)$ in $B$.
    As explained in Remark~\ref{rmk_david}, we can assume without loss of generality that $L$ is $\HH^{n-1}$ locally
    finite.
    Therefore, $v \in \SBV_{\loc}(X)$ and $\HH^{n-1}(J_v \setminus L) = 0$.
    We have included more details about the construction of $\SBV$ functions in Appendix~\ref{appendix_bv}.
    We can now apply the minimality inequality.
    We have
    \begin{multline}
        \int_B \! \abs{\nabla u}^2 \dm\LL^n + \int_{J_u \cap \overline{B}} \! (\overline{u}^2 + \underline{u}^2) \dm\HH^{n-1} + \LL^n(\set{u > 0} \cap B)\\
        \leq \int_B \! \abs{\nabla v}^2 \dm\LL^n + \int_{J_v \cap \overline{B}} \! (\overline{v}^2 + \underline{v}^2) \dm\HH^{n-1} + \LL^n(\set{v > 0} \cap B)
    \end{multline}
    so
    \begin{multline}
        \int_B \! \abs{\nabla u}^2 \dm\LL^n + \delta^2 \HH^{n-1}(J_u \cap \overline{B}) + \LL^n(\set{u > 0} \cap B) \\
        \leq \int_B \! \abs{\nabla v}^2 \dm\LL^n + \delta^{-2} \HH^{n-1}(J_v \cap \overline{B}) + \LL^n(\set{v > 0} \cap B).
    \end{multline}
    We omit the term $\LL^n(\set{u > 0} \cap B)$ at the left and we bound the term $\LL^n(\set{v > 0} \cap B)$ at the right by $\omega_n r^n$ where $\omega_n$ is the Lebesgue volume of the unit ball.
    We can replace $J_u$ by $K$ at the left since $\HH^{n-1}(K \setminus J_u) = 0$.
    We can replace $J_v$ by $L$ at the right since $\HH^{n-1}(J_v \setminus L) = 0$.
    It follows that
    \begin{equation}
        \HH^{n-1}(K \cap \overline{B}) \leq \delta^{-4} \HH^{n-1}(L \cap \overline{B}) + \delta^{-2} \Delta E + \delta^{-2} \omega_n r^n
    \end{equation}
    where
    \begin{equation}
        \Delta E := \int_B \! \abs{\nabla v}^2 - \int_B \! \abs{\nabla u}^2 \dm\LL^n.
    \end{equation}
\end{proof}

We are going to cite the $\varepsilon$-regularity theorem for our problem~\cite[Theorem 14.1]{Kriventsov}.
Contrary to the $\varepsilon$-regularity theorem for the Mumford-Shah problem, it does not require $\omega_2(x,r)$ to be small.
It says that when $K$ is very close to a plane, $K$ is given by a pair of smooth graphs.
We describe this situation in the next definition.

Given a point $x \in \R^n$, a vector $e_n \in \mathbf{S}^{n-1}$, we can decompose each point $y \in \R^n$ under the form $y = x + (y' + y_n e_n)$ where $y' \in e_n^\perp$ and $y_n \in \R$.
Then for all function $f \colon e_n^\perp \to \R$, we define the graph of $f$ in the coordinate system $(x,e_n)$ as
\begin{equation}\label{eq_gamma}
    \Gamma_{(x,e_n)}(f) := \set{y \in \R^n | y_n = f(y')}.
\end{equation}

\begin{defi}\label{defi_regular}
    Let $u \in \SBV_{\loc}(X)$ be a local minimizer.
    Let $x \in K$ and $R > 0$ be such that $B(x,R) \subset X$.
    Let $0 < \alpha \leq 1$.
    We say that $K$ is \emph{$C^{1,\alpha}$-regular} in $B := B(x,R) \subset X$ if it satisfies the three following conditions.
    \begin{enumerate}[label = (\roman*)]
        \item
            There exists a vector $e_n \in \mathbf{S}^{n-1}$ and two functions $f_i \colon e_n^\perp \to \R$ $(i=1,2)$ such that $f_1 \leq f_2$ and
            \begin{equation}
                K \cap B = \left(\bigcup_{i=1,2} \Gamma_{(x,e_n)}(f_i)\right) \cap B.
            \end{equation}
            The functions $f_1, f_2$ are $C^{1,\alpha}$ and
            \begin{equation}
                R^{-1} \abs{f_i}_\infty + \abs{\nabla f_i}_\infty + R^\alpha \left[\nabla f_i\right]_{\alpha} \leq \frac{1}{4}.
            \end{equation}

        \item
            There are two possible cases.
            The first case is
            \begin{equation}\label{eq_first_case}
                \begin{cases}
                    u > 0 & \text{in $\set{y \in B | y_n < f_1(y') \ \text{or} \ y_n > f_2(y')}$}\\
                    u = 0 & \text{in $\set{y \in B | f_1(y') < y_n < f_2(y')}$}
                \end{cases}
            \end{equation}
            The second case is $f_1 = f_2$ and
            \begin{equation}\label{eq_second_case}
                \begin{cases}
                    u > 0 & \text{in $\set{y \in B | y_n > f_1(y')}$}\\
                    u = 0 & \text{in $\set{y \in B | y_n < f_1(y')}$}
                \end{cases}
            \end{equation}
            or inversely.
    \end{enumerate}
\end{defi}

\begin{thm}[$\varepsilon$-regularity theorem]\label{thm_regularity}
    Let $u \in \SBV_{\loc}(X)$ be a local minimizer and let $x \in K$.
    \begin{enumerate}[label = (\roman*)]
        \item
            For all $\varepsilon > 0$, there exists $\varepsilon_1 > 0$ (depending on $n$, $\delta$, $\beta$) such that the following holds true.
            For $r > 0$ such that $\overline{B}(x,r) \subset X$ and $\beta_2(x,r) + r \leq \varepsilon_1$, we have $\omega_2(x,\tfrac{r}{2}) \leq \varepsilon$.

        \item
            There exists $\varepsilon > 0$, $C \geq 1$ and $0 < \alpha < 1$ (both depending on $n$, $\delta$) such that the following holds true.
            For $r > 0$ such that $\overline{B}(x,r) \subset X$ and $\beta_2(x,r) + r \leq \varepsilon$, the set $K$ is $C^{1,\alpha}$-regular in $B(x,C^{-1}R)$.
    \end{enumerate}
\end{thm}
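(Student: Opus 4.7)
My plan is to prove (i) by a blow-up/compactness argument and then derive (ii) from (i) by the standard harmonic approximation plus Campanato iteration that underlies $\varepsilon$-regularity for two-phase free boundary problems; this is essentially the strategy of \cite[Thm 14.1]{Kriventsov}, which the authors in fact plan to cite directly.

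For (i), I would argue by contradiction: assume a sequence of local minimizers $u_k$ with base points $x_k \in K_k$ and scales $r_k$ satisfying $\beta_2(x_k,r_k)+r_k \to 0$ but $\omega_2(x_k,r_k/2)\geq \varepsilon_0$. Rescale by $v_k(y):=r_k^{-1/2}u_k(x_k+r_k y)$; this is the unique scaling under which the Dirichlet term and the Robin trace term of $\FF$ transform identically, while the volume term picks up a factor $r_k$ and disappears in the limit. By the uniform Ahlfors-regularity (Prop.~\ref{prop_af}) and $\SBV$ compactness, I would extract a subsequence with $v_k \to v_\infty$ in $L^1_{\loc}$ and $K_{v_k}\to K_\infty$ in the Hausdorff sense; smallness of $\beta_2$ forces $K_\infty$ to be a hyperplane through the origin, and passing to the limit in the minimality inequality (lower-semicontinuity of the Dirichlet energy and of the quadratic trace term, together with convergence of the surface measures furnished by the Ahlfors upper bound) yields a blow-up minimizer on $\R^n$ whose jump set is a hyperplane. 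Classification of such minimizers forces $v_\infty$ to be constant on each half-space, hence $\nabla v_\infty \equiv 0$ on $B_{1/2}$; but the lower bound $\omega_2(x_k,r_k/2) \geq \varepsilon_0$ passes through the rescaling and lower-semicontinuity to $\int_{B_{1/2}}|\nabla v_\infty|^2 \geq \varepsilon_0/2$, a contradiction.

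For (ii), once (i) is available, both $\omega_2(x,r/2)$ and $\beta_2(x,r)$ are small. I would then compare $u$ in $B(x,cr)$ to the solution of Laplace's equation in the complement of the best approximating plane subject to the Robin data $\partial_\nu u = -u$ coming from the Euler--Lagrange equation of $\FF$. Standard Caccioppoli plus harmonic-function estimates produce an improvement-of-flatness of the form $\beta_2(x,\theta r)^2 \leq \tfrac{1}{4}\beta_2(x,r)^2 + Cr^{2\alpha}$ for some $\theta \in (0,1)$, which iterated at dyadic scales gives Campanato-type decay and upgrades $K$ to a $C^{1,\alpha}$ pair of graphs as in Definition~\ref{defi_regular}; the one- vs.\ two-phase alternative is dictated by on which sides of the limiting plane the connected component of $\{u>0\}$ sits.

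The main obstacle is the classification of blow-up minimizers on $\R^n$ with a flat free boundary, together with the passage to the limit in the Robin surface term; the Ahlfors lower bound on $\HH^{n-1}(K \cap B(y,\rho))$ from Prop.~\ref{prop_af} is crucial to prevent the jump set from escaping under weak convergence. One can bypass this work by citing \cite[Thm 14.1]{Kriventsov} and reconciling the hypotheses via the inequality $\beta_\infty \lesssim \beta_2^{2/(n+1)}$, which holds on Ahlfors-regular sets and shows that $\beta_2 + r$ small implies the Hausdorff-flatness input that Kriventsov requires.
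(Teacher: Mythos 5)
The paper does not prove this theorem at all: it is imported verbatim as a citation of \cite[Theorem 14.1]{Kriventsov} (an 83-page regularity paper), so your fallback option in the last paragraph --- cite Kriventsov and reconcile the flatness hypotheses via $\beta_\infty \lesssim \beta_2^{2/(n+1)}$ on Ahlfors-regular sets --- is exactly what the authors do, and that reduction is correct (Chebyshev on the $L^2$ beta number using the Ahlfors lower bound).

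Your self-contained proof of (i), however, has a genuine gap at its very first step. The claim that $v_k(y):=r_k^{-1/2}u_k(x_k+r_ky)$ is ``the unique scaling under which the Dirichlet term and the Robin trace term transform identically'' is false: for $v=\lambda u(x+r\,\cdot)$ one has
\begin{equation*}
\int_{B_1}\abs{\nabla v}^2\dm\LL^n=\lambda^2 r^{2-n}\int_{B_r}\abs{\nabla u}^2\dm\LL^n,
\qquad
\int_{J_v\cap B_1}(\overline{v}^2+\underline{v}^2)\dm\HH^{n-1}=\lambda^2 r^{1-n}\int_{J_u\cap B_r}(\overline{u}^2+\underline{u}^2)\dm\HH^{n-1},
\end{equation*}
so the amplitude $\lambda$ cancels from their ratio and the trace term always dominates by a factor $r^{-1}$; no choice of $\lambda$ equalizes them (this is precisely where the weight $u^2$ makes the problem non-Mumford--Shah). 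With $\lambda=r_k^{-1/2}$ you also lose the $L^\infty$ bound ($\abs{v_k}_\infty\sim r_k^{-1/2}\to\infty$), so the passage to the limit in the surface term and the $\SBV$ compactness do not go through as described. The correct blow-up keeps $u$ unrescaled: the Dirichlet term then carries a factor $r$ and vanishes in the limit, and the limiting problem is a weighted perimeter (minimal-surface) problem, not a harmonic function with a free hyperplane. This is exactly why part (i) --- flatness forces $\omega_2$ small --- is a nontrivial decay theorem rather than a hypothesis (the paper stresses that, unlike Mumford--Shah, no smallness of $\omega_2$ is assumed), and why its proof cannot follow the De Giorgi--Carriero--Leaci template you sketch. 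Your outline of (ii) (harmonic/Robin comparison and improvement of flatness, granting (i)) is the right general shape, but it rests on (i).
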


This last result is specific to local minimizers and does not hold true for the general almost-minimizers of \cite[Definition 2.1]{Kriventsov}.

\begin{prop}
    Let $u \in \SBV_{\loc}(X)$ be a local minimizer.
    Let $x \in K$ and $R > 0$ be such that $B(x,R) \subset X$.
    We assume that $K$ is regular in $B := B(x,R)$ (Definition~\ref{defi_regular}) and we denote by $\Gamma_i$ the graph of $f_i$ in $B$ ($i=1,2$).
    Then for each $i=1,2$, $u_{|A_i}$ solves the Robin problem
    \begin{equation}\label{eq_robin0}
        \left\{
            \begin{array}{lcl}
                \Delta u                    & = & 0 \quad \text{in $A_i$}\\
                \partial_{\nu_i}u - u_i     & = & 0 \quad \text{in $\Gamma_i$},
            \end{array}
        \right.
    \end{equation}
    where
    \begin{align}
        A_1 &:= \set{y \in B | y_n < f_1(y')}\\
        A_2 &:= \set{y \in B | y_n > f_2(y')}
    \end{align}
    and $\nu_i$ is the inner normal vector to $A_i$.
\end{prop}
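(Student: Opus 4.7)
The plan is to treat the interior equation and the Robin boundary condition separately. The harmonicity $\Delta u = 0$ in $A_i$ is essentially already available: if $u \equiv 0$ on $A_i$ (which can only occur in the second case of Definition~\ref{defi_regular}), both assertions are trivial. Otherwise $A_i$ is contained in a connected component of $X \setminus \overline{S_u}$ on which $u \geq \delta$, and the discussion following Definition~\ref{defi_mini} gives $\Delta u = 0$ in $A_i$. Henceforth I assume $u \geq \delta$ on $A_i$.

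For the Robin condition I would perform a first variation of $\FF$ by inner perturbations on the $A_i$ side of the free boundary. Fix $\varphi \in C_c^\infty(B)$ and, for $|t|$ sufficiently small, set
\begin{equation*}
v_t := u + t \varphi \mathbf{1}_{A_i}.
\end{equation*}
This is a competitor of $u$ in $B$: it belongs to $\SBV_{\loc}(X)$ with $J_{v_t} = J_u$, agrees with $u$ outside $\overline{B}$, and satisfies $\set{v_t > 0} \cap B = \set{u > 0} \cap B$ as soon as $|t| \abs{\varphi}_\infty < \delta$. In both cases of Definition~\ref{defi_regular} the trace of $u$ on $\Gamma_i$ from the complement of $A_i$ is zero, so the jump integrand on $\Gamma_i$ simply changes from $u^2$ to $(u + t\varphi)^2$, while the contribution on $\Gamma_j$ ($j \neq i$) is unchanged. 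Inserting $v_t$ into the minimality inequality, expanding to first order in $t$, and letting $t \to 0^\pm$ yields the weak Euler--Lagrange identity
\begin{equation*}
\int_{A_i} \nabla u \cdot \nabla \varphi \dm\LL^n + \int_{\Gamma_i} u \varphi \dm\HH^{n-1} = 0 \qquad \text{for every } \varphi \in C_c^\infty(B).
\end{equation*}

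Integration by parts in the $C^{1,\alpha}$ domain $A_i \cap B$, using that $\Delta u = 0$ in $A_i$ and that the outer unit normal of $A_i$ on $\Gamma_i$ equals $-\nu_i$, converts the first integral into $-\int_{\Gamma_i} (\partial_{\nu_i} u) \varphi \dm\HH^{n-1}$; the arbitrariness of $\varphi$ then delivers $\partial_{\nu_i} u - u = 0$ weakly on $\Gamma_i$. The main technical obstacle is giving pointwise meaning to $\partial_{\nu_i} u$: a priori $u \in W^{1,2}(A_i)$ only provides a normal trace in $H^{-1/2}(\Gamma_i)$, which is enough for the weak statement but not for \eqref{eq_robin0}. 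To promote the weak identity to the classical formulation, I would invoke boundary regularity for the Robin problem on a $C^{1,\alpha}$ domain with bounded Robin data, upgrading $u$ to $C^{1,\alpha'}$ up to $\Gamma_i$ on any $B' \subset \subset B$, whence $\partial_{\nu_i} u$ becomes a genuine function and the boundary equality holds in the classical sense.
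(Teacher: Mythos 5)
Your proof takes essentially the same route as the paper's: inserting the competitor $v_t := u + t\varphi\,\mathbf{1}_{A_i}$ into the minimality inequality and extracting the linear term in $t$ gives the weak Euler--Lagrange identity
\begin{equation*}
\int_{A_i} \langle\nabla u, \nabla\varphi\rangle \dm\LL^n + \int_{\Gamma_i} u_i^*\,\varphi \dm\HH^{n-1} = 0 ,
\end{equation*}
which is precisely what \emph{solves the Robin problem} means in this paper (compare the weak formulation given in Appendix~\ref{appendix_robin}).

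Two minor points. Your intermediate claim that \emph{the trace of $u$ on $\Gamma_i$ from the complement of $A_i$ is zero} is false on $\Gamma_1\cap\Gamma_2$ in the first case of Definition~\ref{defi_regular}, where $f_1=f_2$ and the trace from the other side is $u_j^*\geq\delta$. The first-order variation of the jump integrand is nevertheless still $2t\varphi\, u_i^* + t^2\varphi^2$, because the term $(u_j^*)^2$ enters additively and is unchanged; this is exactly why the paper's proof treats $\Gamma_i\setminus\Gamma_j$ and $\Gamma_1\cap\Gamma_2$ as separate cases before arriving at the same conclusion. Finally, your closing paragraph on upgrading the weak identity to a classical pointwise boundary condition is beyond the scope of the Proposition: the statement is meant in the weak sense, and the boundary Schauder estimates you invoke are established later and independently in Appendix~\ref{appendix_robin} (Lemmas~\ref{lem_robin_estimate}, \ref{lem_robin_schauder} and \ref{lem_robin_boundedness}); including them here is harmless but adds unnecessary machinery.
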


\begin{proof}
    We only detail the case (\ref{eq_first_case}) of Definition~\ref{defi_regular} and we prove the Proposition for $i=2$.
    We partition $B$ in three sets (modulo $\LL^n$)
    \begin{align}
        A_1 &:= \set{z \in B | y_n < f_1(y')}\\
        A_2 &:= \set{z \in B | y_n > f_2(y')}\\
        A_3 &:= \set{z \in B | f_1(y') < x_n < f_2(y')}.
    \end{align}
    The first paragraph is devoted to detail a few generalities about traces and upper/lower limits.
    We consider a general $v \in L^\infty(B) \cap W_{\loc}^{1,2}(B \setminus K)$ such that $v = 0$ in $A_3$.
    For each $i = 1,2$, there exists $v_i^* \in L^1(\Gamma_i)$ such that for $\HH^{n-1}$-a.e.\ $x \in \Gamma_i$,
    \begin{equation}\label{eq_trace_def0}
        \lim_{r \to 0} r^{-n} \int_{A_i \cap B(x,r)} \! \abs{v(y) - v_i^*(x)} \dm\LL^n(y) = 0.
    \end{equation}
    The boundary $\Gamma_i$ is $C^1$ so for all $x \in \Gamma_i$, there a vector $\nu_i(x) \in \mathbf{S}^{n-1}$ such that
    \begin{equation}
        \lim_{r \to 0} r^{-n} \LL^n((A_i \Delta H_i^+(x)) \cap B(x,r)) = 0
    \end{equation}
    where
    \begin{equation}
        H_i^+(x) := \set{y \in \R^n | (y - x) \cdot \nu_i(x) > 0}.
    \end{equation}
    Therefore, (\ref{eq_trace_def0}) is equivalent to
    \begin{equation}\label{eq_trace_def}
        \lim_{r \to 0} r^{-n} \int_{H_i^+(x) \cap B(x,r)} \! \abs{v(y) - v_i^*(x)} \dm\LL^n(y) = 0.
    \end{equation}
    For $\HH^{n-1}$-a.e.\ $x \in \Gamma_2$, we detail the relationship between $\overline{v}(x)^2 + \underline{v}(x)^2$ and $v_i^*(x)$.
    We fix $x \in \Gamma_2 \setminus \Gamma_1$ such that (\ref{eq_trace_def}) is satisfied for $i=2$.
    We have $v = 0$ on $A_3$ and $B(x,r)$ is disjoint from $A_1$ for small $r > 0$ so
    \begin{equation}
        \lim_{r \to 0} r^{-n} \int_{(X \setminus A_2) \cap B(x,r)} \! \abs{v} \dm \LL^n =0
    \end{equation}
    which is equivalent to
    \begin{equation}\label{eq_trace_B3}
        \lim_{r \to 0} r^{-n} \int_{H_2^-(x) \cap B(x,r)} \! \abs{v} \dm\LL^n = 0.
    \end{equation}
    Combining (\ref{eq_trace_def}) for $i=2$ and (\ref{eq_trace_B3}), we deduce
    \begin{equation}
        \overline{v}(x)^2 + \underline{v}(x)^2 = v_2^*(x)^2.
    \end{equation}
    Next, we fix $x \in \Gamma_1 \cap \Gamma_2$ such that (\ref{eq_trace_def}) holds true for $i=1$ and $i=2$.
    The surfaces $\Gamma_1$ and $\Gamma_2$ have necessary the same tangent plane at $x$ and the vectors $\nu_i$ are opposed.
    Combining (\ref{eq_trace_def}) for $i=1$ and $i=2$, we deduce
    \begin{equation}
        \overline{v}^2 + \underline{v}^2 = (v_1^*)^2 + (v_2^*)^2.
    \end{equation}

    We come back to our local minimizer $u \in \SBV_{\loc}(X)$.
    We fix $\varphi \in C^1_c(B)$.
    For $\varepsilon \in \R$, we define $v \colon X \to \R$ by
    \begin{equation}
        v :=
        \begin{cases}
            u + \varepsilon \varphi & \text{in $A_2$}\\
            u                       & \text{in $X \setminus A_2$}\\
        \end{cases}
    \end{equation}
    It is clear that $\set{v \ne u} \subset \subset B$ and that $v$ is $C^1$ in $X \setminus K$.
    As $K$ is $\HH^{n-1}$ locally finite in $X$, we have $v \in \SBV_{\loc}(X)$ and $S_v \subset K$.
    Remember that $u \geq \delta$ in $A_1 \cup A_2$, and $u = 0$ in $A_3$.
    We take $\varepsilon$ small enough so that $\varepsilon \abs{\varphi}_\infty < \delta$.
    As a consequence $v > 0$ in $A_1 \cup A_2$ and $v = 0$ in $A_3$.
    Let us check the multiplicities on the discontinuity set.
    As we have seen before, $J_v \cap B \subset \Gamma_1 \cup \Gamma_2$.
    We observe that for $x \in \Gamma_2$ such that the trace $u_2^*(x)$ exists, we have
    \begin{equation}
        v_2^*(x) = u_2^*(x) + \varepsilon \varphi(x)
    \end{equation}
    and for $x \in \Gamma_1$ such the trace $u_1^*(x)$ exists, we have
    \begin{equation}
        v_1^*(x) = u_1^*(x).
    \end{equation}
    Using the previous discussion, we deduce that for $\HH^{n-1}$-a.e.\ on
    $\Gamma_2 \setminus \Gamma_1$,
    \begin{align}
        \overline{v}^2 + \underline{v}^2    &= (u_2^* + \varepsilon \varphi)^2\\
                                            &= (\underline{u}^2 + \overline{u}^2) + 2 \varepsilon \varphi u_2^* + \varepsilon^2 \abs{\varphi}^2
    \end{align}
    that for $\HH^{n-1}$-a.e.\ on $\Gamma_2 \cap \Gamma_1$,
    \begin{align}
        \overline{v}^2 + \underline{v}^2    &= (u_2^* + \varepsilon \varphi)^2 + (u_1^*)^2\\
                                            &= (\underline{u}^2 + \overline{u}^2) + 2 \varepsilon \varphi u_2^* + \varepsilon^2 \abs{\varphi}^2
    \end{align}
    and that for $\HH^{n-1}$-a.e.\ on $\Gamma_1 \setminus \Gamma_2$,
    \begin{align}
        \overline{v}^2 + \underline{v}^2    &= (u_1^*)^2\\
                                            &=\underline{u}^2 + \overline{u}^2.
    \end{align}
    Finally, it is clear that
    \begin{multline}
        \int_B \abs{\nabla v}^2 \dm \LL^n = \int_{B} \abs{\nabla u}^2 \dm \LL^n + 2 \varepsilon \int_{A_2} \langle \nabla u, \nabla \varphi \rangle \dm \LL^n\\+ \varepsilon^2 \int_{A_2} \! \abs{\nabla \varphi}^2 \dm \LL^n.
    \end{multline}
    We plug all these informations in the minimality inequality and we obtain that
    \begin{equation}
        0 \leq 2 \varepsilon \int_{A_2} \langle \nabla u, \nabla \varphi \rangle \dm \LL^n + 2 \varepsilon \int_{\Gamma_2} \! \varphi u_2^* \dm \HH^{n-1} + C(\varphi) \varepsilon^2.
    \end{equation}
    As this holds true for all small $\varepsilon$ (positive or negative), we conclude that
    \begin{equation}
        \int_{A_2} \langle \nabla u, \nabla \varphi \rangle \dm \LL^n + \int_{\Gamma_2} \! \varphi u_2^* \dm \HH^{n-1} = 0.
    \end{equation}
\end{proof}

\section{Porosity of the singular part}

The following result says that the part where $K$ is not regular has many holes in a quantified way.
It also holds true for the almost-minimizers of \cite[Definition 2.1]{Kriventsov}.
It is simpler to obtain than its Mumford-Shah counterpart (see~\cite{Rigot}) because the $\varepsilon$-regularity theorem of~\cite{Kriventsov} only requires to control the flatness.

\begin{prop}[Porosity]\label{prop_porosity}
    Let $u \in \SBV_{\loc}(X)$ be a local minimizer.
    There exists $0 < r_0 \leq 1$, $C \geq 2$ and $0 < \alpha < 1$ (all depending on $n$,$\delta$) for which the following holds true.
    For all $x \in K$ and all $0 < r \leq r_0$ such that $B(x,r) \subset X$, there exists a smaller ball $B(y, C^{-1}r) \subset B(x,r)$ in which $K$ is $C^{1,\alpha}$-regular.
\end{prop}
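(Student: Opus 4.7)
The plan is to combine the uniform rectifiability of $K$ (Proposition~\ref{prop_ur}) with the $\varepsilon$-regularity theorem (Theorem~\ref{thm_regularity}(ii)), using the classical Carleson estimate from the theory of uniformly rectifiable sets: for any closed, Ahlfors-regular, uniformly rectifiable $(n-1)$-set $E$, the quantity $\beta_{E,2}(y,s)^2 \dm\HH^{n-1}(y) \frac{\dm s}{s}$ is a Carleson measure on $E$. A pigeonhole argument in that estimate will produce a point $y \in K$ and a scale $s$ of order $r$ at which $K$ is flat enough for $\varepsilon$-regularity to apply.

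First I would fix the parameter $\varepsilon > 0$ produced by Theorem~\ref{thm_regularity}(ii) and shrink $r_0$ so that every scale appearing below is less than $\varepsilon/2$. Given $x \in K$ and $0 < r \leq r_0$ with $B(x,r) \subset X$, Proposition~\ref{prop_ur} yields a closed, Ahlfors-regular, uniformly rectifiable set $E$ of dimension $n-1$ containing $K \cap \tfrac{1}{2}B(x,r)$, and the Carleson estimate for $E$ reads
\[
    \int_0^{r/4} \! \int_{E \cap B(x,r/4)} \! \beta_{E,2}(y,s)^2 \dm\HH^{n-1}(y) \frac{\dm s}{s} \leq C_1 r^{n-1},
\]
with $C_1$ depending on $n$ and $\delta$ through the uniform rectifiability constants.

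Next I would pigeonhole. Since $K \cap B(x,r/4) \subset E$, restricting the outer integration to $K$ preserves the upper bound; meanwhile Proposition~\ref{prop_af} supplies $\HH^{n-1}(K \cap B(x,r/4)) \geq c_1 r^{n-1}$. By Chebyshev, the Carleson mass of pairs $(y,s) \in (K \cap B(x,r/4)) \times [\rho_0 r, r/4]$ for which $\beta_{E,2}(y,s) > \varepsilon$ is at most $\varepsilon^{-2} C_1 r^{n-1}$, whereas the full product domain carries mass at least $c_1 r^{n-1} \log(1/(4\rho_0))$. Choosing $\rho_0 = \rho_0(n,\delta,\varepsilon)$ small enough that $c_1 \log(1/(4\rho_0)) > \varepsilon^{-2} C_1$ forces the existence of a ``good'' pair $(y,s)$ with $y \in K \cap B(x,r/4)$, $s \in [\rho_0 r, r/4]$, and $\beta_{E,2}(y,s) \leq \varepsilon$.

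Since $K \cap \overline{B}(y,s) \subset E \cap \overline{B}(y,s)$ and the infima defining both $\beta$-numbers are taken over the same family of $(n-1)$-planes through $y$, we get $\beta_{K,2}(y,s) \leq \beta_{E,2}(y,s) \leq \varepsilon$. Combined with $s \leq r/4 < \varepsilon/2$, this yields $\beta_{K,2}(y,s) + s \leq \varepsilon$, so Theorem~\ref{thm_regularity}(ii) delivers constants $C_0 \geq 1$ and $0 < \alpha < 1$ such that $K$ is $C^{1,\alpha}$-regular in $B(y, C_0^{-1}s) \supset B(y, (C_0/\rho_0)^{-1} r)$; the inclusion $B(y, C_0^{-1}s) \subset B(x,r)$ is automatic from $y \in B(x,r/4)$ and $s \leq r/4$. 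Setting $C := C_0/\rho_0$ concludes. The main technical point I foresee is importing the Carleson estimate for $\beta_{E,2}^2$ from the uniform rectifiability theory in~\cite{David} (the relevant material should already be packaged in Appendix~\ref{appendix_david}) and checking that it is compatible with the normalisation used here, where the infima are over planes through the base point rather than over all affine $(n-1)$-planes; the rest is elementary pigeonholing fed into the $\varepsilon$-regularity theorem.
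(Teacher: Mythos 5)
Your overall scheme — uniform rectifiability $\Rightarrow$ a Carleson estimate for a $\beta$-number $\Rightarrow$ pigeonhole to locate a flat ball of comparable scale $\Rightarrow$ apply the $\varepsilon$-regularity theorem — is exactly the one the paper follows, and the pigeonhole computation, the containment checks $B(y,s) \subset \tfrac12 B(x,r) \subset B(x,r) \subset X$, and the final bookkeeping with $C := C_0/\rho_0$ are all sound. The difference is \emph{which} Carleson estimate you feed into the pigeonhole: you invoke the $L^2$ Strong Geometric Lemma, that $\beta_{E,2}(y,s)^2 \dm\HH^{n-1}(y)\,\tfrac{\dm s}{s}$ is a Carleson measure; the paper instead invokes the Weak Geometric Lemma (\cite[(73.13)]{David}), that $\{(y,s):\beta_{E,\infty}(y,s)>\varepsilon\}$ is a Carleson set, and only passes from $\beta_\infty$ to $\beta_2$ at the very last step via the Ahlfors-regularity inequality $\beta_{K,2}(y,s)\leq C\,\beta_{K,\infty}(y,s)$.

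The issue you flag at the end — the mismatch between the infimum over all affine $(n-1)$-planes (as in David's book, and in the $L^2$ Carleson estimate you want to import) and the infimum over planes \emph{through} the base point (as in the paper's $\beta_2$, and as required by Theorem~\ref{thm_regularity}) — is not a cosmetic normalisation check but a genuine gap, and it is precisely the gap the paper's choice of Weak Geometric Lemma is designed to sidestep. For the $L^\infty$ number the two normalisations differ by at most a factor $2$, because the centre $y$ is one of the points of $K\cap\overline{B}(y,s)$ whose distance to the plane is being supped over, so $d(y,V) \leq \sup_z d(z,V)$. For the $L^2$ number no such one-line argument is available: the centre has $\HH^{n-1}$-measure zero, so $\beta^{\mathrm{all}}_{E,2}(y,s)\leq\varepsilon$ by itself does not bound $d(y,V)$. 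One can still bound $d(y,V)$ by using the lower Ahlfors-regularity of $E$ to find a set of positive $\HH^{n-1}$-measure inside $E\cap B(y,\eta s)$ that is close to the optimal plane, then optimising over $\eta$; but this costs a power, typically $\beta^{\mathrm{centre}}_{E,2}(y,s)\lesssim \bigl(\beta^{\mathrm{all}}_{E,2}(y,s)\bigr)^{2/(n+1)}$, and so the pigeonhole threshold has to be adjusted and the argument spelled out. As written, the step ``$\beta_{K,2}(y,s)\leq\beta_{E,2}(y,s)\leq\varepsilon$'' conflates the two normalisations and is not justified. The paper's route — run the pigeonhole on $\beta_\infty$ where the factor-of-$2$ comparison closes the loop, then convert to $\beta_2$ through Ahlfors-regularity at the end — both avoids this extra argument and avoids having to import the $L^2$ Carleson estimate (which is not packaged in Appendix~\ref{appendix_david}; only the Weak Geometric Lemma is cited there). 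If you prefer to keep the $L^2$ route, you should either (a) supply the $\beta^{\mathrm{all}}\to\beta^{\mathrm{centre}}$ comparison with the power loss and choose $\varepsilon'$ accordingly, or (b) run the pigeonhole on the Weak Geometric Lemma as the paper does.
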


\begin{proof}
    The letter $C$ is a constant $\geq 1$ that depends on $n$, $\delta$.
    The letter $\alpha$ is the constant of Theorem~\ref{thm_regularity}.
    For $y \in K$ and $t > 0$ such that $\overline{B}(y,t) \subset X$, we define the $L^\infty$ flatness
    \begin{equation}
        \beta_K(y,t) := \inf_V \sup_{z \in K \cap \overline{B}(y,t)} t^{-1} \dm(z,V),
    \end{equation}
    where the infimum is taken over the affine hyperplanes $V$ of $\R^n$ passing through $y$.
    Note that in~\cite[(41.2)]{David}, the infimum is taken over all affine hyperplanes $V$ of $\R^n$ (not necessarily passing through $y$); this would decrease our $\beta$ number but no more than a factor $\frac{1}{2}$.
    Indeed, if $V$ is any hyperplane of $\R^n$ and $y'$ is the orthogonal projection of $y$ onto $V$, then the hyperplane $V - (y'-y)$ is passing through $y$ so we have we have
    \begin{align}
        \beta_K(y,t)    &\leq \sup_{z \in K \cap \overline{B}(y,t)} \dm(z,V-(y'-y))\\
                        &\leq \abs{y' - y} + \sup_{z \in K \cap \overline{B}(y,t)} \dm(z,V)\\
                        &\leq 2\sup_{z \in K \cap \overline{B}(y,t)} \dm(z,V).
    \end{align}
    We also observe that
    \begin{equation}
        \beta_{K,2}(y,t)^2 \leq t^{-(n-1)} \HH^{n-1}(K \cap \overline{B}(y,t)) \beta_K(y,t)^2
    \end{equation}
    so as soon as $t$ is small enough for the Ahlfors-regularity to hold, we have $\beta_{K,2}(y,t) \leq C \beta_K(y,t)$.

    Let $r_0$ be the minimum between the radius of Proposition~\ref{prop_af} (Ahfors-regularity) and the radius of Proposition~\ref{prop_ur} (uniform rectifiability).
    We fix $x \in K$ and $0 < r \leq r_0$ such that $B(x,r) \subset X$.
    According to Proposition~\ref{prop_ur}, there exists an Ahlfors-regular and uniformly rectifiable set $E$ such that $K \cap \frac{1}{2}B(x,r) \subset E$.
    Moreover, the constants for the Ahfors-regularity and uniform rectifiability depends on $n$, $\delta$.
    For $y \in E$ and $t > 0$, we define as before
    \begin{equation}
        \beta_E(y,t) := \inf_V \sup_{z \in E \cap \overline{B}(y,t)} t^{-1} \dm(z,V),
    \end{equation}
    where the infimum is taken on the set of all affine hyperplanes $V$ of $\R^n$ passing through $x$.
    As $E$ is Ahlfors-regular and uniformly rectifiable, the Weak Geometric Lemma~\cite[(73.13)]{David} states that for all $\varepsilon > 0$, the set
    \begin{equation}
        \set{(y,t) | y \in E,\ 0 < t < \mathrm{diam}(E),\ \beta_E(y,t) > \varepsilon}
    \end{equation}
    is a Carleson set.
    This means that for all $\varepsilon > 0$, there exists $C_0(\varepsilon) \geq 1$ (depending on $n$, $\delta$, $\varepsilon$) such that for all $y \in E$ and all $0 < t < \mathrm{diam}(E)$,
    \begin{equation}
        \int_0^t \int_{E \cap B(y,t)} \! \mathbf{1}_{\set{\beta_E(z,s) > \varepsilon}}(z) \dm\HH^{n-1}(z) \frac{\dm s}{s} \leq C_0(\varepsilon) t^{n-1}.
    \end{equation}
    We only apply this property with $y := x$. We observe that for all $z \in K \cap B(x,\tfrac{1}{4}r)$ and for all $0 < s \leq \tfrac{1}{4} r$, we have $K \cap \overline{B}(z,s) \subset E \cap \overline{B}(z,s)$ so $\beta_K(z,s) \leq \beta_E(z,s)$.
    Thus for all $0 < t  < \mathrm{diam}(K \cap \tfrac{1}{2}B(x,r))$ such that $t \leq \tfrac{1}{4} r$, we have
    \begin{equation}
        \int_0^t \int_{K \cap B(x,t)} \! \mathbf{1}_{\set{\beta_K(z,s) > \varepsilon}}(z) \dm\HH^{n-1}(z) \frac{\dm s}{s} \leq C_0(\varepsilon) t^{n-1}.
    \end{equation}
    We only apply this property with $t := \frac{1}{4} \mathrm{diam}(K \cap B(x,r))$.
    \begin{equation}\label{eq_wgl}
        \int_0^t \int_{K \cap B(x,t)} \! \mathbf{1}_{\set{\beta_K(z,s) > \varepsilon}}(z) \dm\HH^{n-1}(z) \frac{\dm s}{s} \leq C_0(\varepsilon) t^{n-1}.
    \end{equation}
    Note that $C^{-1} r \leq t \leq \tfrac{1}{4}r$, where the first inequality comes from the Ahlfors-regularity of $K$.
    We are going to deduce from (\ref{eq_wgl}) that for all $\varepsilon > 0$, there exists $C(\varepsilon) \geq 1$, a point $z \in K \cap B(x,t)$ and a radius $s$ such that $C(\varepsilon)^{-1} t \leq s \leq t$ and $\beta_K(z,s) \leq \varepsilon$.
    We proceed by contradiction for some $C(\varepsilon)$ to be precised.
    We have therefore
    \begin{align}
        \begin{split}
        &\int_0^t \int_{K \cap B(x,t)} \! \mathbf{1}_{\set{\beta_K(z,s) > \varepsilon}}(z) \dm\HH^{n-1}(z) \frac{\dm s}{s}\\
        &\geq \HH^{n-1}(K \cap B(x,t)) \int_{C(\varepsilon)^{-1} t}^t \! \frac{\dm s}{s}
        \end{split}\\
        &\geq \HH^{n-1}(K \cap B(x,t)) \ln(C(\varepsilon))\\
        &\geq C^{-1} t^{n-1} \ln(C(\varepsilon))
    \end{align}
    This contradicts (\ref{eq_wgl}) if $C(\varepsilon)$ is too big compared to $C_0(\varepsilon)$.

    We fix $\varepsilon > 0$ (to be precised soon) and we assume that we have a corresponding pair $(z,s)$ as above.
    In particular, $\beta_{K,2}(z,s) \leq C \beta_K(z,s) \leq C \varepsilon$. According to the second statement of Theorem~\ref{thm_regularity}, we can fix $\varepsilon$ (depending on $n$, $\delta$) so that if $r_0 \leq \varepsilon$, then $K$ is $C^{1,\alpha}$-regular in $B(z,C^{-1}s)$.
\end{proof}

\section{Higher integrability of the gradient}

\begin{thm}\label{thm_integrability}
    Let $u \in \SBV(X)$ be minimal.
    There exists $0 < r_0 \leq 1$, $C \geq 1$ and $p > 1$ (depending on $n$, $\delta$) such that the following holds true.
    For all $x \in X$, for all $0 \leq r \leq r_0$ such that $B(x,r) \subset X$,
    \begin{equation}
        \int_{\tfrac{1}{2}B\left(x,r\right)} \abs{\nabla u}^{2p} \dm \LL^n \leq C r^{n-p}.
    \end{equation}
\end{thm}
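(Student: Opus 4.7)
The plan is to follow the scheme of De Philippis--Figalli announced in the introduction: a local higher integrability lemma on balls where $K$ is $C^{1,\alpha}$-regular, combined with a covering/iteration argument based on the porosity Proposition~\ref{prop_porosity}.

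\textbf{Step 1 (local higher integrability).} First I would show: if $K$ is $C^{1,\alpha}$-regular in $B(y,s) \subset X$, then for every $p \geq 1$
\begin{equation*}
    \int_{B(y,cs)} \abs{\nabla u}^{2p} \dm \LL^n \leq C s^{n-p}.
\end{equation*}
On each component $A_i$ of Definition~\ref{defi_regular}, $u$ solves the Robin problem $\Delta u = 0$ in $A_i$, $\partial_\nu u = u$ on $\Gamma_i$ proved at the end of Section~2. Flattening the $C^{1,\alpha}$-graph $\Gamma_i$ and applying Schauder regularity for the Robin problem, together with $\abs{u}_\infty \leq \delta^{-1}$, yields an $L^\infty$ estimate
\begin{equation*}
    \|\nabla u\|_{L^\infty(B(y,cs))} \leq C \biggl(s^{-n} \int_{B(y,s)} \abs{\nabla u}^2 \dm\LL^n \biggr)^{1/2} + Cs.
\end{equation*}
The Ahlfors bound~\eqref{eq_af1} gives $s^{-n}\int_{B(y,s)} \abs{\nabla u}^2 \dm\LL^n \leq C s^{-1}$, so $\abs{\nabla u} \leq C s^{-1/2}$ on $B(y,cs)$; integrating produces the claimed estimate.

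\textbf{Step 2 (covering).} For $x \in K$ and $0 < r \leq r_0$, I iterate Step~1 at all scales. Proposition~\ref{prop_porosity} provides a regular sub-ball $B_1 \subset B(x,r)$ of radius $\geq C^{-1} r$; Step~1 contributes $\int_{c B_1} \abs{\nabla u}^{2p} \dm \LL^n \leq C r^{n-p}$, and Ahlfors-regularity inside $B_1$ shows that $\HH^{n-1}(K \cap cB_1) \geq C^{-1} r^{n-1}$, so the residual $\HH^{n-1}$-mass of $K$ in $B(x,r) \setminus cB_1$ is a fraction $1-\theta < 1$ of the original, with $\theta > 0$ depending on $n, \delta$. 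I then cover this residual set by boundedly many $K$-centered balls at a smaller scale and recurse; a careful bookkeeping of the sub-ball radii, together with a choice of $p > 1$ close enough to $1$, makes the geometric series over scales converge to $C r^{n-p}$. For a general $x \in X$, either $B(x,r/2) \cap K = \emptyset$ and harmonicity plus interior estimates give a stronger bound, or one translates the center to a point of $K$ within distance $r/2$ and applies the previous case.

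\textbf{Main obstacle.} The technical heart is Step~1: one must verify that Schauder/$W^{1,p}$ estimates for the Robin problem go through with constants depending only on $n, \delta$ (the uniform $C^{1,\alpha}$-norm of the graphs $f_i$ from Definition~\ref{defi_regular} is the key input here). A subtlety arises in the first case of Definition~\ref{defi_regular} when $f_1 = f_2$ on a subset: the two sheets touch, creating a Lipschitz contact locus; but $\abs{\nabla f_i}_\infty \leq 1/4$ keeps the relevant domains Lipschitz with controlled constants, which suffices for the $L^\infty$ gradient bound. Step~2 is more elementary but requires balancing the porosity constant, the density drop $\theta$, and the choice of $p > 1$ so that the recursion closes -- this balancing is precisely what singles out a particular $p > 1$ depending only on $n, \delta$.
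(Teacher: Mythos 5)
Your proposal follows essentially the same two-step scheme as the paper, and Step~1 matches the paper's mechanism closely: in a $C^{1,\alpha}$-regular ball the gradient is bounded pointwise by combining the interior subharmonicity of $\abs{\nabla u}^2$ with the boundary Schauder estimate for the Robin problem (the paper's Lemma~\ref{lem_robie_estimate} in Appendix~\ref{appendix_robin} gives $\abs{\nabla u}_\infty \lesssim (\fint \abs{\nabla u}^2)^{1/2} + \abs{u}_\infty$, which with Ahlfors regularity and $\abs{u}_\infty \leq \delta^{-1}$ gives $\abs{\nabla u} \lesssim s^{-1/2}$ on the half-ball, exactly as you state, modulo your $Cs$ term which should really be $C\delta^{-1}$, harmless since $s\leq 1$). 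Your worry about the contact locus $\{f_1 = f_2\}$ is unnecessary: each $A_i$ is a $C^{1,\alpha}$-graph domain with controlled norm regardless of whether the two sheets touch, so the estimate applies separately in $A_1$ and $A_2$.

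Where your write-up diverges from the paper, and where I see a genuine gap, is the bookkeeping in Step~2. The paper packages the iteration into Lemma~\ref{lem_technical}, which is phrased in terms of the superlevel sets $A_h := \{v > M^h\}$ of the auxiliary function $v := R\abs{\nabla u}^2$ rather than as a domain decomposition, and the crucial technical input is a dedicated covering lemma (Lemma~\ref{lem_covering}). The point of that covering lemma is precisely the disjointness that your sketch takes for granted: when you ``cover the residual set by boundedly many $K$-centered balls at a smaller scale and recurse,'' you are implicitly assuming that the balls at level $j$ can be taken \emph{disjoint from the regular balls already extracted at levels $< j$}. This is not automatic: a naive Vitali or Besicovitch covering of the residual $K$ at a finer scale will in general bite into the previously extracted regular balls, and then the Ahlfors lower bound you are using to get the density drop counts $K$-mass that was already removed, so the geometric decay $\HH^{n-1}(K_h)\lesssim (1-\theta)^h r^{n-1}$ fails. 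The covering lemma produces, from a family of pairwise far-apart regular balls $B_k$ and a graph-like set $E$, a cover of $E\setminus\bigcup_k B_k$ at any smaller scale by balls whose shrinkings are disjoint from all the $B_k$; it uses the graph structure in the $2B_k$'s to push candidate centers outward past the boundaries of the holes. Without this lemma (or an equivalent careful construction) the ``residual mass drops by $1-\theta$'' assertion is not justified across levels, and the geometric series you invoke does not obviously close. So your outline is the right one, and the balance $p-1 < \ln\bigl(1/(1-\theta)\bigr)/\ln\bigl(1/\rho\bigr)$ is indeed what pins down $p$, but the covering lemma is the missing idea that makes the recursion rigorous.
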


The higher integrability is well known for weak solutions of elliptic systems (\cite[Theorem 2.1]{Gia}).
In this case, the proof consists in combining the Caccioppoli-Leray inequality and the Sobolev-Poincaré inequality to deduce that $\abs{\nabla u}^\frac{2n}{n+2}$ satisfies a reverse Hölder inequality.
The higher integrability is then an immediate consequence of Gehring Lemma.
In our case, $u$ is still a weak solution of an elliptic system but we lack information about the regularity of $K$ to carry out this method.

We draw inspiration from~\cite{DPF} but we simplify the proof by singling out an higher integrability lemma (Lemma~\ref{lem_technical} below) and a covering lemma (Lemma~\ref{lem_covering} below) and by removing the need of~\cite[Lemma 3.2]{DPF} (the existence of good radii).

\begin{proof}[Proof of Theorem~\ref{thm_integrability}]
    There exists $0 < r_0 \leq 1$, such that for all $x \in X$ and all $0 \leq R \leq r_0$ such that $\overline{B}(x,R) \subset X$, one can applies Lemma~\ref{lem_technical} below in the ball $B(x,R)$ to the function $v := R \abs{\nabla u}^2$.
    The assumption (i) follows from the Ahlfors-regularity of $K$ (Proposition~\ref{prop_af}).
    The assumption (ii) follows from the porosity (Proposition~\ref{prop_porosity}).
    The assumption (iii) follows from interior/boundary gradient estimates for the Robin problem and from the Ahlfors-regularity.
    In particular, the interior estimate can be derived from the subharmonicity of $\abs{\nabla u}^2$ in $X \setminus K$ and the boundary estimate is detailed in Lemma~\ref{lem_robin_estimate} in Appendix~\ref{appendix_robin}.
\end{proof}

\begin{lem}\label{lem_technical}
    We fix a radius $R > 0$ and an open ball $B_R$ of radius $R$.
    Let $K$ be a closed subset of $B_R$ and $v \colon B_R \to \R^+$ be a non-negative Borel function.
    We assume that there exists $C_0 \geq 1$ and $0 < \alpha \leq 1$ such that the following holds true.
    \begin{enumerate}[label = (\roman*)]
        \item
            For all ball $B(x,r) \subset B_R$,
            \begin{equation}
                C_0 r^{n-1} \leq \HH^{n-1}(K \cap B(x,r)) \leq C_0 r^{n-1}.
            \end{equation}

        \item
            For all ball $B(x,r) \subset B_R$ centered in $K$, there exists a smaller ball $B(y,C_0^{-1} r) \subset B(x,r)$ in which $K$ is $C^{1,\alpha}$-regular (Definition~\ref{defi_regular}).

        \item
            For all ball $B(x,r) \subset B_R$ such that $K$ is disjoint from $B(x,r)$ or $K$ is $C^{1,\alpha}$-regular in $B(x,r)$ (Definition~\ref{defi_regular}), we have
            \begin{equation}
                \sup_{\tfrac{1}{2}B(x,r)} v(x) \leq C_0 \left(\frac{R}{r}\right).
            \end{equation}
    \end{enumerate}
    Then there exists $p > 1$ and $C \geq 1$ (depending on $n$, $C_0$) such that
    \begin{equation}
        \fint_{\tfrac{1}{2}B_R} v^p \leq C.
    \end{equation}
\end{lem}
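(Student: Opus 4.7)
The plan is to bound the distribution function of $v$ via a quantitative porosity argument, combining (iii) for the pointwise bound, (ii) to produce regular sub-balls at every scale, and (i) to track the $\HH^{n-1}$-measure.

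Fix $\lambda > 0$ and set $s := C_0 R / \lambda$. Call a ball $B' = B(y', r') \subset B_R$ \emph{good} if it is either disjoint from $K$ or $K$ is $C^{1,\alpha}$-regular in it; by (iii), $v \leq C_0 R / r'$ on $\tfrac{1}{2} B'$. If $x \in \tfrac{1}{2} B_R$ satisfies $v(x) > \lambda$, then $x$ cannot belong to the half of any good ball of radius $\geq s$. In particular $\mathrm{dist}(x, K) < s$, because otherwise $B(x, s)$ is good and contains $x$ in its half. Defining
\[
\Sigma_s := \left\{ y \in K : \text{no good ball of radius } \geq s \text{ contains } y \text{ in its half}\right\},
\]
I would then check that $\{v > \lambda\} \cap \tfrac{1}{2} B_R \subset \{ x : \mathrm{dist}(x, \Sigma_s) < s\}$: if the nearest $K$-point $y$ to $x$ lay in $K \setminus \Sigma_s$, a good ball of radius $\geq s$ with $y$ in its half-ball would also contain $x$ after a scale adjustment, forcing $v(x) \leq C \lambda$.

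The heart of the proof is the porosity-to-measure decay
\[
\HH^{n-1}(\Sigma_s \cap B_R) \leq C (s/R)^\beta R^{n-1}
\]
for some $\beta > 0$ depending only on $n$ and $C_0$. The idea is to iterate (ii) at dyadic scales from $R$ down to $s$. For $y \in \Sigma_s$ and $t \gtrsim s$, (ii) applied to $B(y, t)$ yields a sub-ball $B(z, C_0^{-1} t) \subset B(y, t)$ in which $K$ is $C^{1,\alpha}$-regular; every $K$-point in this sub-ball has regularity scale $\gtrsim t \geq s$, so it does not lie in $\Sigma_s$. Thanks to (i), the excised $\HH^{n-1}$-mass is a definite fraction of $\HH^{n-1}(K \cap B(y, t))$, and iterating gives the geometric decay, i.e.\ $\Sigma_s$ is porous inside the Ahlfors-regular set $K$, uniformly at scales $\gtrsim s$.

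Combining the two steps, $\LL^n(\{v > \lambda\} \cap \tfrac{1}{2} B_R) \leq C s \cdot \HH^{n-1}(\Sigma_s \cap B_R) \leq C \lambda^{-(1+\beta)} R^n$. A layer-cake integration then yields $\fint_{\tfrac{1}{2} B_R} v^p \leq C$ for any $1 < p < 1 + \beta$. The main obstacle is the second step: the regular sub-ball produced by (ii) need not contain $y$ itself, so the iteration must follow $y$ carefully while ensuring a definite removal of $\HH^{n-1}$-mass of $\Sigma_s$ near $y$ at every dyadic scale. I expect this Carleson-type packing argument to be the technical crux, and presumably what the covering lemma alluded to in the proof of Theorem~\ref{thm_integrability} is designed to make precise.
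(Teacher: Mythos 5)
Your overall strategy coincides with the paper's: a layer-cake reduction to a distributional estimate, the observation via (iii) that $\{v>\lambda\}$ must lie within $s\sim C_0R/\lambda$ of the ``non-porous'' part of $K$ at scale $s$, and a conversion of porosity into geometric decay of $\HH^{n-1}$-mass. Your first step is sound modulo routine constant adjustments (to conclude $v(x)\le C\lambda$ you need the nearby point $y\notin\Sigma_s$ to sit in a small enough fraction of its good ball, and the $\LL^n$ bound really involves $\HH^{n-1}(K\cap\mathcal{N}_s(\Sigma_s))$ rather than $\HH^{n-1}(\Sigma_s)$, but both are absorbed by redefining $\Sigma_s$ with smaller fractions). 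The genuine gap is the decay estimate $\HH^{n-1}(\Sigma_s\cap B_R)\le C(s/R)^\beta R^{n-1}$, which carries the entire lemma and which you assert rather than prove. The difficulty is not only that the regular sub-ball from (ii) misses $y$: when you pass from scale $t$ to scale $t/2$ you must sum the excised mass over many centers, and the holes produced at different centers and at different scales overlap, so ``a definite fraction of the remaining mass is removed at each scale'' does not follow from applying (ii) pointwise. One needs a single global family of regular balls, built scale by scale, which are pairwise disjoint (so their masses add via (i)) and which avoid all balls built at coarser scales (so no mass is excised twice); and the covering at the new scale must still capture the points of $K$ squeezed against the boundaries of the coarser balls.

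This is exactly what the paper's Covering Lemma (Lemma~\ref{lem_covering}) is engineered for: given the disjoint regular balls $(B_k)$ already constructed, and using that $K\cap 2B_k$ is a union of Lipschitz graphs (Definition~\ref{defi_regular}), it covers $K\setminus\bigcup_k B_k$ by balls $D_i$ of the current radius whose dilates $20^{-1}D_i$ are pairwise disjoint and disjoint from $\bigcup_k B_k$; porosity then places a fresh regular ball $B_i\subset 40^{-1}D_i$ in each. Writing $K_h$ for the part of $K$ not yet covered at step $h$, Ahlfors-regularity gives $\HH^{n-1}(K_h^*)\le C_0^{n+1}\HH^{n-1}(K_h\setminus K_{h+1})$, and hence the telescoping inequality $\lambda^h\HH^{n-1}(K_h)\le 2\HH^{n-1}(K\cap B_R)$ with $\lambda=C_0^{n+1}(C_0^{n+1}-1)^{-1}>1$; this is precisely your $(s/R)^\beta$ decay, with the superlevel sets $A_h$ controlled by the $(1+C_0)D_i$ at each stage. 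Until you supply a construction of this kind (or an equivalent stopping-time argument on a dyadic decomposition of $K$), the central estimate of your proof remains unestablished.
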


The proof of Lemma~\ref{lem_technical} takes advantage of the following covering lemma.
We use the notation $\Gamma_{(x,e_n)}$ defined at line (\ref{eq_gamma}).
The assumption (ii) says that in each double ball $2 B_k$, the set $E$ is an union of Lipschitz graphs which are close to an hyperplane.

\begin{lem}[Covering Lemma]\label{lem_covering}
    Let $E \subset \R^n$ be a bounded set.
    Let $(B_k)$ be a family of open balls of center $x_k \in \R^n$ and radius $R_k > 0$.
    We assume that
    \begin{enumerate}[label = (\roman*)]
        \item
            for all $k \ne l$, $2B_k \cap B_l = \emptyset$;

        \item
            for all $k$, for all $x \in E \cap 2 B_k$, there exists a vector $e_n \in \mathbf{S}^{n-1}$ and a $\frac{1}{2}$-Lipschitz function $f \colon e_n^\perp \to \R$ such that $\abs{f} \leq \frac{1}{2} R_k$ and
            \begin{equation}
                x \in \Gamma_{(x_k,e_n)}(f) \cap 2 B_k \subset E.
            \end{equation}
    \end{enumerate}
    Let $0 < r \leq \inf_k R_k$.
    There exists a sequence of open balls $(D_i)_{i \in I}$ of radius $r$ and centered in $E \setminus \bigcup_k B_k$ such that
    \begin{equation}
        E \setminus \bigcup_k B_k \subset \bigcup_{i \in I} D_i
    \end{equation}
    and the balls $(20^{-1} D_i)_{i \in I}$ are pairwise disjoint and disjoint from $\bigcup_k B_k$.
\end{lem}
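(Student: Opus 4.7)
The plan is a standard Vitali selection on the safe subset
\[
F^* := \{y \in E \setminus \bigcup_k B_k : d(y,\bigcup_k B_k) \geq r/20\},
\]
coupled with a sliding argument along the graphs provided by assumption (ii) to push the remaining points of $E \setminus \bigcup_k B_k$ into $F^*$ across a distance much less than $r$.

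For the Vitali step I would extract a maximal disjoint subfamily: a family $(D_i)_{i \in I} = (B(y_i,r))_{i \in I}$ with centers $y_i \in F^*$ such that the shrunken balls $(B(y_i,r/20))$ are pairwise disjoint and the collection is maximal with these two properties. The shrunken balls are then automatically disjoint from every $B_k$ since $d(y_i,B_k) \geq r/20$. Maximality gives $F^* \subset \bigcup_i B(y_i,r/10) \subset \bigcup_i D_i$: for $y \in F^*$, either $y$ is already some $y_j$ or adding $B(y,r/20)$ would break disjointness, in which case $B(y,r/20) \cap B(y_j, r/20) \neq \emptyset$ for some $j$ and $|y - y_j| < r/10$.

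It remains to cover $(E \setminus \bigcup_k B_k) \setminus F^*$. Fix such a $y$ and choose $k$ with $d(y,B_k) < r/20$; since $r \leq R_k$ this gives $|y - x_k| < 2R_k$, so $y \in 2B_k$. Assumption (ii) supplies a unit vector $e_n$ and a $\tfrac{1}{2}$-Lipschitz $f$ with $|f| \leq R_k/2$ such that the portion of $\Gamma_{(x_k,e_n)}(f)$ in $2B_k$ is contained in $E$ and passes through $y$. Decompose $y = x_k + y'_0 + f(y'_0)e_n$; the bounds $|y - x_k| \geq R_k$ and $|f(y'_0)| \leq R_k/2$ force $|y'_0| \geq \tfrac{\sqrt{3}}{2} R_k$. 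I would then slide $y$ radially outward along the graph by setting, for $t > 0$,
\[
y^* := x_k + \bigl(y'_0 + t\, y'_0/|y'_0|\bigr) + f\bigl(y'_0 + t\, y'_0/|y'_0|\bigr)\, e_n.
\]
A direct computation using $|f| \leq R_k/2$ and the Lipschitz bound yields
\[
|y^* - x_k|^2 - |y - x_k|^2 \geq tR_k\bigl(\sqrt{3} - \tfrac{1}{2}\bigr), \qquad |y^* - y| \leq \tfrac{\sqrt{5}}{2}\,t.
\]
Taking $t = r/5$ one checks that $|y^* - x_k| \geq R_k + r/20$ and $|y^* - x_k| < 2R_k$, so $y^* \in \Gamma_{(x_k,e_n)}(f) \cap 2B_k \subset E$ and $d(y^*, B_k) \geq r/20$; the hypothesis $2B_k \cap B_l = \emptyset$ for $l \neq k$ then forces $d(y^*, B_l) \geq r/20$ as well. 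Hence $y^* \in F^*$, and the previous step produces $j$ with $|y^* - y_j| < r/10$, whence $|y - y_j| < \tfrac{\sqrt{5}}{10}r + r/10 < r$, so $y \in D_j$.

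The delicate point is the sliding estimate: one must verify that a tangential displacement of length $\sim r$ moves $y$ a distance $\gtrsim r$ past $\partial B_k$ without escaping $2B_k$. This is exactly what the constraint $|f| \leq R_k/2$ buys, since it rules out the graph being nearly tangent to $\partial B_k$ at their intersection and produces the quantitative outward component $|y'_0| \geq \tfrac{\sqrt{3}}{2} R_k$; without this lower bound the sliding argument would fail, and one would have no way of separating $y^*$ from $\partial B_k$ by a definite fraction of $r$.
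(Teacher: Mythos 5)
Your proposal is correct and follows essentially the same strategy as the paper: a maximal separated (Vitali) family chosen in the part of $E\setminus\bigcup_k B_k$ that keeps a definite distance from the balls $B_k$, combined with the same sliding construction along the Lipschitz graph of assumption (ii), where the bound $\abs{f}\leq\tfrac{1}{2}R_k$ yields the lower bound $\abs{y_0'}\geq\tfrac{\sqrt{3}}{2}R_k$ on the tangential component and hence a quantitative outward displacement past $\partial B_k$. Only the bookkeeping of constants (safety margin $r/20$ versus the paper's rescaling $r\mapsto r_0/10$) differs.
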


\begin{proof}
    Let $0 < r_0 \leq \inf_k R_k$.
    We introduce the set
    \begin{equation}
        F := E \setminus \bigcup_k B_k.
    \end{equation}
    The goal is to cover $F$ with a controlled number of balls of radius $r_0$.
    Let $r$ be a radius $0 < r \leq r_0$ which will be precised during the proof.
    As $F$ is bounded, there exists a maximal sequence of points $(x_i) \in F$ such that $B(x_i,r) \subset \R^n \setminus \bigcup_k B_k$ and $\abs{x_i - x_j} \geq r$.
    For $i \ne j$, we have $\abs{x_i - x_j} \geq r$ so the balls $(B(x_i, \tfrac{1}{2}r))_i$ are disjoint.
    Next, we show that
    \begin{equation}
        F \subset \bigcup_i B(x_i, 10 r).
    \end{equation}
    Let $x \in F$.
    If $B(x,r) \subset \R^n \setminus \bigcup_k B_k$, then by maximality of $(x_i)$, there exists $i$ such that $x \in B(x_i,r) \subset B(x_i,10r)$.
    Now we focus on the case where there exists an index $k_0$ such that $B(x,r) \cap B_{k_0} \ne \emptyset$.
    The radius of $B_{k_0}$ is denoted by $R$ and we assume without loss of generality that its center is $0$.
    As $x \in F = E \setminus \bigcup_k B_k$ and $B(x,r) \cap B(0,R) \ne \emptyset$, we have $R < \abs{x} < R + r$.
    We are going to build a point $y \in E$ such that $R + r < \abs{y} < R + 7r$ and $\abs{x - y} < 9r$.

    Since $r \leq R$, we observe that $x \in B(0,2R)$.
    According to the assumptions of the lemma, there exists two scalars $0 < \varepsilon, L \leq \frac{1}{2}$, a vector $e_n \in \mathbf{S}^{n-1}$ and a $L$-Lipschitz $f \colon e_n^\perp \to R$ such that $\abs{f} \leq \varepsilon R$ and
    \begin{equation}
        x \in \set{y \in B(0,2R) | y_n = f(y')} \subset E.
    \end{equation}
    Here, we have decomposed each point $y \in \R^n$ under the form $y = y' + y_n e_n$ where $y' \in n^\perp$ and $y_n \in \R$.
    The estimate $R < \abs{x} < R + r$ can be rewritten
    \begin{equation}
        R < \abs{x' + f(x')e_n} < R + r.
    \end{equation}
    We consider $t \geq 1$ such that $\abs{t x' + f(x')e_n} = R + 4r$ and we estimate how close $t x'$ is to $x'$.
    We have
    \begin{align}
        \abs{x'}    &\geq \sqrt{R^2 - \abs{f(x')}^2}\\
        \abs{t x'}  &\leq \sqrt{(R + 4r)^2 - \abs{f(x')}^2}
    \end{align}
    so
    \begin{align}
        \abs{t x' - x'} &\leq \sqrt{(R + 4r)^2 - \abs{f(x')}^2} - \sqrt{R^2 - \abs{f(x')}^2}\\
                        &\leq \frac{4 R r + 8 r^2}{\sqrt{R^2 - \abs{f(x')}^2}}.
    \end{align}
    We assume $r \leq \tfrac{1}{8}R$ and we recall that $\abs{f(x')} \leq \varepsilon R$ with $\varepsilon \leq \frac{1}{2}$ so this simplifies to
    \begin{equation}
        \abs{t x' - x'} \leq \frac{5 r}{\sqrt{1 - \varepsilon^2}} < 6r.
    \end{equation}
    Next, we define $y := tx' + f(tx')e_n$ and we recall that $f$ is $L$-Lipschitz with $L \leq \frac{1}{2}$ to estimate
    \begin{align}
        \abs{y - [t x' + f(x')e_n]} &= \abs{f(t x') - f(x')}\\
                                    &< 3 r.
    \end{align}
    Since $\abs{t x' + f(x')e_n} = R + 4 r$, this yields
    \begin{equation}\label{eq_y1}
        R + r < \abs{y} < R + 7 r.
    \end{equation}
    We also estimate
    \begin{align}
        \abs{y - x} &\leq \abs{t x' - x'} + \abs{f(tx') - f(x')}\\
                    &< 9r\label{eq_y2}.
    \end{align}
    As $r \leq \tfrac{1}{8}R$, the inequalities (\ref{eq_y1}) imply $y \in B(0,2R)$ and thus $y \in E$.
    We are going to justify that $B(y,r) \subset \R^n \setminus \bigcup_k B_k$.
    We recall that $B(0,2R)$ is disjoint from all the other balls of the family $(B_k)$.
    By (\ref{eq_y1}), we observe that
    \begin{align}
        B(y,r)  &\subset B(0,R + 8r) \setminus B(0,R)\\
                &\subset B(0,2R) \setminus B(0,R)
    \end{align}
    and our claim follows.
    By maximality of the family $(x_i)$, there exists $i$ such that $\abs{y - x_i} < r$ and in turn by (\ref{eq_y2}), $\abs{x - x_i}
    < 10 r$.
    We finally choose $r := \tfrac{1}{10} r_0$.
    The balls $(D_i)$ are given by $D_i := B(x_i,10r) = B(x_i,r_0)$.
\end{proof}

\begin{proof}[Proof of Lemma~\ref{lem_technical}]
    We observe that any ball $B \subset B_R$ and for $p \geq 1$,
    \begin{align}
        \int_{B} \! v^p \dm\LL^n    &= \int_0^\infty \LL^n(B \cap \set{v^p > t }) \dm t\\
                                    &= p \int_0^\infty s^{p - 1} \LL^n(B \cap \set{v > s}) \dm s
    \end{align}
    and for $M \geq 1$,
    \begin{align}
        \begin{split}
        &p \int_1^\infty s^{p - 1} \LL^n(B \cap \set{v > s}) \dm s\\
        &\qquad \leq p \sum_{h = 0}^\infty \int_{M^h}^{M^{h+1}} s^{p - 1} \LL^n(B \cap \set{v > s}) \dm s
        \end{split}\\
        &\qquad \leq p \sum_{h = 0}^\infty \left(\int_{M^h}^{M^{h+1}} s^{p - 1} \dm s\right) \LL^n(B \cap \set{v > M^h})\\
        &\qquad \leq (M^{p} - 1) \sum_{h = h}^\infty M^{hp} \LL^n(B \cap \set{v > M^h}).
    \end{align}
    Thus, it suffices to prove that there exists $N > M \geq 1$, $C \geq 1$ (depending on $n$, $C_0$) such that for all $h \geq 0$,
    \begin{equation}
        \LL^n(\tfrac{1}{2}B_R \cap \set{v > M^h}) \leq C R^n N^{-h}
    \end{equation}
    and then take $p > 1$ such that $M^p N^{-1} < 1$.

    To simplify the notations, we change the constant $C_0$ so that (ii) yields that $40 B(y,C_0^{-1} r) \subset B(x,r)$ and that $K$ is $C^{1,\alpha}$-regular in $4 B(y, C_0^{-1}r)$.

    Let $M := \max\set{4 C_0, \tfrac{1}{4} C_0^2} \geq 4$.
    We define for $h \geq 1$,
    \begin{equation}
        A_h := \set{x \in \tfrac{1}{2}B_R \setminus K | v > M^h}.
    \end{equation}
    The proof is based on the fact that $A_h$ is at distance $\sim M^{-h} R$ from $K$ and has many holes of size $\sim M^{-h} R$ near $K$.
    We justify more precisely these observations.
    For the first one, let $h \geq 1$, let $x \in A_h$ and assume that $B(x, C_0 M^{-h} R)$ is disjoint from $K$.
    Then we use property (ii) to estimate
    \begin{equation}
        v(x) \leq M^h.
    \end{equation}
    This contradicts the definition of $A_h$.
    We deduce that there exists $y \in K$ such that $\abs{x - y} < C_0 M^{-h} R$.
    For the second observation, let $h \geq 2$, let $x \in K \cap \tfrac{15}{16} B_R$ and apply the porosity property to the ball $B(x, M^{-h} R)$.

    We obtain an open ball $B \subset X$ centered in $K$, of radius $C_0^{-1} M^{-h} R$ and such that $K$ is $C^{1,\alpha}$-regular in $4B$.
    Then by point (iii) and since $M \geq \tfrac{1}{4} C_0^2$,
    \begin{equation}
        \sup_{2B} v \leq \tfrac{1}{4} C_0^2 M^h \leq M^{h+1}
    \end{equation}
    In particular, $2B$ is disjoint from $A_{h+1}$.

    We start the proof by defining for $h \geq 1$,
    \begin{align}
        r(h)     &:= M^{-h}R\\
        R(h)     &:= \left(\frac{3}{4} + M^{-h+1}\right) R.
    \end{align}
    The sequence $(R(h))$ is decreasing, $\lim_{h \to \infty} R(h) = \tfrac{3}{4}R$ and $R(h+1) + r(h) \leq R(h)$.
    For each $h \geq 1$, we build an index set $I(h)$ and a family of balls $(B_i)_{i \in I(h)}$ as follow.
    First we define $I(1) := \emptyset$ and $(B_i)_{i \in I(1)} := \emptyset$.
    Let $h \geq 2$ be such that $(B_i)_{i \in I(1)}, \ldots, (B_i)_{i \in I(h-1)}$ have been built.
    We assume that the index sets $I(g)$, where $g=1,\ldots,h-1$, are pairwise disjoint.
    We assume that for all $i \in I_g$, the balls $B_i$ have radius $C_0^{-1} r(g) = C_0^{-1} M^{-g} R$.
    We assume that for all indices $i, j \in \bigcup_{g=1}^{h-1} I(g)$ with $i \ne j$, we have that $2 B_i \cap B_j = \emptyset$ and that $K$ is $C^{1,\alpha}$-regular in $2 B_i$.
    Then, we introduce the sets
    \begin{align}
        K_h     &:= K \cap B_{R(h)} \setminus \bigcup_{g=1}^{h-1} \bigcup_{i \in I(g)} B_i\\
        K_h^*   &:= K \cap B_{R(h+1)} \setminus \bigcup_{g=1}^{h-1} \bigcup_{i \in I(g)} B_i.
    \end{align}
    According to Lemma~\ref{lem_covering}, there exists a sequence of open balls $(D_i)_{i \in I(h)}$ centered in $K_h^*$ of radius $r(h) = M^{-h} R$ such that
    \begin{equation}
        K_h^* \subset \bigcup_{i \in I(h)} D_i,
    \end{equation}
    and such that the balls $(20^{-1} D_i)$ are pairwise disjoint and disjoint from $\bigcup_{g=1}^{h-1} \bigcup_{i \in I(g)} B_i$.
    We can assume that index set $I(h)$ is disjoint from the sets $I(g)$, $g=1,\ldots,h-1$.
    Since $R(h+1) + r(h) \leq R(h)$, we observe that the balls $(20^{-1} D_i)$ are included in
    \begin{equation}
        B_{R(h)} \setminus \bigcup_{g=1}^{h-1} \bigcup_{i \in I(g)} B_i.
    \end{equation}
    Next, we apply the porosity to the balls $(D_i)$.
    For each $i \in I(h)$, there exists an open ball $B_i$ centered in $K$, of radius $C_0^{-1} M^{-h} R$ such that $B_i \subset 40^{-1}D_i$, $K$ is $C^{1,\alpha}$-regular in $4 B_i$ and by (iii),
    \begin{equation}
        \sup_{2B_i} v \leq \tfrac{1}{4} C_0^2 M^h \leq M^{h+1}
    \end{equation}
    We should not forget to mention that for all $i \in I(h)$, we have $2B_i \subset 20^{-1} D_i$ so $2B_i$ is disjoint from all the other balls we have built so far.

    Now, we estimate $\LL^n(A_h)$ for $h \geq 1$.
    We show first that the points of $A_h$ cannot be too far from $K_h^*$.
    Let $x \in A_h$.
    We have seen earlier that there exists $y \in K$ such that $\abs{x - y} < C_0 M^{-h} R$.
    We are going to show that $y \in K_h^*$.
    Since $\abs{x} \leq \tfrac{1}{2}R$ and $M \geq 4C_0$, we have
    \begin{equation}
        \abs{y} \leq \tfrac{1}{2}R + C_0 M^{-h}R \leq \tfrac{3}{4}R.
    \end{equation}
    Let us assume that there exists $g=1,\ldots,h-1$ and $i \in I(g)$ such that $y \in B_i$.
    The radius of $B_i$ is $C_0^{-1} M^{-(h-1)} R$ and since $\abs{x - y} < C_0 M^{-h} R$, we have $x \in 2B_i$.
    However $2B_i$ is disjoint from $A_h$ by construction.
    We have shown that $y \in K_h^*$.
    As a consequence, there exists $i \in I(h)$ such that $y \in D_i$.
    The radius of $D_i$ is $r(h) = M^{-h} R$ and $\abs{x - y} < C_0 M^{-h} R$ so
    \begin{equation}
        A_h \subset \bigcup_{i \in I(h)} (1 + C_0)D_i.
    \end{equation}
    This allows to estimate
    \begin{equation}
        \LL^n(A_h) \leq \omega_n (1 + C_0)^n \abs{I(h)} r(h)^n
    \end{equation}
    where $\omega_n$ is the Lebesgue measure of the unit ball.

    Next, we want to control $\abs{I(h)}$.
    The balls $(20^{-1} D_i)_{i \in I(h)}$ are disjoint and included in the set $B(R(h)) \setminus \bigcup_{g=2}^{h-1} \bigcup_{i \in I(g)} B_i$ so by Ahlfors-regularity,
    \begin{align}
        C_0^{-1} 20^{-(n-1)} r(h)^{(n-1)} \abs{I(h)} &\leq \sum_{i \in I(h)} \HH^{n-1}(K \cap 12^{-1} D_i)\\
                                                  &\leq \HH^{n-1}(K_h).
    \end{align}

    We are going to see that $\HH^{n-1}(K_h)$ is bounded from above by a decreasing geometric sequence.
    We have
    \begin{align}
        \HH^{n-1}(K_h^*)    &\leq \sum_{i \in I(h)} \HH^{n-1}(K \cap D_i)\\
                            &\leq C_0 \sum_{i \in I(h)} r(h)^{n-1}\\
                            &\leq C_0^{n+1} \sum_{i \in I(h)} (C_0^{-1} r(h))^{n-1}\\
                            &\leq C_0^{n+1} \sum_{i \in I(h)} \HH^{n-1}(K \cap B_i)\\
                            &\leq C_0^{n+1} \HH^{n-1}(K_h \setminus K_{h+1}).
    \end{align}
    We deduce
    \begin{equation}
    \HH^{n-1}(K_h) \leq C_0^{n+1} \HH^{n-1}(K_h \setminus K_{h+1}) + \HH^{n-1}(K \cap B_{R(h)} \setminus B_{R(h+1)}).
    \end{equation}
    We rewrite this inequality as
    \begin{equation}
        \HH^{n-1}(K_{h+1}) \leq \lambda^{-1} \HH^{n-1}(K_h) + C_0^{-(n+1)} \HH^{n-1}(K \cap B_{R(h)} \setminus B_{R(h+1)})
    \end{equation}
    where $\lambda := C_0^{n+1}(C_0^{n+1} - 1)^{-1} > 1$.
    Then, we multiply both sides of the inequality by $\lambda^{h+1}$:
    \begin{align}
        \begin{split}
            &\lambda^{h+1} \HH^{n-1}(K_{h+1})\\
            &\qquad \leq \lambda^h \HH^{n-1}(K_h) + C_0^{-(n+1)} \lambda^{-h} \HH^{n-1}(K \cap B_{R(h)} \setminus B_{R(h+1)})
        \end{split}\\
            &\qquad \leq \lambda^h \HH^{n-1}(K_h) + \HH^{n-1}(K \cap B_{R(h)} \setminus B_{R(h+1)}).
    \end{align}
    Summing this telescopic inequality, we obtain that for all $h \geq 1$,
    \begin{align}
        \lambda^h \HH^{n-1}(K_h)    &\leq 2\HH^{n-1}(K \cap B_R)\\
                                    &\leq 2 C_0 R^{n-1}.
    \end{align}
    In summary, we have proved that for some constant $C \geq 1$, $\lambda > 1$ (depending on $n$, $C_0$) and for $h \geq 1$
    \begin{equation}
        \LL^n(A_h) \leq C R^n (\lambda M)^{-h}.
    \end{equation}
\end{proof}

\section{Dimension of the singular part}
\textbf{Notation}. The Hausdorff dimension of a set $A \subset \R^n$ is defined by
\begin{equation}
    \dim_{\HH}(A) := \inf \set{s \geq 0 | H^s(A) = 0}.
\end{equation}
We take the convention that for $s < 0$, the term \emph{$\HH^s$-a-e.} means \emph{everywhere} and the inequality \emph{$\dim_{\HH}(A) < 0$} means \emph{$A = \emptyset$}.

The goal of this section is to explain the link between the integrability exponent of the gradient and the dimension of the singular part.
It has been first observed for the Mumford-Shah functional by Ambrosio, Fusco, Hutchinson in~\cite{AFH}.

\begin{thm}\label{thm_dimension}
    Let $u \in \SBV_{\loc}(X)$ be a local minimizer.
    We define
    \begin{equation}
        \Sigma := \set{x \in K | \text{$K$ is not regular at $x$}}.
    \end{equation}
    For $p > 1$ such that $\abs{\nabla u}^2 \in L^p_{\loc}(X)$, we have
    \begin{equation}
        \dim_{\HH}(\Sigma) \leq \max \set{n-p, n-8} < n-1.
    \end{equation}
\end{thm}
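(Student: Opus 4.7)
The plan is to split $\Sigma$ according to whether the Dirichlet energy has positive upper density at the natural scale,
\begin{equation*}
    \Sigma^{+} := \set{x \in \Sigma | \limsup_{r \to 0} \omega_2(x,r) > 0}, \qquad \Sigma^{-} := \Sigma \setminus \Sigma^{+},
\end{equation*}
and bound each piece by an independent method, which will produce the two bounds $n-p$ and $n-8$ respectively.

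For $\Sigma^{+}$ I would run an AFH-type density-of-measure argument. Given $x \in \Sigma^{+}$, pick $c > 0$ and $r_k \downarrow 0$ with $\int_{B(x,r_k)} \abs{\nabla u}^2 \geq c r_k^{n-1}$. Hölder's inequality with exponents $p$ and $p/(p-1)$ applied to $1 \cdot \abs{\nabla u}^2$ gives
\begin{equation*}
    c r_k^{n-1} \leq \left(\int_{B(x,r_k)} \abs{\nabla u}^{2p}\right)^{1/p} \LL^n(B(x,r_k))^{1 - 1/p},
\end{equation*}
hence $\int_{B(x,r_k)} \abs{\nabla u}^{2p} \geq c' r_k^{n - p}$. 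By Theorem~\ref{thm_integrability} the Radon measure $\mu := \abs{\nabla u}^{2p} \LL^n$ is locally finite on $X$, and the previous bound means $\limsup_{r \to 0} \mu(B(x,r))/r^{n - p} > 0$ on $\Sigma^{+}$. Decomposing $\Sigma^{+}$ into the level sets of this $\limsup$ and applying the standard density--mass relation (e.g.\ \cite[Thm.~6.9]{Mattila}) gives $\HH^{n-p}(\Sigma^{+})$ locally finite, so $\dim_{\HH} \Sigma^{+} \leq n - p$.

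For $\Sigma^{-}$ I would pass to a blow-up and invoke classical dimension reduction for minimizing boundaries. At $x \in \Sigma^{-}$ pick $r_k \downarrow 0$ with $\omega_2(x,r_k) \to 0$ and set $u_k(y) := u(x + r_k y)$, with rescaled free boundary $K_k := r_k^{-1}(K - x)$. Using Ahlfors-regularity (Proposition~\ref{prop_af}) and $\SBV$-compactness I would extract a subsequential limit $u_\infty$ which is a local minimizer of the blown-up functional on $\R^n$. Since the Dirichlet energy on every bounded set tends to $0$ in the limit, $\nabla u_\infty \equiv 0$ outside $K_\infty$; after normalizing the constant values of $u_\infty$ the minimality collapses to minimality of the perimeter of $\set{u_\infty > 0}$, so $K_\infty$ is a minimizing boundary in the De Giorgi sense. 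Scaling invariance of $\varepsilon$-regularity (Theorem~\ref{thm_regularity}) forces $0$ to remain a singularity of $K_\infty$, and Federer's dimension reduction together with the Simons classification of minimizing cones in ambient dimension $\leq 7$ then gives $\dim_{\HH} \Sigma^{-} \leq n - 8$. Combining the two estimates yields the theorem, and the strict inequality $< n - 1$ comes from $p > 1$ and $8 > 1$.

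The main obstacle is the blow-up analysis for $\Sigma^{-}$: one must verify that $\SBV$-compactness of local minimizers preserves both the minimality and Ahlfors-regularity under the limit, that the degeneration $\nabla u_\infty \equiv 0$ really does reduce the problem to perimeter minimization on $K_\infty$ so that Federer--Simons becomes applicable, and that the singular point $0$ persists under blow-up. The $\Sigma^{+}$ bound is by comparison a short Hölder plus Frostman-type computation given the higher integrability provided by Theorem~\ref{thm_integrability}.
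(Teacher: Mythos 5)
Your decomposition $\Sigma = \Sigma^{+} \cup \Sigma^{-}$ according to whether $\omega_2(x,r)$ vanishes as $r \to 0$ is exactly the one the paper uses, and your $\Sigma^{+}$ argument (H\"older plus a density--mass estimate for the measure $\mu := \abs{\nabla u}^{2p}\LL^n$) is precisely the content of the paper's Lemma~\ref{lem_int}, which yields that for $\HH^{n-p}$-a.e.\ $x \in X$ one has $\lim_{r\to 0}\omega_2(x,r)=0$. One small improvement: the density--mass bound actually gives $\HH^{n-p}(\Sigma^{+})=0$, not merely local finiteness, since $\mu$ is absolutely continuous with respect to $\LL^n$ and any set of finite $\HH^{n-p}$ measure with $n-p<n$ is $\LL^n$-null and hence $\mu$-null. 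The genuine divergence, and the gap, is in $\Sigma^{-}$. The paper does no blow-up analysis at all; it simply cites \cite[Theorem~8.2]{CK}, which already states that the set of points of $\Sigma$ at which $\omega_2(x,r)\to 0$ has Hausdorff dimension at most $n-8$. Your sketch for $\Sigma^{-}$ is in effect a plan to reprove that theorem, and, as you acknowledge, it leaves substantial pieces unverified: $\SBV$-compactness and the passage of local minimality to the blow-up limit, the rigidity step identifying the degenerate limit ($\nabla u_\infty \equiv 0$) with a perimeter-minimizing configuration to which Federer dimension reduction and the Simons classification apply, and the persistence of the singularity at $0$. These are nontrivial and are exactly what Caffarelli--Kriventsov establish; you should invoke their result rather than attempt to re-derive it. With that citation substituted for your blow-up sketch, your plan coincides with the paper's proof.
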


\begin{rmk}\label{rmk_dimension}
    In dimension $n \leq 7$, Caffarelli--Kriventsov have shown that if a point $x \in K$ is at the boundary of two local connected components where $u > 0$ or if it is a $0$-density point of $\set{u=0}$, then $x$ is a regular point (\cite[Theorem 8.2]{CK}). In dimension $n=2$, they show furthermore that if $x$ is at the boundary of a connected component of $\set{u=0}$, then it is a regular point (\cite[Corollary 9.2]{CK}). Thus in the planar case, a point of $\Sigma$ must be an acculumation point of connected components of $\set{u=0}$. There is however no known example of such a situation.
\end{rmk}

Theorem~\ref{thm_dimension} will be proved very easily with the help of \cite[Theorem 8.2]{CK} and the following well-known result.

\begin{lem}\label{lem_int}
    Let $v \in L^p_{\loc}(X)$ for some $p \geq 1$ and let $s < n$.
    Then, for $\HH^{n - p (n - s)}$-a.e.\ $x \in X$,
    \begin{equation}
        \lim_{r \to 0} r^{-s} \int_{B(x,r)} \! v \dm\LL^n = 0.
    \end{equation}
\end{lem}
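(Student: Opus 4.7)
The plan is to reduce the statement to a classical $L^1$ density lemma by Hölder's inequality, and then prove that $L^1$ version by approximation together with a Vitali-type covering argument that controls the Hausdorff $t$-content of the relevant bad set.

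First I would apply Hölder's inequality on $B(x,r)$ with exponents $p$ and $p/(p-1)$ to write
\begin{equation*}
    \int_{B(x,r)} v \, \dm\LL^n \leq \omega_n^{1-1/p}\, r^{n(p-1)/p} \left(\int_{B(x,r)} v^p \, \dm\LL^n\right)^{1/p}.
\end{equation*}
Setting $t := n - p(n-s)$, a direct computation gives $n(p-1)/p - s = -t/p$, so that
\begin{equation*}
    \left(r^{-s} \int_{B(x,r)} v \, \dm\LL^n\right)^p \leq \omega_n^{p-1}\, r^{-t} \int_{B(x,r)} v^p \, \dm\LL^n.
\end{equation*}
This reduces the lemma to the statement that for $w := v^p \in L^1_{\loc}(X)$ and $t < n$, one has $r^{-t}\int_{B(x,r)} w \, \dm\LL^n \to 0$ as $r \to 0$ for $\HH^t$-a.e.\ $x \in X$.

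When $t \leq 0$, the stated convention forces the claim to hold everywhere; this is immediate because $\int_{B(x,r)} w \, \dm\LL^n \to 0$ by absolute continuity of the integral, while $r^{-t}$ is bounded (or tends to $0$). Assume therefore $0 < t < n$. Since the conclusion is local, I would reduce to $w \in L^1(\R^n)$ and decompose $w = g + h$ with $g$ continuous of compact support and $\|h\|_{L^1} \leq \eta$ for a parameter $\eta > 0$ to be sent to $0$. For the continuous part, $r^{-t}\int_{B(x,r)} g \, \dm\LL^n \leq \omega_n \|g\|_\infty r^{n-t} \to 0$, so $g$ contributes nothing to the limsup.

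The core step is a weak-type estimate for the remainder. Consider
\begin{equation*}
    E_\lambda := \left\{x \in X \ \bigg| \ \limsup_{r \to 0}\, r^{-t} \int_{B(x,r)} h \, \dm\LL^n > \lambda\right\}.
\end{equation*}
For each $x \in E_\lambda$ and each $\delta > 0$, I would select a radius $r_x \in (0,\delta)$ with $r_x^t < \lambda^{-1} \int_{B(x,r_x)} h \, \dm\LL^n$, then apply Vitali's $5r$-covering lemma to extract a disjoint subfamily $(B(x_i, r_i))_i$ such that $E_\lambda \subset \bigcup_i B(x_i, 5 r_i)$. Summing $(5 r_i)^t \leq 5^t \lambda^{-1} \int_{B(x_i,r_i)} h \, \dm\LL^n$ over the disjoint subfamily bounds the Hausdorff $t$-content $\HH^t_{10\delta}(E_\lambda) \leq 5^t \eta / \lambda$. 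Letting $\delta \to 0$, then $\eta \to 0$, gives $\HH^t(E_\lambda) = 0$, and taking a countable union over $\lambda = 1/k$ concludes the proof. The main technical point is precisely this Vitali-content estimate for the $t$-dimensional maximal operator, which is a standard tool in geometric measure theory (e.g.\ in the vein of \cite{Mattila}) and is the only nontrivial ingredient in the argument.
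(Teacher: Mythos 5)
Your reduction to the $L^1$ case via H\"older's inequality is exactly the one in the paper (the exponent bookkeeping $n(p-1)/p - s = -t/p$ with $t = n - p(n-s)$ is correct), and your treatment of $t \leq 0$ matches the stated convention. Where you genuinely diverge is in the core $L^1$ step. The paper does not approximate by continuous functions: it sets $\mu := v\,\LL^n$, invokes the density comparison theorem \cite[Theorem 2.56]{Ambrosio} to get $\mu(A) \geq \lambda \HH^t(A)$ for the bad set $A$, and then runs a short bootstrap --- $\mu(A) < \infty$ forces $\HH^t(A) < \infty$, hence $\LL^n(A) = 0$ since $t < n$, hence $\mu(A) = 0$ by absolute continuity, hence $\HH^t(A) = 0$. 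You instead split $w = g + h$ with $g$ continuous and $\|h\|_{L^1} \leq \eta$, and re-derive the content bound $\HH^t_{10\delta}(E_\lambda) \leq 5^t \eta / \lambda$ by hand with the Vitali $5r$-covering lemma; since the bad set for $w$ is independent of $\eta$ and contained in $E_\lambda$ (with $|h|$ in place of $h$ --- you should take absolute values there, and assume $v \geq 0$ from the start as the paper does), letting $\eta \to 0$ kills its $\HH^t$-measure. Both arguments are correct. Yours is self-contained, essentially unpacking the covering proof of the cited density theorem and substituting the approximation step for the paper's absolute-continuity bootstrap; the paper's is shorter because it leans on the reference and avoids any approximation.
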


\begin{proof}
    Without loss of generality, we assume $v \geq 0$.
    We start with the case $p=1$.
    We define $\mu$ as the measure $v \LL^n$ and we want to show that for $\HH^s$-a.e.\ $x \in X$, we have
    \begin{equation}
        \lim_{r \to 0} r^{-s} \mu(B(x,r)) = 0.
    \end{equation}
    If $s < 0$, the limit is indeed $0$ for every $x \in X$.
    In the case $0 \leq s < n$, we fix a closed ball $\overline{B} \subset X$, a scalar $\lambda > 0$ and a set
    \begin{equation}
        A := \set{x \in \overline{B} | \limsup_{r \to 0} r^{-s} \mu(B(x,r)) > \lambda}.
    \end{equation}
    According to~\cite[Theorem 2.56]{Ambrosio},
    \begin{equation}\label{eq_muA}
        \mu(A) \geq \lambda H^s(A).
    \end{equation}
    As $A \subset \overline{B}$ and $\mu$ is a Radon measure, we have $\mu(A) < \infty$.
    Then (\ref{eq_muA}) gives $H^s(A) < \infty$ and since $s < n$, $\LL^n(A) = 0$.
    The measure $\mu$ is dominated by $\LL^n$ so $\mu(A) = 0$ and now (\ref{eq_muA}) gives $H^s(A) = 0$.
    We can take a sequence of scalars $\lambda_k \to 0$ to deduce
    \begin{equation}
        \HH^s(\set{x \in \overline{B} | \limsup_{r \to 0} r^{-s} \mu(B(x,r)) > 0}) = 0.
    \end{equation}
    We can then conclude that
    \begin{equation}
        \HH^s(\set{x \in X | \limsup_{r \to 0} r^{-s} \mu(B(x,r)) > 0}) = 0.
    \end{equation}
    by covering $X$ with a a sequence of closed balls $\overline{B_k} \subset X$.

    Now we come to the general case $p \geq 1$.
    Let us fix $t < n$.
    For $x \in X$ and for $r > 0$, the Hölder inequality shows that
    \begin{equation}
        r^{-\left(n - \frac{n}{p}\right)} \int_{B(x,r)} \! v \dm\LL^n \leq \left(\int_{B(x,r)} \! v^p \dm\LL^n\right)^{\frac{1}{p}}
    \end{equation}
    so
    \begin{equation}
        r^{-\left(n + \frac{t}{p} - \frac{n}{p}\right)} \int_{B(x,r)} \! v \dm\LL^n \leq \left(r^{-t} \int_{B(x,r)} \! v^p \dm\LL^n\right)^{\frac{1}{p}}.
    \end{equation}
    We apply the first part to see that for $\HH^t$-a.e.\ $x \in X$,
    \begin{equation}
        \lim_{r \to 0} r^{-\left(n + \frac{t}{p} - \frac{n}{p}\right)} \int_{B(x,r)} \! v \dm\LL^n = 0.
    \end{equation}
    The scalar $t$ such that $s = n + \frac{t}{p} - \frac{n}{p}$ is $t := n - p(n-s) < n$.
\end{proof}

\begin{proof}[Proof of Theorem~\ref{thm_dimension}]
    According to Lemma~\ref{lem_int}, we have for $\HH^{n-p}$-a.e.\ $x \in X$,
    \begin{equation}
        \lim_{r \to 0} \omega_2(x,r) = 0
    \end{equation}
    and according to~\cite[Theorem 8.2]{CK}, the set
    \begin{equation}
        \Set{x \in X \cap \Sigma | \lim_{r \to 0} \omega_2(x,r) = 0}
    \end{equation}
    has a Hausdorff dimension $\leq n-8$.
\end{proof}

\begin{appendices}\label{appendix}

    \section{Generalities about \texorpdfstring{$\BV$} functions}\label{appendix_bv}

    We recall a few definitions and results from the theory of $\BV$ functions (\cite{Ambrosio}).
    We work in an open set $X$ of the Euclidean space $\R^n$ ($n > 1$).
    When a point $x \in X$ is given, we abbreviate the open ball $B(x,r)$ as $B_r$.

    Let $u \in L^1_{loc}(X)$.
    The \emph{upper and lower approximate limit} of $u$ at at a point $x \in X$ are defined by
    \begin{align}
        \overline{u}(x)     &:= \inf \set{t \in \R | \lim_{r \to 0} r^{-n} \int_{\set{u > t} \cap B_r} \! (u - t) \dm\LL^n = 0},\\
        \underline{u}(x)    &:= \sup \set{t \in \R | \lim_{r \to 0} r^{-n} \int_{\set{u < t} \cap B_r} \! (t - u) \dm\LL^n = 0}.
    \end{align}
    The functions $\overline{u}, \underline{u}\colon X \to \overline{\R}$ are Borel and satisfies $\underline{u} \leq \overline{u}$.
    We have two examples in mind.
    We say that $x$ is a \emph{Lebesgue point} if there exists $t \in \R$ such that
    \begin{equation}
        \lim_{r \to 0} \fint_{B_r} \! \abs{u - t} \dm\LL^n = 0.
    \end{equation}
    We then have $\underline{u}(x) = \overline{u}(x) = t$ and we denote $t$ by $\tilde{u}(x)$.
    The set of non-Lebesgue points $x \in X$ is called the \emph{singular set} $S_u$.
    Both the set $S_u$ and the function $X \setminus S_u \to \R, x \mapsto \tilde{u}(x)$ are Borel (\cite[Proposition 3.64]{Ambrosio}).
    The Lebesgue differentiation theorem states that for $\LL^n$-a.e.\ $x \in X$, we have $x \in X \setminus S_u$ and $u(x) = \tilde{u}(x)$.
    We say that $x$ is a \emph{jump point} if there exist two real numbers $s < t$ and a (unique) vector $\nu_u(x) \in \mathbf{S}^{n-1}$ such that
    \begin{subequations}
        \begin{align}
    &\lim_{r \to 0} \fint_{H^+ \cap B_r} \! \abs{u(y) - s} \dm\LL^n(y) = 0\\
    &\lim_{r \to 0} \fint_{H^- \cap B_r} \! \abs{u(y) - t} \dm\LL^n(y) = 0,
        \end{align}
    \end{subequations}
    where
    \begin{subequations}
        \begin{align}
            H^+ &:= \set{y \in \R^n | (y - x) \cdot \nu_u(x) > 0}\\
            H^- &:= \set{y \in \R^n | (y - x) \cdot \nu_u(x) < 0}.
        \end{align}
    \end{subequations}
    We then have $\overline{u}(x) = t$ and $\underline{u}(x) = s$.
    The set of jump points $x \in X$ is called the \emph{jump set} $J_u$.
    Both the set $J_u$ and the function $J_u \to \mathbf{S}^{n-1}, x \mapsto \nu_u(x)$ are Borel (\cite[Proposition 3.69]{Ambrosio}).

    Here we summarize~\cite[Proposition 3.76, 3.78]{Ambrosio}.
    Let $u \in \BV(X)$.
    The singular set $S_u$ is $\HH^{n-1}$ rectifiable and $\HH^{n-1}(S_u \setminus J_u) = 0$.
    According to the Besicovitch derivation theorem, we can write
    \begin{equation}
        Du = D^a u + D^s u
    \end{equation}
    where $D^a u$ is the \emph{absolutely continuous} part of $Du$ with respect to $\LL^n$ and $D^s u$ is the \emph{singular part} of $Du$ with respect to $\LL^n$.
    As a consequence, there exists a unique vector-valued map $\nabla u \in L^1(X;R^n)$, the \emph{approximate gradient}, such that $D^a u = \nabla u \LL^n$.
    The measures $\LL^n$ and $\norm{D^s u}$ are mutually singular which means that there exists a Borel set $S \subset X$ such that
    \begin{equation}\label{eq_mutually_singular}
        \LL^n(S) = \norm{D^s u}(\R^n \setminus S) = 0.
    \end{equation}
    A candidate for $S$ could be $S_u$ but $S$ may not be a $(n-1)$ dimensional set.
    We can write
    \begin{equation}
        D^s u = Du^s \mres S_u + D^s u \mres (X \setminus S_u)
    \end{equation}
    where $D^s u \mres S_u$ is the \emph{jump part} and $D^s u \mres (X \setminus S_u)$ is the \emph{Cantor part}.
    The jump part has an explicit formula,
    \begin{equation}
        D^s u \mres S_u = (\overline{u}-\underline{u}) \nu_u \HH^{n-1} \mres J_u,
    \end{equation}
    whereas the Cantor part vanishes on $\HH^{n-1}$ $\sigma$-finite sets $B \subset X$ (and not only $B := S_u$).
    Remark that $Du$ always vanishes on $\HH^{n-1}$ negligible sets.
    Finally, we define $\SBV(X)$ as the subspace of functions $u \in \BV(X)$ whose Cantor part is zero, that is
    \begin{equation}
        \norm{D^s u}(X \setminus S_u) = 0.
    \end{equation}

    For $u \in \BV(X)$ to be $\SBV$, it suffices that there exists a $\HH^{n-1}$ $\sigma$-finite set $K \subset X$ such that $\norm{D^s u} (X \setminus K) = 0$.
    Since $\norm{D^s u}$ and $\LL^n$ are mutually singular, this also amounts to say the measure $\norm{Du} \mres (X \setminus K)$ is dominated by $\LL^n$.
    A natural way to build $\SBV(X)$ functions is to have a pair $(u,K)$ where $K \subset X$ is relatively closed, $\HH^{n-1}$ locally finite and $u \in W^{1,1}(X \setminus K)$.

    A set of finite perimeter in $X$ is a Borel set $E \subset X$ such that $\mathbf{1}_E \in \BV(X)$.
    The singular set of $\mathbf{1}_E$ is called \emph{essential boundary} or \emph{measure-theoretic boundary} and denoted by $\partial_M E$.
    The jump set of $E$ is denoted by $\partial^* E$.
    One can see that $\overline{\mathbf{1}_E}, \underline{\mathbf{1}_E} \in \set{0,1}$ everywhere on $X$.
    Thus, if $x$ is a Lebesgue point of $\mathbf{1}_E$, we have either $\lim_{r \to 0} r^{-n} \LL^n((X \setminus E) \cap B_r) = 0$ or $\lim_{r \to 0} r^{-n} \LL^n(E \cap B_r) = 0$.
    The essential boundary $\partial_M E$ can be reformulated as the set of points $x \in X$ such that
    \begin{subequations}
        \begin{align}
            &\limsup_{r \to 0} r^{-n} \LL^n((X \setminus E) \cap B_r) > 0\\
            &\limsup_{r \to 0} r^{-n} \LL^n(E \cap B_r) > 0.
        \end{align}
    \end{subequations}
    Similarly, the jump set $\partial^* E$ can be reformulated as the set of points $x \in X$ for which there exists a (unique) vector $n_E(x) \in \mathbf{S}^{n-1}$ such that
    \begin{equation}
        \lim_{r \to 0} r^{-n} \LL^n((E \Delta H^+) \cap B_r) = 0
    \end{equation}
    where
    \begin{equation}
        H^+ := \set{y \in \R^n | (y - x) \cdot n_E(x) > 0}.
    \end{equation}
    The vector $n_E(x)$ is called the \emph{measure-theoretic inner normal} to $E$ at $x$.
    We have the inclusions $\partial^* E \subset \partial_M E \subset \partial E$.
    The measure $D\mathbf{1}_E$ has no absolutely continuous part, neither Cantor part; it is given by the formula
    \begin{equation}
        D\mathbf{1}_E = n_E \HH^{n-1} \mres \partial^* E.
    \end{equation}

    \section{A Robin problem}\label{appendix_robin}

    \subsection{Statement}

    We work in the Euclidean space $\R^n$ ($n > 1$). For $r > 0$, $B_r$ denotes the ball of radius $r$ and centered at $0$. We fix a radius $0 < R \leq 1$, an exponent $0 < \alpha \leq 1$, a constant $A > 0$ and a $C^{1,\alpha}$ function $f\colon \R^{n-1} \cap B_R \to \R$ such that $f(0) = 0$, $\nabla f(0) = 0$ and $R^\alpha \left[\nabla f\right]_\alpha \leq A$. We introduce
    \begin{align}
        V_R         &:= \set{x \in B_R | x_n > f(x')}\\
        \Gamma_R    &:= \set{x \in B_R | x_n = f(x')}.
    \end{align}
    We denote by $\nu$ the normal vector field to $\Gamma_R$ going upward.
    For $0 < t \leq 1$, we write $t V_R$ for $V_R \cap B_t$ and $t \Gamma_R$ for $\Gamma_R \cap B_t$.
    For $u \in \BV(V_R)$, we denote by $u^*$ the trace of $u$ in $L^1(\partial V_R)$.
    It is characterized by the property that for $\HH^{n-1}$-a.e.\ $x \in \partial V_R$,
    \begin{equation}\label{eq_char_trace}
        \lim_{r \to 0} r^{-n} \int_{V_R \cap B(x,r)} \abs{u - u^*(x)} \dm\LL^n = 0.
    \end{equation}
    We denote by $W^{1,2}_0(V_R \cup \Gamma_R)$ the space of functions $u \in W^{1,2}(V_R)$ such that $u^* = 0$ on $\partial V_R \setminus \Gamma_R$.
    Our object of study are the functions $u \in W^{1,2}(V_R) \cap L^\infty(V_R)$ which are weak solutions of
    \begin{equation}\label{eq_robin_problem}
        \left\{
            \begin{array}{lcl}
                \Delta u            & = & 0 \quad \text{in $V_R$}\\
                \partial_{\nu}u - u & = & 0 \quad \text{in $\Gamma_R$},
            \end{array}
        \right.
    \end{equation}
    that is for all $v \in W^{1,2}_0(V_R \cup \Gamma_R)$,
    \begin{equation}
        \int_{V_R} \! \langle \nabla u, \nabla v \rangle \dm\LL^n + \int_{\Gamma_R} \! u^*v^* \dm\HH^{n-1} = 0.
    \end{equation}

    According to Weyl's lemma, $u$ coincide almost-everywhere in $V_R$ with an harmonic functions.
    We replace $u$ by this harmonic representative so that $u$ is pointwise defined and smooth in $V_R$.
    Our goal is to prove the following estimate.
    \begin{lem}\label{lem_robin_estimate}
        There exists $C \geq 1$ (depending on $n$, $\alpha$, $A$) such that
        \begin{equation}
            \abs{\nabla u}_\infty \leq C \left(\fint_{V_R} \abs{\nabla u}^2 \dm\LL^n\right)^{\frac{1}{2}} + C \abs{u}_\infty,
        \end{equation}
        where the left-hand side is computed on $\tfrac{1}{2} V_R$.
    \end{lem}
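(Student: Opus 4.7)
The plan is to straighten the boundary $\Gamma_R$ and reduce the question to standard boundary Schauder estimates for an oblique derivative problem on a flat boundary.

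First, I would use the change of variables $\Phi(y', y_n) := (y', y_n + f(y'))$, which maps the upper half-ball $B_R^+ := B_R \cap \{y_n > 0\}$ onto $V_R$ and the flat disc $B_R \cap \{y_n = 0\}$ onto $\Gamma_R$. The hypotheses $\nabla f(0) = 0$, $R^\alpha [\nabla f]_\alpha \leq A$, and $R \leq 1$ yield $\abs{\nabla f}_\infty \leq A$, so $\Phi$ is a $C^{1,\alpha}$ bi-Lipschitz diffeomorphism whose norms depend only on $n, \alpha, A$. Setting $v := u \circ \Phi$, a direct change of variables in the weak formulation of \eqref{eq_robin_problem} produces the divergence-form problem
\begin{equation*}
    \sum_{i,j} \partial_i(a_{ij} \partial_j v) = 0 \text{ in } B_R^+, \qquad \sum_{j} a_{nj} \partial_j v = b\, v \text{ on } B_R \cap \{y_n = 0\},
\end{equation*}
where $(a_{ij}) := \abs{\det D\Phi}\, (D\Phi^{-1} D\Phi^{-T})_{ij}$ is uniformly elliptic with $C^{0,\alpha}$ entries, and $b \in C^{0,\alpha}$ is the positive scalar encoding the area element of $\Gamma_R$ pulled back through $\Phi$. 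All ellipticity constants and Hölder norms depend only on $n, \alpha, A$.

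Next, I would invoke the classical boundary $C^{1,\beta}$ Schauder theory for oblique derivative problems (as in Gilbarg--Trudinger or Lieberman). Uniform ellipticity makes the conormal field $(a_{n1}, \ldots, a_{nn})$ transverse to $\{y_n = 0\}$, so we lie in the regular oblique framework with a Hölder zeroth-order boundary term $-b$. A boundary De Giorgi--Nash--Moser estimate first gives a quantitative $C^{0,\beta}$ bound on $v$ in terms of $\abs{v}_\infty$; this forces the boundary datum $bv$ to be $C^{0,\beta}$, and the boundary Schauder estimate then upgrades the solution to $C^{1,\beta}(\overline{B_{R/2}^+})$. A Caccioppoli step interpolated between $L^2$ and $L^\infty$ at the gradient level produces
\begin{equation*}
    \abs{\nabla v}_{L^\infty(B_{R/2}^+)} \leq C\left( R^{-n/2} \abs{\nabla v}_{L^2(B_R^+)} + \abs{v}_{L^\infty(B_R^+)} \right).
\end{equation*}

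Finally, I would transform back through $\Phi^{-1}$. Uniform bounds on $\abs{D\Phi}$ and $\abs{D\Phi^{-1}}$ transfer all norms up to constants depending only on $n, \alpha, A$, and $\LL^n(V_R)$ is comparable to $R^n$ for the same reason, so the factor $R^{-n}$ rewrites as $\LL^n(V_R)^{-1}$ and the claim follows. The main obstacle is the bookkeeping: verifying that every constant arising from the Schauder step depends only on $n, \alpha, A$ and not on the specific $f$. This reduces to the stability of the boundary Schauder estimates under bounds on the $C^{0,\alpha}$-norm of $(a_{ij})$ and $b$, which are in turn controlled by $[\nabla f]_\alpha$. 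The sign of $b$ is immaterial at the regularity level (only for well-posedness), so there is no obstruction from the Robin coefficient appearing on the right-hand side after pull-back.
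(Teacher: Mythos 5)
Your architecture is sound and in fact parallels the paper's quite closely: both proofs run boundary H\"older continuity $\to$ Schauder upgrade of the Robin condition to a Neumann condition with H\"older datum $\to$ an $L^2$-based control of the oscillation. The genuine differences are in execution. You flatten the boundary with the shear $\Phi(y',y_n)=(y',y_n+f(y'))$ and then quote De Giorgi--Nash--Moser and conormal Schauder theory for divergence-form operators with $C^{0,\alpha}$ coefficients; the paper instead stays on the curved domain, proves the boundary H\"older continuity from scratch via a weak Harnack inequality built on an explicit paraboloid barrier and a maximum principle (precisely because the authors could not locate a reference in the form they needed), and then invokes Li--Zhang for the Schauder step and Kim for local boundedness of Neumann problems. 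Your route is legitimate and arguably shorter if the references for the flat conormal problem are accepted; its cost is the bookkeeping you already identified (the image of the half-ball under $\Phi$ is not exactly $V_R$, so one must shrink domains, and all constants must be traced through $\abs{\nabla f}_\infty\leq A$ and $R^\alpha[\nabla f]_\alpha\leq A$). Your remark that the sign of $b$ is irrelevant is correct, and here $b>0$ anyway, so the form is coercive.

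The one place where your write-up is not yet a proof is the final displayed estimate. A boundary Schauder estimate for the problem with datum $g:=bv$ gives, in the first instance, $\abs{\nabla v}_{L^\infty(B^+_{R/2})}\leq CR^{-1}\abs{v}_\infty+C\abs{g}_{C^{0,\beta}}$, and the term $R^{-1}\abs{v}_\infty$ is \emph{not} controlled by $R^{-n/2}\norm{\nabla v}_{L^2}+\abs{v}_\infty$; this is exactly why the paper states its Schauder lemma with $R^{-1}\textrm{osc}(u)$ rather than $R^{-1}\abs{u}_\infty$ on the right. To reach your displayed inequality you must (i) subtract the mean $m$ of $v$, noting that $w:=v-m$ solves the same interior equation with the \emph{same} inhomogeneous Neumann datum $g=bv$ (still bounded by $C\abs{u}_\infty$), so Schauder yields $\abs{\nabla v}_\infty\leq CR^{-1}\abs{v-m}_\infty+C\abs{u}_\infty$; (ii) apply a local boundedness ($L^\infty$--$L^2$) estimate for the inhomogeneous conormal problem to get $\abs{v-m}_{L^\infty}\leq C\bigl(\fint\abs{v-m}^2\bigr)^{1/2}+CR\abs{u}_\infty$; and (iii) conclude with the Poincar\'e inequality $\bigl(\fint\abs{v-m}^2\bigr)^{1/2}\leq CR\bigl(\fint\abs{\nabla v}^2\bigr)^{1/2}$. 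This chain is the content of the paper's Lemmas~\ref{lem_robin_schauder} and~\ref{lem_robin_boundedness}; your phrase ``a Caccioppoli step interpolated between $L^2$ and $L^\infty$ at the gradient level'' does not identify these ingredients, and in particular omits the mean subtraction without which the argument fails. Fill in that step and the proof is complete.
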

    Although well known to experts, we present an elementary proof because we have not found a satisfactory reference with the estimate as asserted.
    It will be an immediate consequence of Lemma~\ref{lem_robin_schauder} and Lemma~\ref{lem_robin_boundedness}.

    According to~\cite{Zhang}, the viscosity solutions of such a problem are pointwise $C^{1,\alpha}$ up to the boundary.
    The viscosity approach is based on the maximum principle but it is easy to prove a maximum principle for the weak solutions of our problem and thus to follow the ideas of~\cite{Zhang}.
    Once we get that $u$ is $C^\alpha$ up to the boundary, the Robin boundary condition can be written as a Neumann boundary condition with a $C^{\alpha}$ right-hand side and we will apply the usual estimates for Neumann problems.

    \begin{lem}[Maximum principle]\label{lem_maximum_principle}
        Let $u \in W^{1,2}(V_R)$ be a weak solution of
        \begin{equation}\label{eq_maximum_principle0}
            \left\{
                \begin{array}{lcll}
                    \Delta u            & \geq 0 &\text{in $V_R$}\\
                    \partial_{\nu}u - u & \geq 0 &\text{in $\Gamma_R$},
                \end{array}
            \right.
        \end{equation}
        that is, for all non-negative function $v \in W^{1,2}_0(V_R \cup \Gamma_R)$,
        \begin{equation}\label{eq_maximum_principle1}
            \int_{V_R} \! \langle \nabla u, \nabla v \rangle \dm\LL^n + \int_{\Gamma_R} \! u^* v^* \dm \HH^{n-1} \leq 0.
        \end{equation}
        If $u^* \leq 0$ on $\partial V_R \setminus \Gamma_R$, then $u \leq 0$ on $V_R$.
    \end{lem}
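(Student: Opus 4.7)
The plan is to test the weak inequality against the Stampacchia truncation $v := u^+ = \max(u,0)$, which is the standard way to derive a maximum principle in the weak Sobolev setting.

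First I would verify that $v$ is an admissible test function in~(\ref{eq_maximum_principle1}). By the chain rule for Sobolev functions one has $u^+ \in W^{1,2}(V_R)$ with $\nabla u^+ = \mathbf{1}_{\{u>0\}} \nabla u$, and the trace operator commutes with the truncation in the sense that $(u^+)^* = (u^*)^+$ for $\HH^{n-1}$-a.e.\ point of $\partial V_R$ where the trace exists. By hypothesis $u^* \leq 0$ on $\partial V_R \setminus \Gamma_R$, so $(u^+)^*$ vanishes $\HH^{n-1}$-a.e.\ there, which gives $u^+ \in W^{1,2}_0(V_R \cup \Gamma_R)$. Obviously $u^+ \geq 0$.

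Plugging $v = u^+$ into~(\ref{eq_maximum_principle1}) and using that $\langle \nabla u, \nabla u^+\rangle = \abs{\nabla u^+}^2$ $\LL^n$-a.e.\ (the product vanishes on $\{u \leq 0\}$ and equals $\abs{\nabla u}^2$ on $\{u > 0\}$), while on $\Gamma_R$ one has $u^* (u^+)^* = ((u^*)^+)^2 \geq 0$, yields
\begin{equation}
    \int_{V_R} \abs{\nabla u^+}^2 \dm\LL^n \leq \int_{V_R} \abs{\nabla u^+}^2 \dm\LL^n + \int_{\Gamma_R} ((u^*)^+)^2 \dm\HH^{n-1} \leq 0.
\end{equation}
Hence $\nabla u^+ = 0$ $\LL^n$-a.e.\ in $V_R$, so $u^+$ is constant on each connected component of $V_R$.

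The last step is to identify this constant as $0$. The set $V_R$ is connected (it is the intersection of the open epigraph of the continuous function $f$ with the open ball $B_R$, both of which are connected and share interior points). Its boundary piece $\partial V_R \setminus \Gamma_R$ is nonempty (it contains the open spherical cap $\partial B_R \cap \{x_n > f(x')\}$) and on it the trace $(u^+)^*$ vanishes; since a constant Sobolev function with vanishing trace on a set of positive $\HH^{n-1}$-measure must be zero, $u^+ \equiv 0$, i.e.\ $u \leq 0$ in $V_R$. The only real point to check carefully is the identity $(u^+)^* = (u^*)^+$ together with admissibility of $u^+$ as a test function in $W^{1,2}_0(V_R \cup \Gamma_R)$; everything else is a short computation.
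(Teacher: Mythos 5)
Your proposal is correct and follows essentially the same route as the paper: test the weak inequality with the Stampacchia truncation $u^+$, use the chain rule and the commutation of trace with truncation, and conclude from $\langle\nabla u,\nabla u^+\rangle=\abs{\nabla u^+}^2$ and $u^*(u^+)^*=((u^*)^+)^2\geq 0$ that $u^+$ vanishes. The paper states the final identification $u^+=0$ without elaboration, whereas you spell out the locally-constant-plus-zero-trace step; just note that your stated reason for the connectedness of $V_R$ (two connected sets sharing interior points) is not a valid general principle, though $V_R$ is indeed connected here since it is a vertically convex supergraph region.
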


    \begin{proof}
        Let $u \in W^{1,2}(V_R)$ and let $u_+ := p(u)$ where $p \colon \R \to [0,\infty[$ is the orthogonal projection onto $[0,\infty[$.
        According to the chain rule, $u_+ \in W^{1,2}(V_R)$ and for $\LL^n$-a.e.\ $x \in V_R$,
        \begin{align}\label{eq_chain_rule}
            \nabla u_+(x) =
            \begin{cases}
                \nabla u(x) & \text{if $u(x) > 0$}\\
                0           & \text{if $u(x) = 0$}.
            \end{cases}
        \end{align}
        One can also see that $(u_+)^* = p(u^*)$ using the characterization (\ref{eq_char_trace}) and the fact that $p$ is Lipschitz.
        Now, we assume in addition that $u$ is a weak solution of (\ref{eq_maximum_principle0}) and that $u^* \leq 0$ on $\partial V_R \setminus \Gamma_R$.
        We have $u_+ \in W^{1,2}_0(V_R)$ so (\ref{eq_maximum_principle1}) gives
        \begin{equation}
            \int_{V_R} \! \langle \nabla u, \nabla u_+ \rangle \dm \LL^n + \int_{\Gamma_R} \! u^* u_+^* \dm\HH^{n-1} \leq 0.
        \end{equation}
        As $u^* u_+^* = (u_+^*)^2$ and $\langle \nabla u, \nabla u_+ \rangle = \abs{\nabla u_+}^2$, we conclude that $u_+ = 0$ on $V_R$.
    \end{proof}

    \subsection{Hölder continuity up to the boundary}

    \begin{lem}[Hölder continuity]\label{lem_holder}
        Let $u \in W^{1,2}(V_R) \cap L^\infty(V_R)$ be a weak solution of (\ref{eq_robin_problem}).
        There exists constants $C \geq 1$ (depending on $n$, $\alpha$, $A$) and $0 < \sigma < 1$ (depending on $n$) such that for all $x, y \in
        V_R$,
        \begin{equation}
            \abs{u(x) - u(y)} \leq C \abs{u}_\infty \left(\frac{\abs{x - y}}{r}\right)^{\sigma}.
        \end{equation}
        where $r := \max \set{\mathrm{d}(x,\R^n \setminus B_R), \mathrm{d}(y,\R^n \setminus B_R)}$.
    \end{lem}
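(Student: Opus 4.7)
My plan is to obtain Hölder regularity up to the boundary via the De Giorgi iteration scheme, treating the Robin condition $\partial_\nu u - u = 0$ as an inhomogeneous Neumann condition with right-hand side $u^*$: since $|u^*| \leq |u|_\infty$, this amounts to a Neumann problem with a bounded right-hand side, for which De Giorgi's method is well suited.

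\textbf{Reduction via flattening.} First I would flatten the graph through the $C^{1,\alpha}$ diffeomorphism $\Phi(y',y_n) := (y', y_n + f(y'))$, which maps the half-ball $\{y_n > 0\} \cap B_{R/2}$ onto a subset of $V_R$ and has bi-Lipschitz constants depending only on $A$. The pullback $v := u \circ \Phi$ satisfies a divergence-form equation $\mathrm{div}(a \nabla v) = 0$ on a half-ball $B^+$ with coefficients $a \in C^\alpha$ and $a(0) = I$, together with an oblique boundary condition of the form $(a\nabla v) \cdot e_n = b\, v^*$ on the flat part $\Gamma$, where $b$ is bounded in terms of $A$. It therefore suffices to prove a quantitative $C^\sigma$ estimate for $v$.

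\textbf{Oscillation decay.} For $y_0 \in \Gamma$ and $s > 0$ small, test the weak formulation with $\eta^2 (v - k)_\pm$ for a level $k$ and a smooth cutoff $\eta$ supported in $B_s(y_0)$. Young's inequality and the bound $|v^*| \leq |u|_\infty$ yield the Caccioppoli-type estimate
\begin{equation*}
\int_{B^+_s} \eta^2 |\nabla(v-k)_\pm|^2 \leq C \int_{B^+_s} (v-k)_\pm^2 |\nabla \eta|^2 + C |u|_\infty \int_{\Gamma_s} \eta^2 (v-k)_\pm^*.
\end{equation*}
Combining this with Sobolev and trace inequalities on the half-ball and running the standard De Giorgi iteration on the level sets $\{v > k\}$ and $\{v < k\}$ (alternatively, a Moser iteration for the weak Harnack inequality) produces an oscillation decay
\begin{equation*}
\mathrm{osc}_{B^+_{s/2}(y_0)} v \leq \theta \, \mathrm{osc}_{B^+_s(y_0)} v + C s\, |u|_\infty
\end{equation*}
for some fixed $\theta \in (0,1)$.

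\textbf{Iteration and main obstacle.} The standard geometric-series lemma iterates this single-scale inequality into $\mathrm{osc}_{B^+_r(y_0)} v \leq C (r/R)^\sigma |u|_\infty$ for some $\sigma \in (0,1)$ depending only on $n$. Combining this with the classical interior Hölder regularity of harmonic functions, and splitting $|x-y|$ according to the distance of $x,y$ to $\partial V_R$, yields the stated estimate for $u$. The main technical obstacle is the inhomogeneity in the boundary Caccioppoli estimate: the Robin term $|u|_\infty \int_{\Gamma_s} \eta^2 (v-k)_\pm^*$ does not scale with the level $k$ in the same way as the Dirichlet energy, so it introduces an additive error of order $s\,|u|_\infty$ at every scale. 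One must carefully verify that this perturbation neither destroys the contraction $\theta < 1$ nor accumulates when summed through the iteration, which forces a bookkeeping argument separating the geometric decay from the additive radius-linear error.
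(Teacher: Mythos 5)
Your proposal is correct in outline but follows a genuinely different route from the paper. You flatten the boundary and run De Giorgi's iteration, treating the Robin term as an inhomogeneous conormal condition with bounded data $|u^*|\leq\abs{u}_\infty$; the resulting oscillation decay carries an additive error $Cs\abs{u}_\infty$ which is then absorbed by the standard iteration lemma. The paper instead never invokes the De Giorgi machinery: it proves a boundary weak Harnack inequality (Lemma~\ref{lem_harnack}) by constructing an explicit paraboloid supersolution $p$ with $\Delta p\leq 0$ and $\partial_\nu p\leq -1$, so that the zeroth-order boundary term is dominated by the barrier and the maximum principle (Lemma~\ref{lem_maximum_principle}) applies directly; oscillation decay then follows by the classical Harnack-to-H\"older argument applied to $M-u$ and $u-m$, with no additive inhomogeneity appearing at all. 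Your approach is more robust (it survives bounded measurable coefficients and does not use harmonicity), while the paper's is more elementary and self-contained, and yields the weak Harnack inequality as a by-product. One point you should make explicit: the De Giorgi exponent $\sigma$ depends on the ellipticity of the flattened coefficients, hence a priori on the Lipschitz norm of $f$, i.e.\ on $A$; since the lemma asserts $\sigma=\sigma(n)$, you must first restrict to scales $r\leq r_0(n,\alpha,A)$ at which $\abs{\nabla f}$ is below a dimensional threshold (using $\nabla f(0)=0$ and $R^\alpha[\nabla f]_\alpha\leq A$), and let only the multiplicative constant $C$ absorb the dependence on $\alpha$ and $A$ — this is exactly the role of the ``preliminary observation'' in the paper's proof.
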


    The Hölder continuity relies on a weak Harnack inequality at the boundary.
    We temporarily redefine the notation $V_R$, $\Gamma_R$ in the next lemma because is more convenient to work with cylinders rather than balls.

    \begin{lem}[Weak Harnack Inequality]\label{lem_harnack}
        We fix a radius $0 < R \leq 1$.
        We fix a vector $e_n \in \mathbf{S}^{n-1}$ and we decompose each point $x \in \R^n$ as $x = x' + x_n e_n$ where $x' \in e_n^\perp$ and $x_n \in \R$.
        We fix a $1$-Lipschitz function $f \colon e_n^\perp \to \R$ and we assume that $0 \leq f \leq \delta R$ for a certain $0 < \delta \leq \frac{1}{2}$ small enough (depending on $n$).
        Finally, we define
        \begin{align}
            V_R         &:= \set{x \in \R^n | \abs{x'} < R,\ f(x') < x_n < 2R}\\
            \Gamma_R    &:= \set{x \in \R^n | \abs{x'} < R,\ x_n = f(x')}
        \end{align}
        Let $u \in W^{1,2}(V_R)$ be a non-negative weak solution of
        \begin{equation}
            \left\{
                \begin{array}{lcll}
                    \Delta u            & = & 0 &\text{in $V_R$}\\
                    \partial_{\nu}u - u & = & 0 &\text{in $\Gamma_R$}.
                \end{array}
            \right.
        \end{equation}
        Then there exists a constant $C \geq 1$ (depending on $n$) such that
        \begin{multline}\label{eq_harnack}
            \sup \set{u(x) | \abs{x'} \leq \tfrac{1}{2}R,\ 2 \delta R \leq x_n \leq \tfrac{3}{2} R} \\\leq C \inf \set{u(x) | \abs{x'} \leq \tfrac{1}{2}R,\ f(x') < x_n \leq \tfrac{3}{2} R}
        \end{multline}
    \end{lem}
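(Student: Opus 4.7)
The approach is classical Moser iteration, adapted to the Robin boundary condition. The key sign observation is that $u \geq 0$ makes the boundary contribution $\int_{\Gamma_R} u^{*} v^{*}$ in the weak formulation non-negative whenever the test function $v$ is non-negative, which yields favorable signs at several steps. After the standard regularization $u \mapsto u + \varepsilon$ (and letting $\varepsilon \to 0$ at the end), we may assume $u > 0$ everywhere. We combine three estimates on concentric cylindrical sub-regions: (A) an upper Moser bound $\sup u \leq C\, (\fint u^{q_0})^{1/q_0}$ for some $q_0 > 1$; (B) a lower Moser bound $\inf u \geq C^{-1} (\fint u^{-q_0})^{-1/q_0}$; and (C) a John-Nirenberg bridge $(\fint u^{q_0})^{1/q_0} \leq C\, (\fint u^{-q_0})^{-1/q_0}$. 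Chained together, these yield the desired inequality $\sup_{U_1} u \leq C \inf_{U_2} u$.

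\textbf{Upper and lower Moser.} For (A), I would test the weak equation with $v = \eta^2 u^{2\gamma+1}$ for a smooth cutoff $\eta$ supported in a slightly larger concentric cylinder. For $\gamma > 0$, both the boundary integral $\int_{\Gamma_R} \eta^2 u^{2(\gamma+1)}$ and the interior term $(2\gamma+1) \int \eta^2 u^{2\gamma} |\nabla u|^2$ have non-negative signs, so after a Cauchy-Schwarz absorption the Caccioppoli inequality $\int \eta^2 |\nabla u^{\gamma+1}|^2 \leq C_\gamma \int u^{2(\gamma+1)} |\nabla \eta|^2$ emerges. Combined with the Sobolev inequality on the domain (whose constants are uniform because $f$ being $1$-Lipschitz with $|f| \leq \delta R$ makes $V_R$ bi-Lipschitz equivalent to a flat cylinder with constants depending only on $n$), the iteration on shrinking cylinders yields (A). For (B), a direct calculation shows that $w := 1/u$ satisfies $\Delta w = 2 u^{-3} |\nabla u|^2 \geq 0$ in $V_R$ and $\partial_{\nu} w = -\partial_\nu u / u^2 = -1/u = -w$ on $\Gamma_R$, so $w$ is a weak Robin sub-solution: $\int \nabla w \cdot \nabla v + \int_{\Gamma_R} w^* v^* \leq 0$ for non-negative $v$. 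Moser iteration applies to $w$ exactly as for $u$ and gives (B).

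\textbf{Bridging and the main obstacle.} For (C), I would test the weak equation with $v = \eta^2 / u$ to produce the logarithmic estimate $\int \eta^2 |\nabla \log u|^2 \leq C \int |\nabla \eta|^2 + C \int_{\Gamma_R} \eta^2$. Since $R \leq 1$, the boundary term (of order $R^{n-1}$) is absorbed by the interior term (of order $R^{n-2}$), and Poincaré on the cylinder converts this into a uniform BMO bound on $\log u$. The John-Nirenberg inequality then supplies (C) for some small $q_0 > 0$. The main technical obstacle is uniform control of the Sobolev, Poincaré, and trace inequality constants across all the sub-cylinders that appear in the iteration; this is precisely where the assumption that $f$ is $1$-Lipschitz with small $L^\infty$ norm plays the decisive role, by ensuring the geometry of every sub-domain encountered is uniformly bi-Lipschitz equivalent to a flat rectangular cylinder, so that all constants depend only on $n$.
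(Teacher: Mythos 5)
Your route is genuinely different from the paper's: you propose a full boundary Moser iteration (Caccioppoli for powers of $u$ and of $1/u$, plus a $\log u$ BMO estimate and John--Nirenberg), whereas the paper avoids iteration altogether. The paper splits the region into an interior cube, handled by chaining the classical interior Harnack inequality for harmonic functions, and a thin boundary layer $\set{0 \leq x_n \leq 2\delta R}$, where the lower bound is propagated down to $\Gamma_R$ by an explicit paraboloid barrier $p$ with $\Delta p \leq 0$, $\partial_\nu p \leq -1$ and the maximum principle of Lemma~\ref{lem_maximum_principle} applied to $w := u + Ip - I$. The barrier argument is shorter and only needs the crude quantitative inputs ($f$ is $1$-Lipschitz, $0 \leq f \leq \delta R$, $R \leq 1$) to check five pointwise inequalities, while your approach, if completed, would give the stronger measure-theoretic weak Harnack inequality; but it requires uniform Sobolev, Poincaré and trace constants on all sub-cylinders, which is considerably more bookkeeping.

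There is, moreover, a concrete sign error in your step (B). With $\nu$ the inner normal and the weak formulation $\int_{V_R} \langle \nabla u, \nabla v\rangle + \int_{\Gamma_R} u^* v^* = 0$, the function $w := 1/u$ satisfies $\partial_\nu w = -\partial_\nu u/u^2 = -w$, i.e.\ $\partial_\nu w + w = 0$: the Robin coefficient \emph{flips sign}. Integrating by parts, the correct weak inequality is
\begin{equation*}
    \int_{V_R} \langle \nabla w, \nabla v \rangle \dm\LL^n - \int_{\Gamma_R} w^* v^* \dm\HH^{n-1} = -\int_{V_R} v \, \Delta w \dm\LL^n \leq 0 \quad (v \geq 0),
\end{equation*}
not the inequality with $+\int_{\Gamma_R} w^* v^*$ that you assert. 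Consequently, when you test with $v = \eta^2 w^{2\gamma+1}$, the boundary term $\int_{\Gamma_R} \eta^2 (w^*)^{2\gamma+2}$ lands on the \emph{bad} side of the Caccioppoli inequality and cannot be discarded; your premise that "$u \geq 0$ yields favorable signs" fails precisely for the lower bound, which is the only part of the lemma that is not already covered by the interior Harnack inequality. The step is repairable --- absorb the boundary term with a scaled trace inequality $\int_{\Gamma_R} g^2 \leq \theta \int_{V_R} \abs{\nabla g}^2 + C(\theta) R^{-1} \int_{V_R} g^2$ and use $R \leq 1$ so that the resulting zero-order term is admissible in the iteration --- but as written, "Moser iteration applies to $w$ exactly as for $u$" is false, and this absorption (together with the growth in $\gamma$ of the constants) needs to be carried out explicitly. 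The same sign issue does not affect your steps (A) and (C), which are correct.
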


    \begin{proof}
        The letter $C$ is a constant $\geq 1$ that depends on $n$ and whose value might change from one line to the other.
        We introduce the closed cubes
        \begin{align}
            Q   &:= \set{x \in \R^n | \abs{x'} \leq \tfrac{3}{4}R,\ 0 \leq x_n \leq 2\delta R}\\
            Q_i &:= \set{x \in \R^n | \abs{x'} \leq \tfrac{3}{4}R,\ 2\delta R \leq x_n \leq \tfrac{3}{2}R}.\label{eq_Q_0}
        \end{align}
        It suffices to prove that $\sup_{Q_i} u \leq C \inf_{Q_i} u$ and that
        $I \leq C J$ where
        \begin{align}
            I &:= \inf \set{u(x) | \abs{x'} \leq \tfrac{3}{4}R,\ x_n = 2\delta R}\\
            J &:= \inf \set{u(x) | \abs{x'} \leq \tfrac{1}{2}R,\ f(x') < x_n \leq 2\delta R}.
        \end{align}

        We show that $\sup_{Q_i} u \leq C \inf_{Q_i} u$ by applying the interior Harnack inequality for harmonic functions and by observing that
        \begin{equation}
            \mathrm{d}(Q_i,\R^n \setminus V_R) \geq \min \set{\tfrac{1}{4} R,\delta R}.
        \end{equation}
        Here, remember that $\delta$ will be chosen in function of $n$ only.
        Now, we focus on $Q$.
        We define two subsets of $Q$,
        \begin{align}
            W_R         &:= \set{x \in \R^n | \abs{x'} \leq \tfrac{3}{4}R,\ f(x') < x_n < 2 \delta R}\\
            \Sigma_R    &:= \set{x \in \R^n | \abs{x'} \leq \tfrac{3}{4}R,\ x_n = f(x')}
        \end{align}
        and we recall that for $\HH^{n-1}$-a.e.\ $x \in \Sigma_R$, the normal vector to $\Gamma_R$ going upward
        \begin{equation}
            \nu(x) = \frac{-\nabla f(x') + e_n}{\sqrt{1+\abs{\nabla f(x')}^2}}.
        \end{equation}
        We are going to build a paraboloid $p$ such that
        \begin{enumerate}[label = (\roman*)]
            \item
                $p \geq 0$ on $Q$;

            \item
                $p \geq 1$ on $\set{x \in \R^n | \abs{x'} = \tfrac{3}{4}R,\ 0
                \leq x_n \leq 2\delta R}$;

            \item
                $p \leq \frac{99}{100}$ on $\set{x \in \R^n | \abs{x'} \leq
                \tfrac{1}{2}R,\ 0 \leq x_n \leq 2\delta R}$;

            \item
                $\Delta p \leq 0$ on $Q$;

            \item
                $\partial_{\nu} p \leq -1$ on $\Sigma_R$.
        \end{enumerate}
        Once $p$ is built, we apply the maximum principle to $w := u + I p - I$.
        More precisely, one can check that $\Delta w \leq 0$ on $W_R$ and $\partial_{\nu} w \leq u - I \leq w$ on $\Sigma_R$.
        One can also check that $w \geq 0$ on $\partial W_R \setminus \Sigma_R$.
        The maximum principles implies that $w \geq 0$ on $W_R$ and in particular, $u \geq \frac{1}{100}I$ on $\set{x \in \R^n | \abs{x'} \leq \tfrac{1}{2}R,\ f(x') < x_n \leq 2 \delta R}$.
        The paraboloid is
        \begin{equation}
            p(x) := \frac{1}{2} - \left(\frac{x_n}{8\delta R}\right) - \left(\frac{x_n}{4\delta R}\right)^2 + \left(\frac{4\abs{x'}}{3R}\right)^2.
        \end{equation}
        Let us check these properties.
        For $x \in Q$, we have $0 \leq x_n \leq 2 \delta R$ so
        \begin{equation}
            \left(\frac{4\abs{x'}}{3}\right)^2 \leq p \leq \frac{1}{2} + \left(\frac{4\abs{x'}}{3R}\right)^2.
        \end{equation}
        The first three items follow.
        We compute
        \begin{equation}
            \Delta p = -\frac{1}{8\delta^2 R^2} + \frac{32(n-1)}{9 R^2}
        \end{equation}
        and we take $\delta$ small enough (depending on $n$) so that $\Delta p \leq 0$ on $Q$.
        For $x \in \R^n$, we have
        \begin{equation}
            \nabla p(x) = -\frac{e_n}{8\delta R} - \frac{x_n e_n}{8 \delta^2 R^2} + \frac{32 x'}{9 R^2}.
        \end{equation}
        so for $\HH^{n-1}$-a.e.\ $x \in \Sigma_R$,
        \begin{equation}
            \partial_{\nu} p(x) = \frac{1}{\sqrt{1 + \abs{\nabla f(x')}^2}} \left(-\frac{1}{8\delta R} - \frac{x_n}{8 \delta^2 R^2} - \frac{32(x' \cdot \nabla f(x'))}{9 R^2}\right).
        \end{equation}
        In addition $x_n \geq 0$, $\abs{x'} \leq \tfrac{3}{4} R$ and $f$ is $1$-Lipschitz so
        \begin{equation}
            -\frac{1}{8\delta R} - \frac{x_n}{8 \delta^2 R^2} - \frac{32(x' \cdot \nabla f(x'))}{9 R^2} \leq -\frac{1}{8 \delta R} + \frac{8}{3 R}.
        \end{equation}
        We take $\delta$ small enough so that $-\frac{1}{8 \delta R} + \frac{8}{3 R} \leq -\frac{\sqrt{2}}{R}$ and thus $\partial_{\nu}p \leq -\frac{1}{R} \leq -1$ (because $R \leq 1$).
    \end{proof}

    We are ready to prove Lemma~\ref{lem_holder} (we come back to the notations $V_R$, $\Gamma_R$ of the introduction).
    \begin{proof}[Proof of Lemma~\ref{lem_holder}]
        We start with a preliminary observation.
        There exists a radius $0 < r_0 \leq 1$ (depending on $n$, $\alpha$, $A$) such that the following holds true.
        For all $x \in \Gamma_R$, for all $0 \leq r \leq r_0$ such that $B(x,4r) \subset B_R$, there exists a vector $e_n \in \mathbf{S}^{n-1}$ inducing a coordinate system $y = x + (y' + y_n e_n)$ and a $\tfrac{1}{2}\delta$-Lipschitz function $f \colon e_n^\perp \to \R$ (where $0 < \delta \leq \tfrac{1}{2}$ is the constant of Lemma~\ref{lem_harnack}) such that $f(0) = 0$ and
        \begin{subequations}\label{eq_V_graph}
        \begin{align}
            V_R \cap B(x,3r)        &= \set{y \in B(x,3r) | y_n > f(y')}\\
            \Gamma_R \cap B(x,3r)   &= \set{y \in B(x,3r) | y_n = f(y')}.
        \end{align}
        \end{subequations}
        Note that $\abs{f} \leq \tfrac{1}{2} \delta r$ in $e_n^\perp \cap B(0,r)$.
        We consider the translated coordinate system whose origin is $x - \tfrac{1}{2}\delta r e_n$. In the new system, every point $y \in \R^n$ can be written
        \begin{equation}
            y = x - \tfrac{1}{2} \delta r e_n + (z' + z_n e_n).
        \end{equation}
        Equivalently, the new coordinates are $z' = y'$ and $z_n = y_n + \tfrac{1}{2} \delta r$.
        Setting $g = f + \tfrac{1}{2} \delta r$, we have $0 \leq g \leq \delta r$ and the description (\ref{eq_V_graph}) implies
        \begin{subequations}
        \begin{multline}
            V_R \cap \set{y \in \R^n | \abs{z'} < r,\ z_n < 2r} \\
            = \set{y \in \R^n | \abs{z'} < r,\ g(z') < z_n < 2r},
        \end{multline}
        \begin{multline}
            \Gamma_R \cap \set{y \in \R^n | \abs{z'} < r,\ z_n < 2r} \\
            = \set{y \in \R^n | \abs{z'} < r,\ z_n = g(y')}.
        \end{multline}
        \end{subequations}
        Therefore, we can apply Lemma~\ref{lem_harnack} in the new coordinates.
        Whenever a non-negative function $v \in W^{1,2}(V_R \cap B(x,4r))$ is a weak solution of
        \begin{equation}
            \left\{
                \begin{array}{lcl}
                    \Delta v            & = & 0 \quad \text{in $V_R \cap B(x,4r)$}\\
                    \partial_{\nu}v - v & = & 0 \quad \text{in $\Gamma_R \cap B(x,4r)$},
                \end{array}
            \right.
        \end{equation}
        we can draw the conclusion that there exists $C_0 > 1$ (depending on $n$) such that
        \begin{equation}
            v(x + r e_n) \leq C_0 \inf \set{v(y) | y \in V_R \cap B(x,\tfrac{1}{2} r)}.
        \end{equation}

        Now we define
        \begin{equation}
            \Delta := \set{(x,r) | x \in \overline{V_R},\ 0 < r \leq r_0,\ B(x,r) \subset B_R}
        \end{equation}
        and for $(x,r) \in \Delta$,
        \begin{equation}
            \textrm{osc}(x,r) := \sup \set{\abs{u(z) - u(y)} | y,z \in V_R \cap B(x,r)}.
        \end{equation}
        We are going to show that there exists $L > 1$ (close to $1$, depending on $n$) such that for all $(x,r) \in \Delta$,
        \begin{equation}\label{eq_o_goal}
            \textrm{osc}(x,\tfrac{1}{32}r) \leq L^{-1} \textrm{osc}(x,r).
        \end{equation}
        We are going to distinguish three cases: the case $x \in \Gamma_R$, the case $\Gamma_R \cap B(x,\tfrac{1}{12}r) \ne \emptyset$ and the case $B(x,\tfrac{1}{12}r) \subset V_R$.

        Let $(x,r) \in \Delta$ with $x \in \Gamma_R$.
        We define
        \begin{align}
            M &:= \sup \set{u(z) | z \in V_R \cap B(x,r)}\\
            m &:= \inf \set{u(z) | z \in V_R \cap B(x,r)}
        \end{align}
        and
        \begin{align}
            M' &:= \sup \set{u(z) | z \in V_R \cap B(x,\tfrac{1}{8}r)}\\
            m' &:= \inf \set{u(z) | z \in V_R \cap B(x,\tfrac{1}{8}r)}.
        \end{align}
        We apply the preliminary paragraph to $M - u$ and $u - m$ as functions of $W^{1,2}(V_R \cap B(x,r))$ and we obtain that there exists $C_0 > 1$ (depending on $n$) such that
        \begin{align}
            M - u(x + \tfrac{1}{4}r e_n) &\leq C_0(M - M')\\
            u(x + \tfrac{1}{4}r e_n) - m &\leq C_0(m' - m).
        \end{align}
        It follows that $M - m \leq C_0(M - M' + m' - m)$ and then
        \begin{equation}
            M' - m' \leq L^{-1} (M - m)
        \end{equation}
        where $L := C_0(C_0 - 1)^{-1} > 1$.
        This proves that
        $\textrm{osc}(x,\tfrac{1}{8}r) \leq L^{-1} \textrm{osc}(x,r)$.

        Let $(x,r) \in \Delta$ be such that there exists $x^* \in \Gamma_R$ with $\abs{x - x^*} < \tfrac{1}{12}r$.
        We observe that
        \begin{equation}
            B(x,\tfrac{1}{32}r) \subset B(x^*,\left(\tfrac{1}{32}+\tfrac{1}{12}\right)r),
        \end{equation}
        that by the previous step
        \begin{equation}
            \textrm{osc}(x^*,\left(\tfrac{1}{32}+\tfrac{1}{12}\right)r) \leq L^{-1} \textrm{osc}(x^*,\left(\tfrac{1}{4}+\tfrac{2}{3}\right)r)
        \end{equation}
        and finally that
        \begin{equation}
            B(x^*,\left(\tfrac{1}{4}+\tfrac{2}{3}\right)r) \subset B(x,r).
        \end{equation}
        This proves that $\textrm{osc}(x,\tfrac{1}{32}r) \leq L^{-1}
        \textrm{osc}(x,r)$.

        Let $(x,r) \in \Delta$ be such that $B(x,\tfrac{1}{12}r) \subset V_R$.
        We can proceed as in the case $x \in \Gamma_R$ (but replacing Lemma~\ref{lem_harnack} by the interior Harnack inequality for harmonic functions) to get
        \begin{equation}
            \textrm{osc}(x,\tfrac{1}{32}r) \leq L^{-1} \textrm{osc}(x,\tfrac{1}{16}r).
        \end{equation}

        We have proved (\ref{eq_o_goal}) in all cases.
        We fix $x \in V_R$ and we define the radii $r_1 := \mathrm{d}(x, \R^n \setminus B_R)$ and $\rho := \min \set{r_0, r_1}$ so that $(x,\rho) \in \Delta$.
        For all $0 < r \leq \rho$, we have $\textrm{osc}(x,\tfrac{1}{32}r) \leq L^{-1} \textrm{osc}(x,r)$ and it is easy to deduce that for all $0 < r \leq \rho$,
        \begin{equation}
            \textrm{osc}(x,r)  \leq L \left(\frac{r}{\rho}\right)^\sigma \textrm{osc}(x,\rho),
        \end{equation}
        where $\sigma := \frac{\ln(L)}{\ln(32)} > 0$.
        This implies that for all $y \in V_R \cap B(x,\rho)$,
        \begin{align}
            \abs{u(y) - u(x)}   &\leq L \left(\frac{\abs{y-x}}{\rho}\right)^\sigma \textrm{osc}(x,\rho)\\
                                &\leq 2 L \abs{u}_\infty \left(\frac{\abs{y-x}}{\rho}\right)^\sigma
        \end{align}
        In fact, this inequality is also true for $y \in V_R$ such that $\abs{y - x} \geq \rho$ because we always have $\abs{u(y) - u(x)} \leq 2 \abs{u}_\infty$.
        We also underline that $\rho \geq r_0 r_1$ because $r_0 \leq 1$ and $r_1 \leq R \leq 1$.
        The lemma holds true for the constant $C := 2 L r_0^{-\sigma}$ (which depends on $n$ and $r_0$).
    \end{proof}

    \subsection{Gradient estimates}

    According to~\cite[Theorem 1.2]{Zhang}, the viscosity solutions are pointwise $C^{1,\alpha}$ up to the boundary.
    Although we use a weak formulation, the proof also applies in our case because it relies on the maximum principle (Lemma~\ref{lem_maximum_principle}), the Hölder continuity (Lemma~\ref{lem_holder}) and regularity results for solutions of the Neumann problem in a spherical cap.

    \begin{lem}[Schauder Estimate]\label{lem_robin_schauder}
        Let $u \in W^{1,2}(V_R) \cap L^\infty(V_R)$ be a weak solution of (\ref{eq_robin_problem}).
        Then there exists $C \geq 1$ and $0 < \sigma < 1$ (depending on $n$, $\alpha$, $A$) such that
        \begin{equation}
            \abs{\nabla u}_\infty + R^\sigma \left[\nabla u\right]_{\sigma} \leq C R^{-1} \textrm{osc}(u) + C \abs{u}_\infty,
        \end{equation}
        where the left-hand side is computed on $\tfrac{1}{2}V_R$ and the symbol $\textrm{osc}(u)$ means $\sup \set{\abs{u(x) - u(y)} | x,y \in V_R}$
    \end{lem}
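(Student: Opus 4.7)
The plan is to convert the Robin condition into an inhomogeneous Neumann condition with H\"older continuous right-hand side, then invoke classical Schauder estimates. The starting point is Lemma~\ref{lem_holder}, which furnishes $u \in C^{0,\sigma}(\overline{V_R})$ with $[u]_\sigma \lesssim R^{-\sigma}\abs{u}_\infty$. In particular, the trace $u^*$ on $\Gamma_R$ is of class $C^{0,\sigma}$, so the Robin equation $\partial_\nu u = u$ may be reinterpreted as the inhomogeneous Neumann condition $\partial_\nu u = g$ where the \emph{prescribed} datum $g := u^*$ is $C^{0,\sigma}$ with seminorm controlled by $\abs{u}_\infty$.

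Next I would flatten $\Gamma_R$ by the $C^{1,\alpha}$ diffeomorphism $\Phi \colon (y',y_n) \mapsto (y',\, y_n + f(y'))$. In the new coordinates $\tilde u := u \circ \Phi$ solves a divergence-form uniformly elliptic equation with $C^{0,\alpha}$ coefficients on a flat half-ball, together with an inhomogeneous Neumann condition of the form $b^i(y')\partial_i \tilde u = \tilde g(y')$ with $b^i \in C^{0,\alpha}$ and $\tilde g \in C^{0,\sigma}$, whose $C^\sigma$ norm is bounded by $\abs{u}_\infty$. At this point the classical Schauder estimate for Neumann problems with $C^{0,\alpha}$ coefficients and $C^{0,\sigma}$ boundary data (provable by the usual Campanato iteration, or directly quotable from standard references) yields
\begin{equation}
    \abs{\nabla \tilde u}_\infty + [\nabla \tilde u]_{\sigma'} \leq C \bigl(\abs{\tilde u}_\infty + \abs{\tilde g}_{C^\sigma}\bigr)
\end{equation}
with $\sigma' := \min(\sigma,\alpha)$, on the half-sized flat half-ball. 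Pulling back through $\Phi^{-1}$ delivers the corresponding estimate for $u$.

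The last step is to recover the precise scaling in $R$ and to sharpen the right-hand side to $R^{-1}\textrm{osc}(u) + \abs{u}_\infty$. A rescaling $v(x) := u(Rx)$ reduces the problem to $R = 1$; the hypothesis $R^\alpha [\nabla f]_\alpha \leq A$ is designed exactly to preserve the $C^{1,\alpha}$ data under this rescaling, so all constants remain dependent only on $n, \alpha, A$. To replace $\abs{u}_\infty$ by $\textrm{osc}(u)$ on the interior (gradient) side one applies the estimate to $u - c$ for a suitable constant $c$ (e.g.\ a mean value of $u$): this leaves $\nabla u$ unchanged while changing the Robin datum to $u - c$, which still has $L^\infty$ norm $\leq \abs{u}_\infty$, whereas the $\textrm{osc}(u)$ factor is what governs the purely harmonic interior contribution to $\nabla u$ away from $\Gamma_R$ via the ordinary gradient estimate for harmonic functions.

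The main obstacle is to track the dependence of all constants on $n, \alpha, A$ uniformly in $R$: this requires the rescaling argument to be executed carefully, and the parameter $A$ is introduced precisely to make such uniformity possible. A secondary technicality is the identification of our weak (integration-by-parts) formulation of the Robin problem with the Schauder-theoretic notion of solution of the flattened Neumann problem, but this follows routinely from the trace formula, integration by parts, and uniqueness of the classical solution once the regularity is established.
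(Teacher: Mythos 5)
Your approach is essentially the paper's: both proofs invoke Lemma~\ref{lem_holder} to establish that the trace $u^*$ is H\"older with seminorm $\lesssim R^{-\sigma}\abs{u}_\infty$ on a slightly shrunk portion of $\Gamma_R$, then reinterpret the Robin condition as an inhomogeneous Neumann condition $\partial_\nu u = g$ with $g := u^*$ regarded as prescribed $C^{0,\sigma}$ data, subtract a constant (your mean value $c$, the paper's pointwise value $u(x_0)$ for $x_0 \in \tfrac{3}{4}V_R$) so that the $\abs{v}_\infty$ term becomes $\textrm{osc}(u)$ while $\nabla v = \nabla u$, and then apply a scaled Schauder estimate, tracking constants via $R^\alpha[\nabla f]_\alpha \leq A$. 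The only genuine difference is the final invocation: the paper quotes the oblique-boundary estimate of Li--Zhang \cite[Theorem~1.2]{Zhang} directly on the curved $C^{1,\alpha}$ graph domain, while you propose to flatten the boundary via $\Phi(y',y_n) := (y', y_n + f(y'))$ and apply the flat half-ball Neumann Schauder estimate for divergence-form equations with $C^{0,\alpha}$ coefficients, then pull back. That variant is more elementary and self-contained, at the cost of having to verify that $\Phi$ transports the weak (integration-by-parts) formulation to a divergence-form Neumann problem with the right coefficient and data regularity; the paper's route shifts that bookkeeping into the citation.

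One small point you should make explicit: Lemma~\ref{lem_holder} gives H\"older continuity with a modulus that degenerates near $\partial B_R$, so the trace $u^*$ is only \emph{uniformly} $C^{0,\sigma}$ after restricting to $\tfrac{3}{4}\Gamma_R$ (or any proper subdomain). The paper handles this by explicitly restricting the Neumann problem to $\tfrac{3}{4}V_R$ before invoking the Schauder estimate; your outline implicitly assumes this but never states where the shrink happens.
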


    \begin{proof}
        The letter $C$ is a constant $\geq 1$ that depends on $n$, $V$ and whose value might change from one line to the other.
        We fix any $x_0 \in \tfrac{3}{4} V_R$.
        The function $v := u - u(x_0)$ is a weak solution of the Neumann problem
        \begin{equation}
            \left\{
                \begin{array}{lcll}
                    \Delta v               & = & 0  &\text{in $\tfrac{3}{4} V_R$}\\[3pt]
                    \partial_{\nu}v        & = & u  &\text{in $\tfrac{3}{4} \Gamma_R$}.
                \end{array}
            \right.
                \end{equation}
                We have restricted the domain of the equation so that the trace $u^*$ is Hölder on $\tfrac{3}{4} \Gamma_R$.
                More precisely, $\abs{u^*}_\infty \leq \abs{u}_\infty$ and according to Lemma~\ref{lem_holder}, there exists $0 < \sigma < 1$ (depending on $n$) such that for all $x, y \in \tfrac{3}{4} \Gamma_R$,
                \begin{equation}
                    \abs{u^*(x) - u^*(y)} \leq C \abs{u}_\infty \left(\frac{\abs{x-y}}{R}\right)^\sigma.
                \end{equation}
                Then we apply the scaled version of~\cite[Theorem 1.2]{Zhang}, assuming $\sigma$ small enough if necessary (depending on $n$, $\alpha$, $A$). We obtain
                \begin{equation}
                    R \abs{\nabla v}_\infty + R^{1+\sigma} \left[\nabla v\right]_{\sigma} \leq C \abs{v}_\infty + C R \abs{u}_\infty,
                \end{equation}
                where the left-hand side is computed on $\tfrac{1}{2}V_R$.
            \end{proof}

            The last result allows to control the oscillations of $u$.

            \begin{lem}[Oscillations estimate]\label{lem_robin_boundedness}
                Let $u \in W^{1,2}(V_R) \cap L^\infty(V_R)$ be a weak solution of (\ref{eq_robin_problem}).
                Then there exists $C \geq 1$ (depending on $n$, $\alpha$, $A$) such that
                \begin{equation}
                    \textrm{osc}(u) \leq C R \left(\fint_{V_R} \abs{\nabla u}^2 \dm\LL^n\right)^{\frac{1}{2}} + C R \abs{u}_\infty,
                \end{equation}
                where $\textrm{osc}(u) := \sup \set{\abs{u(x) - u(y)} | x,y \in \tfrac{1}{2} V_R}$.
            \end{lem}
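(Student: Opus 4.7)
The plan is to reduce the oscillation bound to an $L^2$-to-$L^\infty$ estimate for a Robin problem with bounded inhomogeneous boundary data. Set $c := \fint_{V_R} u\,\dm\LL^n$, so that $\abs{c} \leq \abs{u}_\infty$, and let $w := u - c$. Since $\textrm{osc}(u) \leq 2 \abs{w}_{L^\infty(\frac{1}{2}V_R)}$, it is enough to bound $\abs{w}_{L^\infty(\frac{1}{2}V_R)}$. The function $w$ is harmonic in $V_R$ and satisfies the inhomogeneous Robin condition $\partial_\nu w - w = c$ on $\Gamma_R$. Since $V_R$ is a Lipschitz graph domain whose Lipschitz constant is bounded by $A$ (as $\nabla f(0)=0$ together with $R^\alpha [\nabla f]_\alpha \leq A$ forces $\abs{\nabla f} \leq A$ on $B_R$), the Poincaré inequality on $V_R$ yields
\[
\left(\fint_{V_R} w^2 \,\dm\LL^n\right)^{\frac{1}{2}} \leq C R \left(\fint_{V_R} \abs{\nabla u}^2 \,\dm\LL^n\right)^{\frac{1}{2}}.
\]

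Next I would run a Moser iteration on $w$ to upgrade this $L^2$ bound to an $L^\infty$ bound on $\tfrac{1}{2}V_R$. Testing the weak form against $\phi := \abs{w}^{2\beta}w\,\eta^2$ for a nested family of cutoffs $\eta$ localizing in $V_R$, the boundary contribution naturally splits as
\[
\int_{\Gamma_R} \abs{w}^{2\beta+2}\eta^2 \,\dm\HH^{n-1} + \int_{\Gamma_R} c\,\abs{w}^{2\beta}w\,\eta^2 \,\dm\HH^{n-1},
\]
where the first piece has the favorable sign and can be kept, while the second is controlled by Cauchy--Schwarz combined with the trace inequality, producing at each step an additive correction of order $R\abs{c}^2$. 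Iterating in the usual dyadic way over shrinking cutoffs produces
\[
\abs{w}_{L^\infty(\tfrac{1}{2}V_R)} \leq C \left(\fint_{V_R} w^2 \,\dm\LL^n\right)^{\frac{1}{2}} + C R \abs{c},
\]
and combining with the Poincaré estimate above and $\abs{c} \leq \abs{u}_\infty$ gives the claimed bound, hence on $\textrm{osc}(u)$.

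The main technical point is the $L^2$-to-$L^\infty$ step for the Robin problem: the inhomogeneous data $c$ must be threaded through the Moser iteration without inflating the constant beyond a factor of $R\abs{c}$. The favorable sign of the Robin coefficient is essential, as it ensures that the boundary integrals produced by integration by parts against $\abs{w}^{2\beta}w\eta^2$ contribute positively (modulo the $c$-term), so that the interior Moser scheme adapts with only this additional correction. The Hölder continuity up to the boundary from Lemma~\ref{lem_holder} guarantees that $u$ is pointwise defined on $\overline{V_R}$, so that the $L^\infty$ norm on $\tfrac{1}{2}V_R$ is literal. A reflection across $\Gamma_R$ would give a quicker argument in the flat case, but since $\Gamma_R$ is merely $C^{1,\alpha}$ that shortcut is not available and the Moser iteration is necessary.
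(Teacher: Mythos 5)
Your decomposition matches the paper's: write $w := u - \fint_{V_R} u$, bound $\mathrm{osc}(u) \leq 2 \abs{w}_{L^\infty(\frac{1}{2}V_R)}$, and convert the $L^2$ norm of $w$ into the Dirichlet energy via Poincar\'e at the end. The step that differs is the $L^2$-to-$L^\infty$ estimate. The paper notices that $\partial_\nu u = u$ lets it reinterpret $v := u - m$ as a weak solution of a pure \emph{Neumann} problem with bounded inhomogeneous data $u$, and then cites a black-box local boundedness theorem for Neumann problems (Kim, Theorem 1.6 and Remark 1.12) to get $\abs{v}_\infty \leq C(\fint v^2)^{1/2} + CR\abs{u}_\infty$. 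You instead keep the \emph{Robin} structure, recasting $w$ as a solution of $\partial_\nu w - w = c$ with constant data $c$, and propose to run Moser iteration directly, using the sign-favorable boundary term $\int_{\Gamma_R} \abs{w}^{2\beta + 2}\eta^2$ to absorb boundary contributions. This is a genuinely different route for the key estimate: the paper outsources the hard step to a reference, while you would re-derive it. The payoff of your route is that it is self-contained and the boundary data is a single constant (simpler than $u$); the cost is that the iteration with inhomogeneous boundary data is technically delicate. To make the iteration close cleanly you would need the standard device of iterating something like $\abs{w} + R\abs{c}$ rather than $\abs{w}$, so that the additive corrections from the data $c$ are uniformly absorbed rather than accumulating as $\beta \to \infty$; you flag this concern but do not carry it out. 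Both arguments are valid, and they reach the same intermediate estimate.
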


            \begin{proof}
                The letter $C$ is a constant $\geq 1$ that depends on $n$, $\alpha$, $A$ and whose value might change from one line to the other.
                Let $m$ be the average value of $u$ on $V_R$.
                The function $v := u - m$ is a weak solution of the Neumann problem
                \begin{equation}
                    \left\{
                        \begin{array}{lcll}
                            \Delta v               & = & 0 &\text{in $V_R$}\\
                            \partial_{\nu}v        & = & u &\text{in $\Gamma_R$}.
                        \end{array}
                    \right.
                \end{equation}
                We apply a local boundedness estimate for weak solutions of Neumann problems (\cite[Theorem 1.6 and Remark 1.12]{Kim}). We obtain
                \begin{equation}\label{eq_neumann_bound}
                    \abs{v}_\infty \leq C \left(\fint_{V_R} \! \abs{v}^2 \dm \LL^n\right)^{\frac{1}{2}} + C R \abs{u}_\infty,
                \end{equation}
                where the left-hand side is computed on $\tfrac{1}{2} V_R$.
                The triangular inequality shows that
                \begin{equation}
                    \textrm{osc}(u) \leq 2 \abs{v}_\infty
                \end{equation}
                and the Poincar\'e inequality gives
                \begin{equation}
                    \left(\fint_{V_R} \! \abs{v}^2 \dm \LL^n\right)^{\frac{1}{2}} \leq C R \left(\fint_{V_R} \! \abs{\nabla u}^2 \dm \LL^n\right)^{\frac{1}{2}}.
                \end{equation}
            \end{proof}

            \section{Extracts from David's book}\label{appendix_david}

            We extract some important results of~\cite{David}.
            We work in an open set $X$ of the Euclidean space $\R^n$ ($n > 1$) and we fix a triple of parameters $\mathcal{P} := (r_0,a,M)$ composed of $r_0 > 0$, $a \geq 0$ and $M \geq 1$.
            We start by summarizing Definitions $2.1$ (\emph{admissible pairs}), $7.2$ (\emph{competitors}), $7.21$ (\emph{local quasiminimizers}) and $8.24$ (\emph{coral pairs}) 

            \begin{defi}\label{defi_david_quasi}
                The set of \emph{admissible pairs} $\mathcal{A}$ is the set of all pairs $(u,K)$ where $K \subset X$ is relatively closed in $X$ and $u \in W^{1,2}_{\loc}(X \setminus K)$.
                Let $(u,K)$ be an admissible pair and let $B$ be an open ball such that $\overline{B} \subset X$.
                A \emph{competitor} of $(u,K)$ in $B$ is a pair $(v,L) \in \mathcal{A}$ such that $K \setminus \overline{B} = L \setminus \overline{B}$ and $u = v$ $\LL^n$-a.e.\ on $X \setminus (K \cup \overline{B})$.
                In this case, we set
                \begin{equation}
                    E(u) := \int_B \! \abs{\nabla u}^2 \dm\LL^n,\quad E(v) := \int_B \! \abs{\nabla v}^2
                \end{equation}
                and
                \begin{equation}
                    \Delta E := \max \set{(E(v) - E(u)), M(E(v) - E(u))}.
                \end{equation}
                We say that $(u,K)$ is a \emph{local $\mathcal{P}$-quasiminimizer in $X$} if for all open balls $B$ of radius $0 < r \leq r_0$ such that $\overline{B} \subset X$, for all competitors $(v,L)$ of $(u,K)$ in $B$, we have
                \begin{equation}\label{eq_quasi}
                    \HH^{n-1}(K \setminus L) \leq M \HH^{n-1}(L \setminus K) + \Delta E + a r^{n-1}.
                \end{equation}
                In addition, we say that $(u,K)$ is \emph{coral} if $K = \spt(\HH^{n-1} \mres K)$ in $X$.
                This means that for all $x \in K$ and all $r > 0$, $\HH^{n-1}(K \cap B(x,r)) > 0$.
            \end{defi}

            \begin{rmk}\label{rmk_david}
                If $(u,K)$ is a quasiminimizer, we can see that $K$ is $\HH^{n-1}$ locally finite. For all open ball $B$ of radius $0 \leq r \leq r_0$ such that $\overline{B} \subset X$, we consider the competitor
                \begin{equation}
                    v :=
                    \begin{cases}
                        u   &   \text{in} \ X \setminus \overline{B}\\
                        0   &   \text{in} \ \overline{B}
                    \end{cases}
                \end{equation}
                and $L = (K \setminus \overline{B}) \cup \partial B$.
                In particular, we have $K \setminus L = K \cap B$ and $L \setminus K \subset \partial B$ and $\Delta E \leq 0$.
                This proves that $\HH^{n-1}(K \cap \overline{B}) \leq ((M+1) n \omega_n + a)r^{n-1}$.

                For all competitors $(v,L)$ of $(u,K)$, we have either $\HH^{n-1}(L \setminus K) = \infty$ and thus (\ref{eq_quasi}) says nothing or $\HH^{n-1}(K \setminus L) < \infty$ and thus $L$ is $\HH^{n-1}$ locally finite in $X$.
                This justifies that we can assume that $L$ is $\HH^{n-1}$ locally finite without loss of generality.
            \end{rmk}

            We are mainly concerned about Ahlfors-regularity (Definition 18.9) and uniform rectifiability (Section 73).

            \begin{defi}[Ahlfors-regularity]
                A closed set $E \subset \R^n$ is Ahlfors-regular of dimension $n-1$ if there exists a constant $C \geq 1$ such that for all $x \in E$ and for all $0 < r < \mathrm{diam}(E)$
                \begin{equation}
                    C^{-1} r^{n-1} \leq \HH^{n-1}(E \cap B(x,r)) \leq C r^{n-1}.
                \end{equation}
            \end{defi}

            We don't give definitions of uniform rectifiability because there are too many.
            They are equivalent for closed, Ahlfors-regular sets.
            The reader can find a survey of uniform rectifiability in~\cite[Section 73]{David} and also on Guy David's webpage (Notes-Parkcity.dvi).

            Next, we summarize Definition 18.14 (\emph{TRLQ class}), Section 72 and Section 74.
            It says that quasiminimizers are locally Ahlfors-regular and locally contained in a uniformly rectifiable set.
            In fact, we have already seen the first item in Remark~\ref{rmk_david}.

            \begin{thm}\label{thm_david}
                Let $\mathcal{P} := (r_0,a,M)$ be a triple of parameters composed of $r_0 > 0$, $a \geq 0$ and $M \geq 1$.
                Assume that $a$ is small enough (depending on $n$, $M$). 
                Let $(u,K)$ be a coral and local $\mathcal{P}$-quasiminimizer in $X$.
                \begin{enumerate}
                    \item
                        For all $x \in X$, for all $0 < r \leq r_0$ such that $B(x,r) \subset X$,
                        \begin{equation}
                            \HH^{n-1}(K \cap B(x,r)) \leq C r^{n-1}.
                        \end{equation}
                        where $C \geq 1$ depends on $n$, $M$.

                    \item
                        For all $x \in K$, for all $0 < r \leq r_0$ such that $B(x,r) \subset X$,
                        \begin{equation}
                            \HH^{n-1}(K \cap B(x,r)) \geq C^{-1} r^{n-1}.
                        \end{equation}
                        where $C \geq 1$ depends on $n$, $M$.

                    \item
                        For all $x \in K$ and $0 < r < r_0$ such that $B(x,r) \subset X$, there is a closed, Ahlfors-regular, uniformly rectifiable set $E$ of dimension $(n-1)$ such that $K \cap \tfrac{1}{2} B(x,r) \subset E$.
                        The constants for the Ahfors-regularity and uniform rectifiability depends on $n$, $M$ and $a$.
                \end{enumerate}
            \end{thm}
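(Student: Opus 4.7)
The plan is to obtain all three items by invoking (and verifying the hypotheses of) the corresponding results in~\cite{David}, up to the bookkeeping adjustments sketched in Remark~\ref{rmk_david2}. Since the statement is explicitly an extract, my role is to describe how David's scheme proceeds and to flag what actually needs to be checked.

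For item (1), I would proceed exactly as in Remark~\ref{rmk_david}: given $B = B(x,r) \subset X$ with $r \leq r_0$, apply the quasiminimality inequality~(\ref{eq_quasi}) to the plug competitor $(v,L) = (u \mathbf{1}_{X \setminus \overline{B}}, (K \setminus \overline{B}) \cup \partial B)$. For this pair one has $K \setminus L = K \cap \overline{B}$, $L \setminus K \subset \partial B$, and $\Delta E \leq 0$ since the Dirichlet energy only decreases. The inequality then reads $\HH^{n-1}(K \cap \overline{B}) \leq (M n \omega_n + a) r^{n-1}$, which is the upper Ahlfors bound.

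Item (2) is more delicate and relies on the coral hypothesis $K = \spt(\HH^{n-1} \mres K)$. The scheme in~\cite{David} is a "removal" argument: if $\HH^{n-1}(K \cap B(x,r))$ were much smaller than $r^{n-1}$, one could erase $K$ inside a sub-ball and replace $u$ by its harmonic extension across the freed region. Provided $a$ is small enough, the length term gain would beat the Dirichlet energy loss, contradicting quasiminimality. This is genuinely where coralness (no "ghost" pieces of $K$) matters, and the argument requires a careful choice of competitor — typically via a projection of $K$ onto a nearby hyperplane — combined with Caccioppoli-type controls on $u$ in the vicinity.

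Item (3) is the core of David's theory and the main obstacle. After Ahlfors-regularity is established, one must verify the Weak Geometric Lemma, \emph{i.e.}\ a Carleson-type bound on the $\beta$-numbers of $K$ (this is precisely what Proposition~\ref{prop_porosity} exploits downstream). The strategy is a harmonic comparison: at any scale where $K$ is far from being contained in a hyperplane, harmonic replacement of $u$ yields a definite drop in $\int \abs{\nabla u}^2$, which by~(\ref{eq_quasi}) must be compensated by a corresponding drop in $\HH^{n-1}(K)$; summing the telescoping gains over all scales gives the Carleson bound. The set $E$ is then produced by a Lipschitz "capping off" of $K \cap \tfrac12 B(x,r)$ at scale $r$. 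The only adaptation required for our thermal insulation setting is that our minimizers are \emph{not} $\mathcal{P}$-quasiminimizers in the strict sense of Definition~\ref{defi_david_quasi} — the extra volume term $\LL^n(\{u>0\})$ and the doubled trace weight $(\overline{u}^2 + \underline{u}^2)$ shift constants — but the energy inequality derived in Proposition~\ref{prop_ur} has the same structure as~(\ref{eq_quasi}), with constants depending only on $n$ and $\delta$, so David's arguments carry over without conceptual change.
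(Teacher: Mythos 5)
The paper does not actually prove Theorem~\ref{thm_david}; it is presented explicitly as an \emph{extract} from David's book~\cite{David} (Definition~18.14, Sections~72 and~74), and the only internal argument supplied is the plug-competitor computation in Remark~\ref{rmk_david} for item~(1), together with Remark~\ref{rmk_david2} checking that David's proof of item~(3) survives the passage from the inequality~(\ref{eq_quasi}) to the weaker symmetric inequality~(\ref{eq_quasi2}). So a citation with the precise pointer is all that is expected here, and your high-level sketch is compatible with the paper's own treatment of items~(1) and~(2).

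Two caveats, however. First, your description of item~(3) as a telescoping harmonic-replacement argument producing a Carleson bound on $\beta$-numbers is not how David proceeds. In Section~74 he builds a single explicit competitor $(w,G)$ with $G = (K \setminus B) \cup Z$, where $Z$ is a carefully chosen subset of $\partial B$ containing $K \cap \partial B$, and invokes the quasi-minimality inequality exactly once (at line~(22) of Section~74) to obtain a big-pieces-of-Lipschitz-graphs conclusion; uniform rectifiability then follows via the standard equivalences. The Carleson estimate on $\beta$-numbers is a \emph{consequence} that the present paper later extracts in Proposition~\ref{prop_porosity} via the Weak Geometric Lemma~\cite[(73.13)]{David}; it is not the mechanism by which item~(3) is established.

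Second, your closing paragraph conflates two separate points. Theorem~\ref{thm_david} as stated concerns pairs satisfying David's asymmetric inequality~(\ref{eq_quasi}), whereas the thermal-insulation minimizers only satisfy the strictly weaker symmetric inequality~(\ref{eq_quasi2}) obtained in the proof of Proposition~\ref{prop_ur} (one cannot subtract $\HH^{n-1}(K \cap L \cap \overline{B})$ from both sides there). Observing that~(\ref{eq_quasi2}) still suffices is precisely the content of Remark~\ref{rmk_david2}, which inspects David's single use of~(\ref{eq_quasi}) and verifies that only the crude estimates $\HH^{n-1}(K \setminus G) \geq \HH^{n-1}(K \cap B)$ and $\HH^{n-1}(G \setminus K) \leq \HH^{n-1}(Z)$ are actually needed, and that $\HH^{n-1}(K \cap \overline{B})$ and $\HH^{n-1}(G \cap \overline{B})$ satisfy the same two bounds. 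That verification is the one genuinely nontrivial point of the appendix, and your sketch passes over it.
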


            \begin{rmk}\label{rmk_david2}
                One can observe that (\ref{eq_quasi}) implies
                \begin{equation}\label{eq_quasi2}
                    \HH^{n-1}(K \cap \overline{B}) \leq M \HH^{n-1}(L \cap \overline{B}) + \Delta E + a r^{n-1}.
                \end{equation}
                This is equivalent when $M = 1$ but strictly weaker when $M > 1$.

                We claim that Theorem~\ref{thm_david} still holds true with (\ref{eq_quasi2}) in place of (\ref{eq_quasi}).
                The first item is easy (see Remark~\ref{rmk_david}).
                The second item works as usual.
                The most critical point is the third item.
                In Section 74, David builds a suitable competitor $(w,G)$ of $(u,K)$ in a ball $B$.
                The set $G$ is of the form $G = (K \setminus B) \cup Z$ where $Z$ a special subset of $\partial B$ containing $K \cap \partial B$.
                The quasi-minimality condition (\ref{eq_quasi}) is used only once at line (22) of Section 74.
                Then David uses the inequalities
                \begin{align}
                    \HH^{n-1}(K \setminus G)    &\geq \HH^{n-1}(K \cap B)\\
                    \HH^{n-1}(G \setminus K)    &\leq \HH^{n-1}(Z)
                \end{align}
                but we also have anyway
                \begin{align}
                    \HH^{n-1}(K \cap \overline{B}) &\geq \HH^{n-1}(K \cap B)\\
                    \HH^{n-1}(G \cap \overline{B}) &\leq \HH^{n-1}(Z).
                \end{align}
            \end{rmk}
        \end{appendices}

        \section*{Acknowledgments}

        This work was co-funded by the European Regional Development Fund and the Republic of Cyprus through the Research and Innovation Foundation (Project: EXCELLENCE/1216/0025).

    \vspace{2em}

    \begin{tabular}{l}
        Camille Labourie \\ University of Cyprus \\ Department of Mathematics\\ \& Statistics \\ P.O. Box 20537\\
        Nicosia, CY- 1678 CYPRUS
        \\ {\small \tt labourie.camille@ucy.ac.cy}
    \end{tabular}
    \begin{tabular}{lr}
        Emmanouil Milakis\\ University of Cyprus \\ Department of Mathematics\\ \& Statistics \\ P.O. Box 20537\\
        Nicosia, CY- 1678 CYPRUS
        \\ {\small \tt emilakis@ucy.ac.cy}
    \end{tabular}
    \end{document}